\documentclass[12pt]{amsart}


\usepackage{fancyhdr}
\usepackage{enumitem} 
\newlist{condenum}{enumerate}{1}
\usepackage{xcolor} 
\usepackage{graphicx} 
\usepackage{hyperref} 

\usepackage{amsmath}
\allowdisplaybreaks
\usepackage{amsthm}
\usepackage{amssymb}
\usepackage{mathtools}
\usepackage{mathabx}
\usepackage{bbm}
\usepackage{shuffle}
\usepackage[mathscr]{eucal}
\usepackage{stmaryrd} 
\usepackage{makecell}

\usepackage{tikz}
\usetikzlibrary{patterns, arrows, matrix, positioning, calc}
\usepackage{tikz-cd}

\usepackage{xcolor}
\usepackage[colorinlistoftodos]{todonotes}

\setlength{\evensidemargin}{1in}
\addtolength{\evensidemargin}{-1in}
\setlength{\oddsidemargin}{1in}
\addtolength{\oddsidemargin}{-1in}
\setlength{\topmargin}{1in}
\addtolength{\topmargin}{-1.5in}
\setlength{\textwidth}{16.5cm}
\setlength{\textheight}{23cm}



\newtheorem{thm}[equation]{Theorem}

\newtheorem{prop}[equation]{Proposition}
\newtheorem{lem}[equation]{Lemma}
\newtheorem{cor}[equation]{Corollary}
\theoremstyle{definition}

\theoremstyle{remark}
\newtheorem{rem}[equation]{Remark}

\numberwithin{equation}{section}
%
\newenvironment{introthm}[1]{
  
  \thm
}{\endthm}
\newenvironment{introcor}[1]{
  
  \cor
}{\endcor}


\newcommand{\CC}{\mathbb{C}}

\newcommand{\QQ}{\mathbb{Q}}
\newcommand{\ZZ}{\mathbb{Z}}

\def\mydefb#1{\expandafter\def\csname #1#1#1\endcsname{\mathcal{#1}}}
\def\mydefallb#1{\ifx#1\mydefallb\else\mydefb#1\expandafter\mydefallb\fi}
\mydefallb ABCDEFGHIJKLMNOPQRSTUVWXYZ\mydefallb

\renewcommand{\epsilon}{\varepsilon}
\newcommand{\spanning}{\operatorname{-span}}

\renewcommand{\setminus}{-}



\newcommand{\TL}{\mathsf{TL}}

\newcommand{\Sym}{\mathrm{Sym}}
\newcommand{\QSym}{\mathrm{QSym}}
\newcommand{\QSV}{\mathrm{QSV}}
\newcommand{\NCP}{\mathrm{NCP}}

\newcommand{\EP}{E_{pos}}
\newcommand{\EV}{E_{val}}

\newcommand{\edge}[2]{\tikz[scale = 0.75, baseline = -0.2cm]{
\draw (0.1, 0) node[inner sep = 0ex] (a) {};
\node[left, xshift = 0.1cm, yshift = -0.1cm] at (a) {$\scriptstyle #1$};
\draw (0.9, 0)  node[inner sep = 0ex] (b) {};
\node[right, xshift = -0.1cm, yshift = -0.1cm] at (b) {$\scriptstyle #2$};
\draw[thick] (a) to[out = 35, in = 145] (b);}}

\pagestyle{fancy}
\lhead{}
\chead{}
\rhead{}
\cfoot{\thepage}

\title{The excedance quotient of the Bruhat order, Quasisymmetric Varieties and Temperley-Lieb algebras}
\author{ Nantel Bergeron and Lucas Gagnon}
\address{Dept. of Math. and Stat.\\ York  University\\ To\-ron\-to, Ontario M3J 1P3\\ CANADA}
\email{bergeron@yorku.ca, lgagnon@yorku.ca}
\keywords{Noncrossing partitions, Quasisymmetric polynomials, Varieties of points}
\thanks{This work is supported in part by York Research Chair and NSERC.
This paper originated in a working session at the Algebraic
Combinatorics Seminar at Fields Institute}
\date{}

\keywords{Quasisymmetric Polynomials, } 

\subjclass[2020]{05E05, 20C08, 33D80, 14L24}
\begin{document}

\maketitle
\begin{abstract} 
Let $R_n=\mathbb{Q}[x_1,x_2,\ldots,x_n]$ be the ring of polynomial in $n$ variables and consider the ideal $\langle \mathrm{QSym}_{n}^{+}\rangle\subseteq R_n$ generated by quasisymmetric polynomials without constant term. 
It was shown by J.~C.~Aval, F.~Bergeron and  N.~Bergeron that $\dim\big(R_n\big/\langle \mathrm{QSym}_{n}^{+} \rangle\big)=C_n$ the $n$th Catalan number. 
In the present work, we explain this phenomenon by defining a set of permutations $\mathrm{QSV}_{n}$ with the following properties: first, $\mathrm{QSV}_{n}$ is a basis of the Temperley--Lieb algebra $\mathsf{TL}_{n}(2)$, and second, when considering $\mathrm{QSV}_{n}$ as a collection of points in $\mathbb{Q}^{n}$, the top-degree homogeneous component of the vanishing ideal $\mathbf{I}(\mathrm{QSV}_{n})$ is $\langle \mathrm{QSym}_{n}^{+}\rangle$.

Our construction has a few byproducts which are independently noteworthy.  
We define an equivalence relation $\sim$ on the symmetric group $S_{n}$ using weak excedances and show that its equivalence classes are naturally indexed by noncrossing partitions.  
Each equivalence class is an interval in the Bruhat order between an element of $\mathrm{QSV}_{n}$ and a $321$-avoiding permutation.  
Furthermore, the Bruhat order induces a well-defined order on $S_{n}\big/\!\!\sim$.  
Finally, we show that any section of the quotient $S_{n}\big/\!\!\sim$ gives an (often novel) basis for $\mathsf{TL}_{n}(2)$.  

\end{abstract}

\section{Introduction}

Quasisymmetric functions originate in the work of Stanley~\cite{StanleyQSym}, where they appear as enumeration series for $P$-partitions.  
Later, Gessel~\cite{Gessel} gave a more algebraic treatment of the ring $\QSym$ spanned by all quasisymmetric functions, establishing a beautiful analogy with the classical ring of symmetric functions $\Sym$. 
The importance of $\QSym$ has continued to increase:~\cite{ABS} established $\QSym$ as a universal setting for enumerative combinatorial invariants, and in recent years quasisymmetric functions have been at the center of a number of research programs (many examples can be found in~\cite{Grinberg,LMvW, Mason} and references therein).

There is also a striking similarity between quasisymmetric functions and the invariant theory of finite reflections groups.  
Chevalley's theorem states that each finite reflection group $W$ acts naturally on a polynomial ring $R$, and the quotient of $R$ by the ideal $\langle R_{+}^{W} \rangle$ generated by positive degree invariants is isomorphic to the regular module of $W$; see~\cite[Chapter 3]{Humphrey}.  
Similarly, the quasisymmetric polynomials $\QSym_{n}$ in $R_{n} = \QQ[x_{1}, \ldots, x_{n}]$ are the invariants of an action of  Temperley--Lieb algebra $\TL_{n}(2)$ on $R_{n}$  introduced by Hivert~\cite{Hivert}.  
Writing $\langle \QSym_{n}^{+} \rangle$
for ideal generated by the positive degree quasisymmetric polynomials,~\cite{AB,ABB} show that the dimension of the coinvariant space $R_{n}\big/\langle \QSym_{n}^{+} \rangle$ and $\TL_{n}(2)$ agree: both are the $n$th Catalan number $C_{n}$.  
Since $\TL_{n}(2)$ shares many nice properties with reflection groups, one might expect a Chevalley-type theorem from this coincidence, but there is no obvious $\TL_{n}(2)$-action on $R_{n}\big/\langle \QSym_{n}^{+} \rangle$:  Hivert's action is not multiplicative and $\langle \QSym_{n}^{+} \rangle$ is not a $\TL_{n}(2)$-submodule.  

Motivated by the discussion above, we revisit two modules which afford the left regular representation of the symmetric group $S_{n}$:
\begin{enumerate}[itemsep = 1ex]
\item the quotient $R_{n}\big/\langle \Sym_{n}^{+} \rangle$ of the polynomial ring $R_{n} = \QQ[x_{1}, \ldots, x_{n}]$ by the ideal generated by positive-degree symmetric polynomials $\Sym_{n}^{+}$, and

\item the coordinate ring $R_{n} / \mathbf{I}(S_{n})$ for the vertices of the regular permutohedron in $\QQ^{n}$, which are the points $(\sigma_{1}, \ldots, \sigma_{n})$ for each $\sigma \in S_{n}$.

\end{enumerate}
Module (1) is a famous case of Chevalley's theorem: the $S_{n}$-invariants of $R_{n}$ are the symmetric polynomials, and $R_{n}\big/\langle \Sym_{n}^{+} \rangle$ is the $S_{n}$ coinvariant ring.  
On the other hand, module (2) comes from the left multiplicative action of $S_{n}$ on the permutahedron realized on the coordinate ring $R_{n} / \mathbf{I}(S_{n})$  where $\mathbf{I}(S_{n})$ is the vanishing ideal. 
However, as seen in the work of Garsia and Procesi~\cite{GP} and reference therein, a careful inspection reveals that these modules determine one another!  
Consider the ideal
\[
I_{n} = \langle f(x_{1}, \ldots, x_{n}) - f(1, \ldots, n) \;|\; f \in \Sym_{n}^{+} \rangle \subseteq \mathbf{I}(S_{n}).
\]
For each $f \in R_{n}$, let $\mathsf{h}(f)$ denote the top-degree homogeneous component of $f$, and for any ideal $I$ in $R_{n}$ write $\mathsf{gr}(I) = \langle \mathsf{h}(f) \;|\; f \in I \rangle$.  
Then $\mathsf{gr}(I_{n}) \supseteq \langle \Sym_{n}^{+} \rangle$, and Gr\"{o}bner basis theory gives a linear isomorphism $R_{n}\big/\mathsf{gr}(I_{n})  \cong R_{n}\big/I_{n}$; see Section~\ref{sec:Grobner}.  We therefore have
\[
|S_{n}| = \dim\big(R_{n}\big/\langle \Sym_{n}^{+} \rangle\big) \ge \dim\big(R_{n}\big/\mathsf{gr}(I_{n})\big) = \dim\big(R_{n}\big/I_{n}) \ge \dim(R_{n}\big/\mathbf{I}(S_{n})\big) = |S_{n}|,
\]
so that $I_{n} = \mathbf{I}(S_{n})$ and $\mathsf{gr}(I_{n}) = \langle \Sym_{n}^{+} \rangle$, and $R_{n}\big/\langle \Sym_{n}^{+} \rangle \cong R_{n}\big/\mathbf{I}(S_{n})$ as vector spaces.  
This isomorphism respects the $S_{n}$-action on each quotient: both $\mathbf{I}(S_{n})$ and $\langle \Sym_{n}^{+} \rangle$ are fixed spaces for the standard $S_{n}$-action on $R_{n}$, and this action coindices with the action on points for $R_{n}\big/\mathbf{I}(S_{n})$.  
Thus, we have an $S_{n}$-module isomorphism $R_{n}\big/\langle \Sym_{n}^{+} \rangle \cong R_{n}\big/\mathbf{I}(S_{n})$, though the left hand side has a natural grading and the right hand side does not.

In the present paper, we attempt to apply some of the ideas presented to quasisymmetric functions and Temperley--Lieb algebras. 
It is known  that $\langle \Sym_{n}^{+} \rangle \subseteq \langle \QSym_{n}^{+} \rangle$, and that there is a surjective algebra homomorphism $\phi: \CC S_{n} \to \TL_{n}(2)$. 
Guided by these relationships, we search for a subset $\QSV_{n} \subseteq S_{n} \subseteq \QQ^{n}$ which satisfies:
\begin{enumerate}[itemsep = 1ex, label=(\roman*)]
\item $|\QSV_{n}| = C_{n}$,

\item the image $\phi(\QSV_{n})$ is a basis of $\TL_{n}(2)$, and

\item considering the vanishing ideal $\mathbf{I}(\QSV_{n})$, we have $\mathsf{gr}\big(\mathbf{I}(\QSV_{n})\big) = \langle \QSym_{n}^{+} \rangle$.

\end{enumerate}
Assuming such a set exists, one can define an action of $\TL_{n}(2)$ on the space $R_{n}\big/\langle \QSym_{n}^{+} \rangle$ using Gr\"{o}bner basis theory and the multiplication constants for the basis obtained from $\QSV_{n}$.  
However, $\QSV_{n}$ is not readily found: it took several years of computer exploration to find a list of candidates for small values of $n$.  
We have now found it, along with a number of remarkable properties that should be of interest to the wider community.

The set $\QSV_{n} \subseteq S_{n}$ is defined in Section~\ref{sec:QSV}.  
After our initial discovery, we noticed that the cycle structures of these permutations determine a noncrossing partition, tying them to a more general story about noncrossing partitions and the symmetric group~\cite{Baine} (see also~\cite{McCammond}).  
For example, writing $Q_{\lambda}$ to denote the element of $\QSV_{n}$ indexed by the partition $\lambda$,
\[
\lambda = \begin{tikzpicture}[scale = 0.75, baseline = 0.75*-0.2]
\foreach \x in {1, ..., 7}{\draw[fill] (\x - 1, 0) node[inner sep = 2pt] (\x) {$\scriptstyle \x$};}
\foreach \i\j in {5/6, 3/5,2/7}{\draw[thick] (\i) to[out = 35, in = 145] (\j);}
\end{tikzpicture}
\qquad\text{corresponds to}\qquad
Q_{\lambda} = (1)(72)(653)(4).
\]
Through this connection,~\cite{GobetWilliams} and ~\cite{Zinno} have studied bases of general Temperley--Lieb algebras which specialize to $\phi(\QSV_{n})$ for $\TL_{n}(2)$, so only condition (iii) remains.

Our initial attempts to prove condition (ii) led us to an exciting discovery about how $\QSV_{n}$ sits in $S_{n}$.  
In Section~\ref{sec:excedance} we define an equivalence relation $\sim$ on $S_{n}$ using the weak excedance set of a permutation and its inverse.  
We call the equivalence classes of $S_{n}\big/\!\!\sim$ \emph{excedance classes}, and show that each noncrossing partition $\lambda$ bijectively determines an excedance class $\CCC_{\lambda}$.  
Surprisingly, the Bruhat order induces a well-defined quotient order on excedance classes.  
In the following, $\preceq$ denotes the order on noncrossing partitions which is dual to Young's lattice, described further in Section~\ref{sec:bruhatballot}.

\begin{introthm}{\ref{thm:excedancequotient}}
Writing $\le$ for the relation on excedance classes $S_{n}\big/\!\! \sim$ induced by the Bruhat order, $\CCC_{\lambda} \le \CCC_{\mu}$ if and only if $\lambda \preceq \mu$.
\end{introthm}

A similar result is given by~\cite{GobetWilliams} for the set $\QSV_{n}$ as a sub-poset of the Bruhat order (see Section~\ref{sec:bruhatballot}), which simplifies our proof of Theorem~\ref{thm:excedancequotient}.  This leads to a kind of duality between these sub- and quotient orders of the Bruhat poset.

\begin{introcor}{\ref{cor:interval}}
Each excedance class $\CCC_{\lambda}$ is an interval in the Bruhat order, with upper bound $Q_{\lambda}\in \QSV_n$ and lower bound given by a $321$-avoiding permutation
\end{introcor}

The combinatorics of excedance classes are very rich, and there is much left to explore.   

In Section~\ref{sec:TLbasis}, we use excedance classes of $S_{n}$ to explore bases of $\TL_{n}(2)$.  Using results of~\cite{GobetWilliams} and~\cite{Zinno}, our Theorem~\ref{thm:TLbasis} restates the fact that $\QSV_{n}$ satisfies condition (ii) above.  However, we also prove more general (and novel) theorem about bases of $\TL_{n}(2)$ coming from the surjection $\phi: \CC S_{n} \to \TL_{n}(2)$.

\begin{introthm}{\ref{thm:TLbases}}
Let $n \ge 0$ and for each noncrossing partition $\lambda$ of size $n$, fix an element $w_{\lambda} \in \CCC_{\lambda}$. Then the set $\{\phi(w_{\lambda}) \;|\; \text{noncrossing partitions $\lambda$}\}$ is a basis of $\TL_{n}(2)$.
\end{introthm}

Finally, in Section~\ref{sec:QSymvanish} we show that the set $\QSV_{n}$ satisfies condition (iii) above.  
The space of positive-degree quasisymmetric polynomials $\QSym_{n}$ has a homogeneous basis of monomial quasisymmetric functions $M_{\alpha}$ indexed by the compositions $\alpha \vDash d$ of a positive integer $d > 0$ with length $\ell(\alpha) \le n$.  
For each such composition $\alpha$, we construct a nonhomogeneous polynomial $P_{\alpha} \in R_{n}$ for which $\mathsf{h}(P_{\alpha}) = M_{\alpha}$ and show the following.

\begin{introthm}{\ref{thm:vanishingQSV}}
The ideal $\langle P_{\alpha} \;|\; \text{$\alpha \vDash d $ with $d> 0$ and $\ell(\alpha) \le n$} \rangle \subseteq R_n$ is the vanishing ideal $\mathbf{I}(\QSV_n)$ and 
\[
 \langle \QSym_{n}^{+} \rangle = \mathsf{gr}\big(\mathbf{I}(\QSV_{n})\big).
\]
\end{introthm}

From this, we obtain a linear isomorphism
\[
R_{n}\big/\mathbf{I}(\QSV_{n}) \cong R_{n}\big/\langle \QSym_{n}^{+} \rangle.
\]
Future work will explore the module structure on $R_{n}\big/\langle \QSym_{n}^{+} \rangle$ implicit in this result.

\subsection{Acknowledgement} 
We thank the participants of the Algebraic Combinatorics working seminar at the Fields Institute: K.~Chan, F.~G\'elinas, A.~Lao, A.~A.~Lazzeroni, C.~McConnell, Y.~Solomon, F.~Soltani,  N.~Wallace and 
M.~ Zabrocki.
We are also grateful to R.~Gonz\'alez~D'L\'eon for his  inspiring discussions.

\section{Preliminaries}

In this section we will recall some definitions and preliminary results about noncrossing partitions (Section~\ref{sec:ncp}) and partial orders on the symmetric group (Section~\ref{sec:bruhat}).

\subsection{Noncrossing partitions}
\label{sec:ncp}

Let $n$ be a nonnegative integer.  A \emph{noncrossing partition} of size $n$ is a diagram $\lambda$ consisting of:
\begin{enumerate}
\item the positive integers $1, \ldots, n$, placed from left to right along a horizontal axis; and

\item a set of left-to-right arcs $\edge{i}{j} = (i, j)$, $i < j$ drawn above the axis with no intersections or coterminal points: $\lambda$ contains no pair $\edge{i}{k}, \edge{j}{l}$ with $i \le j < k \le l$.

\end{enumerate}
For example,
\begin{equation}
\label{eq:noncrossingpartitionexample}
\lambda = \begin{tikzpicture}[scale = 0.75, baseline = 0.75*-0.2]
\foreach \x in {1, ..., 7}{\draw[fill] (\x - 1, 0) node[inner sep = 2pt] (\x) {$\scriptstyle \x$};}
\foreach \i\j in {5/6, 3/5,2/7}{\draw[thick] (\i) to[out = 35, in = 145] (\j);}
\end{tikzpicture}
\end{equation}
is a noncrossing partition of size $7$ containing three arcs: $\edge{2}{7}$, $\edge{3}{5}$, and $\edge{5}{6}$.

Considering a noncrossing partition $\lambda$ as an (undirected) graph, the connected components of $\lambda$ give a partition of the set $[n] = \{1, \ldots, n\}$, which is the origin of the term.  For example, the noncrossing partition shown in Equation~\eqref{eq:noncrossingpartitionexample} corresponds to the set partition $\big\{ \{1\}, \{2, 7\}, \{3, 5, 6\}, \{4\}  \big\}$.  Let
\[
\NCP_{n} = \{ \text{noncrossing partitions of size $n$} \}.
\]
The number of noncrossing partitions of size $n$ is the $n$th Catalan number, $C_{n} = \frac{1}{n+1}\binom{2n}{n}$~\cite[Exercise 6.19 pp]{Stanley}.

Given an arc $ \edge{i}{j} \in \lambda$, say that $i$ is the \emph{left endpoint} and $j$ is the \emph{right endpoint}, and let
\[
\lambda^{+} = \{ i \in [n] \;|\; \text{$i$ is a left endpoint in $\lambda$} \}
\]
and
\[
\lambda^{-} = \{ i \in [n] \;|\; \text{$i$ is a right endpoint in $\lambda$} \}.
\]
For example, with the noncrossing partition $\lambda$ in~\eqref{eq:noncrossingpartitionexample}, $\lambda^{+} = \{2, 3, 5\}$ and $\lambda^{-} = \{5, 6, 7\}$.  The arcs in $\lambda$ give a bijection between the sets $\lambda^{+}$ and $\lambda^{-}$, so that
\[
|\lambda^{+}| = |\lambda^{-}|.
\]

The following lemma is classic in the literature about non-crossing partitions (for example, see~\cite{Stanley}). 
As it plays an important role in our main construction, we have included a proof for the sake of exposition.
\begin{lem}
\label{lem:noncrossingpartitionproperty}
Each noncrossing partition $\lambda$ of size $n$ is uniquely determined by the sets $\lambda^{+}$ and $\lambda^{-}$.  Moreover, given two subsets $L$ and $R$ of $[n]$ with equal size, the inequalities
\[
\big|[k-1] \cap L\big| \ge \big|[k] \cap R\big| \qquad\text{for all $k \ge 1$}
\]
hold if and only if we have $L = \lambda^{+}$ and $R = \lambda^{-}$ for some noncrossing partition $\lambda$ of size $n$.
\end{lem}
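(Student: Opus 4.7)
The plan is to prove the lemma by exhibiting a deterministic algorithm that reconstructs a noncrossing partition from any candidate pair $(L,R)$ and showing that its success is equivalent to the stated inequalities. Concretely, I will use the classical stack-based bracket matching, treating elements of $L$ as opening brackets and elements of $R$ as closing brackets.

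\emph{Forward direction of the inequality.} Given a noncrossing partition $\lambda$ of size $n$, set $L=\lambda^{+}$ and $R=\lambda^{-}$. Each right endpoint $j\in [k]\cap\lambda^{-}$ belongs to a unique arc $\edge{i}{j}$ with $i\le j-1\le k-1$. The map $j\mapsto i$ is injective because the definition of a noncrossing partition forbids two arcs sharing a right endpoint. Hence $|[k]\cap\lambda^{-}|\le |[k-1]\cap\lambda^{+}|$ for every $k\ge 1$.

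\emph{Reverse direction and construction.} Given $L,R\subseteq[n]$ with $|L|=|R|$ satisfying the inequalities, scan $k=1,2,\ldots,n$ maintaining a stack $S$ of indices; when $k\in R$, pop the top of $S$ and record an arc $\edge{i}{k}$ where $i$ is the popped value; when $k\in L$, push $k$ onto $S$ (and in the case $k\in L\cap R$, pop first, then push). Just before processing $k$, the stack has size $|[k-1]\cap L|-|[k-1]\cap R|$, and if $k\in R$ the hypothesis gives
\[
|[k-1]\cap L|\ge |[k]\cap R|=|[k-1]\cap R|+1,
\]
so the pop is legal; after processing all $n$ steps the stack has size $|L|-|R|=0$, so every push is eventually matched. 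The resulting set of arcs $\lambda$ therefore has $\lambda^{+}=L$ and $\lambda^{-}=R$ by construction. To verify $\lambda$ is noncrossing, suppose for contradiction it contains arcs $\edge{i}{k}, \edge{j}{l}$ with $i\le j<k\le l$; at the moment $k$ is processed, $j\in L$ was pushed after $i$ and has not yet been popped (since $l>k$), so $j$ sits above $i$ on the stack and would be popped instead of $i$, contradicting the fact that the algorithm pairs $k$ with $i$. This handles the forbidden configurations where the two arcs cross or share an endpoint of a proscribed type.

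\emph{Uniqueness.} It remains to show that a noncrossing partition is determined by $(\lambda^{+},\lambda^{-})$, which I will argue by induction on $n$ (equivalently on $|\lambda^{+}|$). If $\lambda^{+}=\emptyset$ there is nothing to show; otherwise let $j$ be the smallest element of $\lambda^{-}$, and let $i$ be the largest element of $\lambda^{+}$ less than $j$ (which exists by the inequality at $k=j$). In any noncrossing partition with endpoint sets $\lambda^{+},\lambda^{-}$, the arc through $j$ must be $\edge{i}{j}$: its left endpoint is in $\lambda^{+}\cap[j-1]$ by the minimality of $j$ among right endpoints, and if it were some $i'<i$, then the noncrossing condition applied to the arc out of $i$ would force a crossing or a forbidden shared endpoint. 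Removing $\edge{i}{j}$ and reducing $\lambda^{\pm}$ accordingly yields a smaller instance to which induction applies, identifying $\lambda$ uniquely and completing the proof.

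The main obstacle I anticipate is the bookkeeping in the noncrossing verification and the uniqueness step, since one must carefully account for the case $\lambda^{+}\cap\lambda^{-}\ne\emptyset$ (where an index is simultaneously the right endpoint of one arc and the left endpoint of another); everything else is standard bracket matching.
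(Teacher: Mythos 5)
Your proof takes essentially the same bracket-matching approach as the paper's: the paper describes the construction as drawing half-arcs and "recursively connecting pairs of half-edges which have no incomplete edges between them," which is exactly your stack algorithm, and both verify the noncrossing property by observing that a crossing would require pairing across an unfinished arc. You are more explicit about the stack invariant and you supply a standalone inductive uniqueness argument where the paper simply asserts uniqueness, which is a reasonable addition. One small slip in the forward direction: you justify the injectivity of $j\mapsto i$ by "the definition of a noncrossing partition forbids two arcs sharing a right endpoint," but that fact only makes the map well-defined; injectivity instead follows because the noncrossing condition (taking $i=j$ in the forbidden pattern $\edge{i}{k},\edge{j}{l}$ with $i\le j<k\le l$) also forbids two arcs sharing a left endpoint. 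Similarly, in the noncrossing verification your stack argument handles the case $j<k<l$ cleanly, but the degenerate cases $i=j$ and $k=l$ (shared endpoints) need the separate observation that each index is pushed and popped at most once; you gesture at this but it is worth stating explicitly.
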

\begin{proof}
Given $L$ and $R$, draw the elements of $[n]$ along the horizontal axis, increasing from left to right, and draw a half-arc starting at each vertex in $L$ and and half-arc ending at each vertex in $R$.  For example, with $n = 8$, $L = \{2, 4, 7\}$, and $R = \{5, 6, 8\}$, the resulting diagram is
\[
\begin{tikzpicture}[scale = 0.75, baseline = 0.75*-0.2]
\foreach \x in {1, ..., 8}{\draw[fill] (\x - 1, 0) circle (2pt) node[inner sep = 2pt] (\x) {};}
\foreach \x in {1, ..., 8}{\node[below] at (\x) {$\scriptstyle \x$};}
\foreach \i\j in {2/6, 4/5, 7/8}{
	\draw[thick, -latex] (\i) to[out = 35, in = 215] ($ (\i) + (0.4, 0.3) $);
	\draw[thick, -latex] ($ (\j) + (-0.4, 0.3) $) to[out = -35, in = 145] (\j);}
\end{tikzpicture}.
\]
These half-arcs determine a unique noncrossing partition, which can be obtained by recursively connecting pairs of half-edges which have no incomplete edges between them. 
This is essentially the same process as matching open and closed parenthesis, the only difference being that our half-edges sit in prescribed positions.  
The assumed condition on the sets $L$ and $R$ ensures that each ending half-arc in $R$ will have a starting half arc to match with.  
The noncrossing condition follows from our choice of connections: a crossing would imply some connection was made between half-edges with an incomplete edge between them.  
Continuing the preceding example, this algorithm successively gives the diagrams
\[
\begin{tikzpicture}[scale = 0.75, baseline = 0.75*-0.2]
\foreach \x in {1, ..., 8}{\draw[fill] (\x - 1, 0) circle (2pt) node[inner sep = 2pt] (\x) {};}
\foreach \x in {1, ..., 8}{\node[below] at (\x) {$\scriptstyle \x$};}
\foreach \i\j in {2/6}{
	\draw[thick, -latex] (\i) to[out = 35, in = 215] ($ (\i) + (0.4, 0.3) $);
	\draw[thick, -latex] ($ (\j) + (-0.4, 0.3) $) to[out = -35, in = 145] (\j);}
\foreach \i\j in {4/5, 7/8}{\draw[thick] (\i) to[out = 35, in = 145] (\j);}
\end{tikzpicture}
\qquad\text{and then}\qquad
\begin{tikzpicture}[scale = 0.75, baseline = 0.75*-0.2]
\foreach \x in {1, ..., 8}{\draw[fill] (\x - 1, 0) circle (2pt) node[inner sep = 2pt] (\x) {};}
\foreach \x in {1, ..., 8}{\node[below] at (\x) {$\scriptstyle \x$};}
\foreach \i\j in {2/6, 4/5, 7/8}{\draw[thick] (\i) to[out = 35, in = 145] (\j);}
\end{tikzpicture}.
\]

For the converse, if $L = \lambda^{+}$ and $R = \lambda^{-}$ for a nonnesting partition $\lambda$, the given inequalities must hold: the left endpoint of each arc must be strictly less than the right endpoint.
\end{proof}

\subsection{Permutations and the Bruhat order}
\label{sec:bruhat}

Let $S_n$ denote the group of permutations of $[n]$.  We represent elements of $S_{n}$ either by using the standard one- and two-line notations or as a product of cycles.  We also write $\ell$ for the length function, so that for $w \in S_{n}$, $\ell(w)$ is the number of inversions of $w$:
\[
\ell(w) = |\{(i, j) \;|\; \text{$1 \le i < j \le n$ and $w_{i} > w_{j}$}\}|.
\]

The \emph{Bruhat order} on $S_{n}$ is the partial order generated by the relation
\[
v < w \qquad\text{if and only if} \qquad \text{$wv^{-1}$ is a transposition $(i\,j)$ and $\ell(v) < \ell(w)$}.
\]
This order is ubiquitous in the study of $S_{n}$ and related objects (for examples, see the book~\cite{BjornerBrenti}).

It is difficult to study the Bruhat order using only the generating relations above, so we will use the so-called \emph{tableau criterion} to determine its other relations.  For $w \in S_{n}$ and $1 \le k \le n$, let $\TTT_{k}(w)$ be the increasing re-arrangement of the set $\{w_{1}, w_{2}, \ldots, w_{k}\}$, and let $\TTT(w)$ be the array with rows $\TTT_{n}(w)$, $\TTT_{n-1}(w)$, and on to $\TTT_{1}(w)$.  For example, $\TTT_{4}(52314) = 1\,2\,3\,5$, and
\[
\TTT(52314) = 
\begin{array}{ccccc} 
1 & 2 & 3 & 4 & 5 \\ 
1 & 2 & 3 & 5 \\ 
2 & 3 & 5 \\ 
2 & 5 \\ 
5
\end{array}
\qquad\text{and}\qquad
\TTT(41235) = \begin{array}{ccccc} 
1 & 2 & 3 & 4 & 5 \\ 
1 & 2 & 3 & 4 \\ 
1 & 2 & 4 \\ 
1 & 4 \\ 
4
\end{array}.
\]

\begin{prop}[{\cite[Theorem 2.6.3]{BjornerBrenti}}]
\label{TableauCriterion}
For $v, w \in S_{n}$, we have $v \le w$  if and only if each entry of $\TTT(v)$ is less than or equal to to corresponding entry of $\TTT(w)$.
\end{prop}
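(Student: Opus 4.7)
The plan is to prove Proposition~\ref{TableauCriterion} by treating the two implications separately. The forward direction (Bruhat $\Rightarrow$ tableau) follows from direct inspection of how a single Bruhat cover alters the array $\TTT$, while the reverse direction (tableau $\Rightarrow$ Bruhat) proceeds by induction on a suitable distance measure between $\TTT(v)$ and $\TTT(w)$, reducing it one step at a time by ascending in the Bruhat order.

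For the forward direction, since the Bruhat order is generated by its covering relations, it is enough to verify the tableau inequality under a single cover $v \lessdot w$. Such a cover is realized by swapping two values $v_{a} < v_{b}$ sitting in positions $a < b$ of $v$, subject to the condition that no intermediate position $a < r < b$ has $v_{a} < v_{r} < v_{b}$; this extra condition is precisely what forces $\ell(w) = \ell(v) + 1$. For each $k$, the prefix sets $\{v_{1}, \ldots, v_{k}\}$ and $\{w_{1}, \ldots, w_{k}\}$ agree unless $a \le k < b$, in which case they differ only in that the former contains $v_{a}$ whereas the latter contains the strictly larger $v_{b}$. Sorting both prefixes increasingly, $\TTT_{k}(v)$ is obtained from $\TTT_{k}(w)$ by lowering exactly one entry, establishing $\TTT(v) \le \TTT(w)$ componentwise. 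Chaining covers along a saturated chain realizing $v \le w$ completes the implication.

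For the reverse direction, introduce the deficit $D(v, w) = \sum_{k,r}\big(\TTT_{k}(w)_{r} - \TTT_{k}(v)_{r}\big)$, which is a nonnegative integer under the tableau hypothesis, and induct on $D(v, w)$. In the base case $D = 0$ each row of $\TTT(v)$ equals the corresponding row of $\TTT(w)$, so $\{v_{1}, \ldots, v_{k}\} = \{w_{1}, \ldots, w_{k}\}$ for every $k$; peeling off the new element row by row then forces $v_{k} = w_{k}$, hence $v = w$. For the inductive step with $D > 0$, the goal is to produce a Bruhat cover $v \lessdot v'$ such that $\TTT(v') \le \TTT(w)$ still holds componentwise; this strictly decreases the deficit, the inductive hypothesis gives $v' \le w$, and we conclude $v \le w$.

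The main obstacle lies in constructing the cover $v \lessdot v'$. The natural approach is to take the smallest row $k$ in which the tableau inequality is strict, use the leftmost coordinate of strict inequality as a witness, and from it identify a pair of positions of $v$ whose swap produces $v'$. Two requirements must be respected simultaneously: the swap must realize a Bruhat cover (the no-intermediate-value condition), and the resulting tableau must still lie weakly below $\TTT(w)$ in every row, even though swapping positions $a < b$ in $v$ perturbs every row $\TTT_{m}(v)$ with $a \le m < b$. I expect the cleanest resolution to combine the minimality of the chosen row with an extremal choice of the swapped positions, with a short case analysis showing that no row can overshoot $\TTT(w)$.
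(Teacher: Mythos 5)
The paper does not prove this proposition; it is cited as Theorem 2.6.3 of Bj\"orner--Brenti, so there is no internal proof to compare against. Your attempt therefore cannot be judged against ``the paper's own proof'' and must stand on its own.

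Your forward direction is essentially correct. Two small caveats. First, the cover condition (no intermediate $r$ with $v_a < v_r < v_b$) is not actually needed for this direction: any relation $w = (v_a\,v_b)\,v$ with $a < b$ and $v_a < v_b$ gives $\ell(v) < \ell(w)$, and the prefix-set comparison goes through identically, so one may as well induct along generating relations rather than covers. Second, the phrase ``$\TTT_{k}(v)$ is obtained from $\TTT_{k}(w)$ by lowering exactly one entry'' is imprecise: replacing $v_b$ by the smaller $v_a$ in a sorted $k$-set can shift a whole block of entries down by one, not just a single entry. The correct and easy statement is that if two equal-size sets $S$ and $T$ satisfy $S = (T\setminus\{t\})\cup\{s\}$ with $s < t$, then the $r$-th smallest element of $S$ is at most the $r$-th smallest element of $T$ for every $r$. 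With that fix, the forward implication is complete.

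The reverse direction is where the proof has a genuine gap, and you flag it yourself: ``I expect the cleanest resolution to combine the minimality of the chosen row with an extremal choice of the swapped positions, with a short case analysis showing that no row can overshoot $\TTT(w)$.'' That sentence is a plan, not a proof. The framework (induct on the nonnegative deficit $D(v,w)$, use the base case $D=0 \Rightarrow v = w$, reduce $D$ by finding a cover $v \lessdot v'$ with $\TTT(v') \le \TTT(w)$) is sound, and the claim that such a $v'$ strictly decreases $D$ follows from your forward direction. But the existence of such a $v'$ is precisely the substantive content of the tableau criterion, and nothing in the proposal establishes it. In particular, the ``natural approach'' of taking the smallest row $k$ and leftmost column $r$ with $\TTT_{k}(v)_{r} < \TTT_{k}(w)_{r}$ does not by itself identify a pair of positions $a < b$ to swap, and when such a pair is chosen, one must verify both that the swap is a Bruhat cover of $v$ and that every intermediate row $\TTT_{m}(v')$ for $a \le m < b$ still lies weakly below $\TTT_{m}(w)$; the second verification is the crux and requires an actual argument (it is not automatic from minimality of $k$). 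Until that construction is carried out, the reverse implication remains unproved.
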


For example, by considering the diagrams $\TTT(52314)$ and $\TTT(41235)$ shown above, the tableau criterion allows us to deduce that $52314 \le 41235$.

\section{The set $\QSV_{n}$}
\label{sec:QSV}

In this section we define the set $\QSV_{n} \subseteq S_n$ and establish its elementary properties.  Our treatment is essentially the standard on in the literature on noncrossing partitions and is originally due to~\cite{Baine}.  In Section~\ref{sec:bruhatballot}, we turn to the restriction of the Bruhat order to $\QSV_{n}$ and recall the combinatorial description of this order from~\cite{GobetWilliams}. 

Let $\lambda$ be a noncrossing partition of size $n$ and recall the sets $\lambda^{+}$ and $\lambda^{-}$ from Section~\ref{sec:ncp}.  
Define a permutation $Q_{\lambda} \in S_{n}$ by 
\[
Q_{\lambda}(j) = \begin{cases} i & \text{if $j \in \lambda^{-}$ and $ \edge{i}{j} \in \lambda$} \\
k & \text{if $j \notin \lambda^{-}$ and $k$ is the largest element connected to $i$ in $\lambda$}
\end{cases}
\]
Thus, $Q_{\lambda}$ sends each $j \in [n]$ to its leftward neighbor in $\lambda$, if such a neighbor exists, and otherwise sends $j$ to the rightmost element of its connected component.  

Let
\[
\QSV_{n} = \{Q_{\lambda} \;|\; \lambda \in \NCP_{n} \}.
\]
For example, the elements of $\QSV_{3}$ are:
\[
Q_{\begin{tikzpicture}[scale = 0.35, baseline = 0.35*-0.2]
\foreach \x in {1, ..., 3}{\draw[fill] (\x - 1, 0) circle (2pt) node[inner sep = 2pt] (\x) {};}
\foreach \x in {1, ..., 3}{\node[below] at (\x) {$\scriptstyle \x$};}
\foreach \i\j in {1/3}{\draw[thick] (\i) to[out = 35, in = 145] (\j);}
\end{tikzpicture}} = 321, \qquad
Q_{\begin{tikzpicture}[scale = 0.35, baseline = 0.35*-0.2]
\foreach \x in {1, ..., 3}{\draw[fill] (\x - 1, 0) circle (2pt) node[inner sep = 2pt] (\x) {};}
\foreach \x in {1, ..., 3}{\node[below] at (\x) {$\scriptstyle \x$};}
\foreach \i\j in {1/2, 2/3}{\draw[thick] (\i) to[out = 35, in = 145] (\j);}
\end{tikzpicture}} = 312, \qquad
Q_{\begin{tikzpicture}[scale = 0.35, baseline = 0.35*-0.2]
\foreach \x in {1, ..., 3}{\draw[fill] (\x - 1, 0) circle (2pt) node[inner sep = 2pt] (\x) {};}
\foreach \x in {1, ..., 3}{\node[below] at (\x) {$\scriptstyle \x$};}
\foreach \i\j in {1/2}{\draw[thick] (\i) to[out = 35, in = 145] (\j);}
\end{tikzpicture}} = 213, 
\]
\[
Q_{\begin{tikzpicture}[scale = 0.35, baseline = 0.35*-0.2]
\foreach \x in {1, ..., 3}{\draw[fill] (\x - 1, 0) circle (2pt) node[inner sep = 2pt] (\x) {};}
\foreach \x in {1, ..., 3}{\node[below] at (\x) {$\scriptstyle \x$};}
\foreach \i\j in {2/3}{\draw[thick] (\i) to[out = 35, in = 145] (\j);}
\end{tikzpicture}} = 132, \qquad\text{and}\qquad
Q_{\begin{tikzpicture}[scale = 0.35, baseline = 0.35*-0.2]
\foreach \x in {1, ..., 3}{\draw[fill] (\x - 1, 0) circle (2pt) node[inner sep = 2pt] (\x) {};}
\foreach \x in {1, ..., 3}{\node[below] at (\x) {$\scriptstyle \x$};}
\end{tikzpicture}} = 123.
\]

\begin{lem}
\label{lem:QSVcycles}

Let $\lambda$ be a noncrossing partition of size $n$ with connected components $C_{1}, \ldots, C_{s}$, and for $1 \le r \le s$ enumerate $C_{r}$ in increasing order as $\{c_{r, 1} < c_{r, 2} < \cdots < c_{r, |C_{r}|}\}$. 
We then have the disjoint cycle decomposition
\[
Q_{\lambda} = \prod_{r = 1}^{s} (c_{r, |C_{r}|} \cdots c_{r, 2} c_{r, 1} ).
\]
\end{lem}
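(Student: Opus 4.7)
The plan is to reduce the problem to a statement about a single connected component and then read off the cycle directly from the definition. Since the components $C_{1}, \ldots, C_{s}$ partition $[n]$ and $Q_{\lambda}$ preserves each component (both cases in the definition send $j$ to another element of the component of $j$), the disjoint cycle decomposition follows once I verify that $Q_{\lambda}$ restricts on each $C_{r}$ to the claimed cycle. Fix one component $C = \{c_{1} < c_{2} < \cdots < c_{m}\}$ and describe the arcs of $\lambda$ lying entirely in $C$.

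The structural claim I would prove is that these arcs are precisely $(c_{i}, c_{i+1})$ for $i = 1, \ldots, m-1$. The no-coterminal condition in the definition of a noncrossing partition implies that each vertex is the left endpoint of at most one arc and the right endpoint of at most one arc, so every vertex has degree at most $2$ in the graph $\lambda$. Since $c_{1}$ cannot be the right endpoint of any arc (such an arc would drag a smaller element into $C$), the vertex $c_{1}$ has degree exactly $1$: it is the left endpoint of some arc $(c_{1}, c_{j})$. To force $j = 2$, I argue by contradiction: if $j \ge 3$, then the path from $c_{2}$ to $c_{j}$ inside the component $C$ must eventually enter $c_{j}$ via some arc $(x, c_{j})$, but such an arc shares its right endpoint with $(c_{1}, c_{j})$, contradicting the no-coterminal condition. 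Hence $j = 2$, and removing $c_{1}$ together with the arc $(c_{1}, c_{2})$ yields a smaller noncrossing partition in which $\{c_{2} < \cdots < c_{m}\}$ is a component; strong induction on $m$ completes the structural claim. A slicker alternative is to apply Lemma~\ref{lem:noncrossingpartitionproperty}: the internal vertices of $C$ have degree $2$ and therefore belong to $\lambda^{+} \cap \lambda^{-}$, forcing $C \cap \lambda^{+} = \{c_{1}, \ldots, c_{m-1}\}$ and $C \cap \lambda^{-} = \{c_{2}, \ldots, c_{m}\}$, after which the uniqueness of the matching in that lemma identifies the arcs.

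Once the arc structure is in hand, computing $Q_{\lambda}$ on $C$ is direct from the definition: for $2 \le i \le m$ the element $c_{i} \in \lambda^{-}$ is the right endpoint of the arc $(c_{i-1}, c_{i})$, so $Q_{\lambda}(c_{i}) = c_{i-1}$, while $c_{1} \notin \lambda^{-}$ so $Q_{\lambda}(c_{1})$ is the maximum of $C$, namely $c_{m}$. These assemble into the single cycle $c_{m} \mapsto c_{m-1} \mapsto \cdots \mapsto c_{2} \mapsto c_{1} \mapsto c_{m}$, which matches $(c_{r, |C_{r}|}\, \cdots\, c_{r,2}\, c_{r,1})$ in cycle notation. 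The main obstacle is the structural claim about the arcs inside a component; everything else is a straightforward unwinding of the definitions.
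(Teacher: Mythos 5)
Your proof is correct and takes essentially the same approach as the paper's, which states---without spelling out the verification---the same structural fact you actually prove: within a connected component $C = \{c_1 < \cdots < c_m\}$ the arcs are exactly $(c_i, c_{i+1})$, so $c_1 \notin \lambda^-$ and $Q_\lambda(c_i) = c_{i-1}$ for $i > 1$. One small point worth tightening in your primary argument: the claim that the path from $c_2$ to $c_j$ ``must eventually enter $c_j$ via some arc $(x, c_j)$'' glosses over the possibility of arriving at $c_j$ from the right through an arc $(c_j, y)$ with $y > c_j$; ruling this out requires one more appeal to noncrossing (any arc on the path with one endpoint strictly between $c_1$ and $c_j$ and the other larger than $c_j$ would cross $(c_1, c_j)$, so the path stays inside the interval until it reaches $c_j$). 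Your ``slicker alternative'' via Lemma~\ref{lem:noncrossingpartitionproperty} sidesteps this, though it tacitly uses that the interior vertices of $C$ have degree exactly $2$ rather than at most $2$, which rests on $\lambda$ being acyclic as a graph (the minimum vertex of any cycle would be the left endpoint of two arcs).
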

\begin{proof}
The statement follows from the definition of $Q_{\lambda}$ above: $[n] \setminus \lambda^{-} = \{c_{r, 1} \;|\; 1 \le r \le s\}$, so that $Q_{\lambda}(c_{r, 1}) =  c_{r, |C_{r}|}$ for each $r$, and for $1 < i \le |C_{r}|$, we have $c_{r, i} = c_{r, i-1}$.
\end{proof}

For example, when $\lambda$ has a single connected component, $Q_{\lambda}$ is a single cycle: with
\[
\lambda = \begin{tikzpicture}[scale = 0.75, baseline = 0.75*-0.2]
\foreach \x in {1, ..., 7}{\draw[fill] (\x - 1, 0) node[inner sep = 2pt] (\x) {$\scriptstyle \x$};}
\foreach \i\j in {1/2, 2/3, 3/4, 4/5, 5/6, 6/7}{\draw[thick] (\i) to[out = 35, in = 145] (\j);}
\end{tikzpicture}
\qquad\text{we have}\qquad
Q_{\lambda} = (7654321) = 7123456.
\]
Considering the noncrossing partition shown in Equation~\eqref{eq:noncrossingpartitionexample} gives a more complicated example: with
\[
\lambda = \begin{tikzpicture}[scale = 0.75, baseline = 0.75*-0.2]
\foreach \x in {1, ..., 7}{\draw[fill] (\x - 1, 0) node[inner sep = 2pt] (\x) {$\scriptstyle \x$};}
\foreach \i\j in {5/6, 3/5,2/7}{\draw[thick] (\i) to[out = 35, in = 145] (\j);}
\end{tikzpicture}
\qquad\text{we have}\qquad
Q_{\lambda} = (1)(72)(653)(4) = 1764352.
\]

\begin{rem}
\label{rem:QSVnoncrossing}
Given any $n$-cycle $c \in S_{n}$,~\cite{Baine} gives a bijection between $\NCP_{n}$ and the interval between the identity and $c$ in the absolute order on $S_{n}$.  Our construction of $\QSV_{n}$ realizes this bijection for the $n$-cycle $c = (n\cdots 21)$.  
\end{rem}

\subsection{The Bruhat order on $\QSV_{n}$}
\label{sec:bruhatballot}

The Bruhat order on $S_{n}$ described in Section~\ref{sec:bruhat} restricts to a partial order on the set $\QSV_{n}$.
This order turns out to be very natural, as is described in the paper~\cite{GobetWilliams}, and in this section we recall the description for use in later sections.

Define a partial order $\preceq$ on the set $\NCP_{n}$ of noncrossing partitions as the extension of the covering relation: $\lambda$ is covered by $\mu$ if and only if $\lambda$ is obtained from $\mu$ in one of the following ways:
\begin{enumerate}
\item removing an arc of the form $ \edge{i}{i+1}$ from $\mu$, or

\item replacing any arc $ \edge{i}{k}$ in $\mu$ with two arcs $ \edge{i}{j}$ and $ \edge{j}{k}$ for some $i < j < k$ which do not intersect or share a left or right endpoint with any other arc in $\mu$.

\end{enumerate}

It is difficult to describe the non-covering relations of $\preceq$ on $\NCP_{n}$---and of the Bruhat order on $\QSV_{n}$---in a direct and intuitive manner.  Instead, these relations are best understood through an intermediary object.  A \emph{ballot sequence} of length $2n$ is a sequence $b \in \{\pm1\}^{2n}$ for which each partial sum of $b$ is nonnegative and the final sum is $0$.  A well-known bijection between noncrossing partitions is used in~\cite[Section~5.1]{GobetWilliams}: for $\lambda \in \NCP_{n}$ define a ballot sequence $b^{\lambda} = (b^{\lambda}_{1}, b^{\lambda}_{2}, \ldots, b^{\lambda}_{2n})$  by
\[
b^{\lambda}_{2 k - 1} = \begin{cases} 1 & \text{if $k \notin \lambda^{-}$} \\ -1 & \text{if $k \in \lambda^{-}$} \end{cases}
\qquad
\text{and}
\qquad
b^{\lambda}_{2 k} = \begin{cases} 1 & \text{if $k \in \lambda^{+}$} \\ -1 & \text{if $k \notin \lambda^{+}$} \end{cases}
\]
for each $1 \le k \le n$.

\begin{prop}[{\cite[Theorem 1.1 and Corollary 7.5]{GobetWilliams}}]
\label{prop:QSVorderbijection}
Let $\lambda$ and $\mu$ be noncrossing partitions of size $n$.  The following are equivalent:
\begin{enumerate}[itemsep =1ex]
\item $\lambda \preceq \mu$, 

\item $Q_{\lambda} \le Q_{\mu}$ in the Bruhat order, and

\item for all $1 \le k \le 2n$, $\sum_{i = 1}^{k} b^{\lambda}_{k} \le \sum_{i = 1}^{k} b^{\mu}_{k}$.

\end{enumerate}
\end{prop}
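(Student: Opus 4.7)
My plan is to prove the three-way equivalence cyclically, via (1) $\Rightarrow$ (2) $\Rightarrow$ (3) $\Rightarrow$ (1). For (1) $\Rightarrow$ (2), it suffices to check both covering relations defining $\preceq$. Using Lemma \ref{lem:QSVcycles}, removing an arc $\edge{i}{i+1}$ from $\mu$ corresponds to $Q_\lambda = Q_\mu \cdot (c_1,\, i+1)$, where $c_1$ is the minimum of the component of $i$ in $\mu$; and splitting $\edge{i}{k}$ at $j$ gives $Q_\lambda = Q_\mu \cdot (j,\, k)$. In both cases the relevant values of $Q_\mu$ show the right multiplication removes an inversion, so $Q_\lambda < Q_\mu$ in the Bruhat order.

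For (2) $\Rightarrow$ (3), I apply the tableau criterion (Proposition \ref{TableauCriterion}). Writing $A_k(\lambda)$ for the set of arcs in $\lambda$ crossing the gap between positions $k$ and $k+1$, a direct computation from $\lambda^+,\lambda^-$ gives
\[
\textstyle \sum_{i=1}^{2k} b_i^\lambda \;=\; 2|A_k(\lambda)| \;=\; 2\bigl|\{\,i \le k : Q_\lambda(i) > k\,\}\bigr|,
\]
with an analogous identity relating $\sum_{i=1}^{2k-1} b_i^\lambda$ to $|\{i \le k : Q_\lambda(i) \ge k\}|$. The ballot partial-sum inequalities in (3) then follow from the tableau criterion applied at positions $(k,k)$ and $(k,k-1)$.

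For (3) $\Rightarrow$ (1), I induct on the total gap $\sum_i \bigl(S_i^\mu - S_i^\lambda\bigr)$ between partial sums (writing $S_i^\nu = \sum_{j \le i} b_j^\nu$). If this is positive, let $p$ be the first position of disagreement; then $b_p^\mu = 1$, $b_p^\lambda = -1$, and $S_p^\mu - S_p^\lambda = 2$. Take $p'$ as the smallest index $> p$ with $b_{p'}^\mu = -1$; swapping $b_{p'-1}^\mu,\, b_{p'}^\mu$ produces a ballot sequence $b^{\mu'}$ still dominated by $b^\lambda$ and with total gap reduced by exactly $2$. By Lemma \ref{lem:noncrossingpartitionproperty}, $b^{\mu'}$ corresponds to a noncrossing partition $\mu'$, and the parity of $p'-1$ determines the covering relation exhibiting $\mu' \lessdot \mu$: if $p'-1 = 2k$ is even, the swap corresponds to removing an arc $\edge{k}{k+1}$ from $\mu$ (which must exist, since $k \in \mu^+$ and $k+1 \in \mu^-$ force it by noncrossing); if $p'-1 = 2k-1$ is odd, it corresponds to splitting an arc of $\mu$ whose interior contains the singleton $k$. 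Induction then yields $\lambda \preceq \mu' \lessdot \mu$.

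The main obstacle is the last step: verifying that the chosen swap is valid (the resulting sequence is still a ballot sequence dominated by $b^\lambda$) and corresponds to an actual covering relation in $\preceq$. In the odd-position case this depends on a parity argument---$S_{2v-1}^\mu$ is always odd, so the inequality $S_{2v-1}^\mu \ge 2$ coming from the choice of $p'$ automatically improves to $S_{2v-1}^\mu \ge 3$, which guarantees that at least one arc of $\mu$ crosses $v$ and is thus available to be split.
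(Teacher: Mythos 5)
This proposition is stated in the paper as a citation of Gobet and Williams and is given no proof, so your self-contained argument is by definition a different route from the paper. Comparing instead with the techniques the paper does develop around it: your translation of $\sum_{i\le 2k} b^\lambda_i$ and $\sum_{i\le 2k-1}b^\lambda_i$ into arc counts is exactly the content of Corollary~\ref{cor:bruhatncphelper}, and your tableau-criterion step is the same mechanism the paper uses in Lemma~\ref{lem:tableautobruhat} and Proposition~\ref{prop:Qdominates}. So the implication $(2)\Rightarrow(3)$ in your argument is essentially what the paper proves for $w=Q_\lambda$ and $w=Q_\mu$; the directions $(1)\Rightarrow(2)$ and $(3)\Rightarrow(1)$ are genuinely new material relative to the paper's text.

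The argument looks correct as sketched. For $(1)\Rightarrow(2)$, the transposition formulas check out against Lemma~\ref{lem:QSVcycles}: for arc removal, writing the component of $i$ in $\mu$ as $c_1<\cdots<c_m$ with $i=c_r$, one computes $Q_\lambda=Q_\mu\,(c_1\,c_{r+1})$ with $Q_\mu(c_1)=c_m>c_r=Q_\mu(c_{r+1})$ and $c_1<c_{r+1}$; for the split, $Q_\lambda=Q_\mu\,(j\,k)$ with $Q_\mu(j)=j>i=Q_\mu(k)$ and $j<k$, so both right multiplications decrease length. For $(2)\Rightarrow(3)$, the identities $\sum_{i\le 2k}b_i^\lambda=2\big|\{i\le k: Q_\lambda(i)>k\}\big|$ and $\sum_{i\le 2k-1}b_i^\lambda=2\big|\{i\le k: Q_\lambda(i)\ge k\}\big|-1$ are correct, and monotonicity of these cardinalities under the tableau criterion is exactly right. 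For $(3)\Rightarrow(1)$, your swap construction does preserve the ballot condition and domination of $b^\lambda$: at the first disagreement $p$ one has $S_p^\mu-S_p^\lambda=2$, and since $b^\mu_q=1$ for $p\le q\le p'-1$ the gap $S_q^\mu-S_q^\lambda$ is nondecreasing on $[p,p'-1]$, so $S_{p'-1}^\mu\ge S_{p'-1}^\lambda+2$. The parity observation $S_{2k-1}^\mu\ge 3$ is the right way to guarantee an arc of $\mu$ strictly covers the singleton $k$, and together with noncrossing uniqueness this identifies the innermost covering arc as the one to split. One point you leave implicit but should state: in the even case, the fact that $k\in\mu^+$ and $k+1\in\mu^-$ forces $\edge{k}{k+1}\in\mu$ requires the ``no coterminal endpoints'' clause (otherwise $\edge{k}{j}$ with $j>k+1$ and $\edge{i}{k+1}$ with $i<k$ would cross, and $i=k$ or $j=k+1$ separately is ruled out by the coterminal restriction); and in the odd case one should verify that the split arc in $\mu'$ does not cross or share endpoints with other arcs of $\mu$, which follows because the stack-based (innermost) matching chooses the arc of $\mu$ covering $k$ with no other arc endpoints between its endpoints and $k$. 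With these details filled in, the argument is complete and correct.
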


For $n = 3$ the (isomorphic) orders on $\QSV_{n}$, $\NCP_{n}$, and ballot sequences of length $2n$ are shown in Figure~\ref{fig:Hassediagrams}.

\begin{figure}
\begin{center}
\begin{tikzpicture}
\node at (0, 4.5) (13b2) {$321$};
\node at (0, 3) (123) {$312$};
\node at (1.5, 1.5) (1b23) {$132$};
\node at (-1.5, 1.5) (12b3) {$213$};
\node at (0, 0) (1b2b3) {$123$};
\draw[thick] (1b2b3) -- (1b23);
\draw[thick] (1b2b3) -- (12b3);
\draw[thick] (1b23) -- (123);
\draw[thick] (12b3) -- (123);
\draw[thick] (123) -- (13b2);
\end{tikzpicture}
\hspace{0.75cm}
\begin{tikzpicture}
\node at (0, 4.5) (13b2) {\tikz[scale = 0.75, baseline = 0.75*-0.2]{
\foreach \x in {1, ..., 3}{\draw[fill] (\x - 1, 0) node[inner sep = 2pt] (\x) {$\scriptstyle \x$};}
\foreach \i\j in {1/3}{\draw[thick] (\i) to[out = 35, in = 145] (\j);}}};
\node at (0, 3) (123) {\tikz[scale = 0.75, baseline = 0.75*-0.2]{
\foreach \x in {1, ..., 3}{\draw[fill] (\x - 1, 0) node[inner sep = 2pt] (\x) {$\scriptstyle \x$};}
\foreach \i\j in {1/2, 2/3}{\draw[thick] (\i) to[out = 35, in = 145] (\j);}}};
\node at (1.5, 1.5) (1b23) {\tikz[scale = 0.75, baseline = 0.75*-0.2]{
\foreach \x in {1, ..., 3}{\draw[fill] (\x - 1, 0) node[inner sep = 2pt] (\x) {$\scriptstyle \x$};}
\foreach \i\j in {2/3}{\draw[thick] (\i) to[out = 35, in = 145] (\j);}}};
\node at (-1.5, 1.5) (12b3) {\tikz[scale = 0.75, baseline = 0.75*-0.2]{
\foreach \x in {1, ..., 3}{\draw[fill] (\x - 1, 0) node[inner sep = 2pt] (\x) {$\scriptstyle \x$};}
\foreach \i\j in {1/2}{\draw[thick] (\i) to[out = 35, in = 145] (\j);}}};
\node at (0, 0) (1b2b3) {\tikz[scale = 0.75, baseline = 0.75*-0.2]{
\foreach \x in {1, ..., 3}{\draw[fill] (\x - 1, 0) node[inner sep = 2pt] (\x) {$\scriptstyle \x$};}
\foreach \i\j in {}{\draw[thick] (\i) to[out = 35, in = 145] (\j);}}};
\draw[thick] (1b2b3) -- (1b23);
\draw[thick] (1b2b3) -- (12b3);
\draw[thick] (1b23) -- (123);
\draw[thick] (12b3) -- (123);
\draw[thick] (123) -- (13b2);
\end{tikzpicture}
\hspace{0.75cm}
\begin{tikzpicture}
\node at (0, 4.5) (13b2) {$111---$};
\node at (0, 3) (123) {$11-1--$};
\node at (1.5, 1.5) (1b23) {$1-11--$};
\node at (-1.5, 1.5) (12b3) {$11--1-$};
\node at (0, 0) (1b2b3) {$1-1-1-$};
\draw[thick] (1b2b3) -- (1b23);
\draw[thick] (1b2b3) -- (12b3);
\draw[thick] (1b23) -- (123);
\draw[thick] (12b3) -- (123);
\draw[thick] (123) -- (13b2);
\end{tikzpicture}
\end{center}
\caption{The Hasse diagrams of: $\QSV_{3}$ with the Bruhat order; $\NCP_{3}$ with $\preceq$; and ballot sequences (for which each $-1$ is represented as $-$) with the term-wise order on partial sums.}
\label{fig:Hassediagrams}
\end{figure}
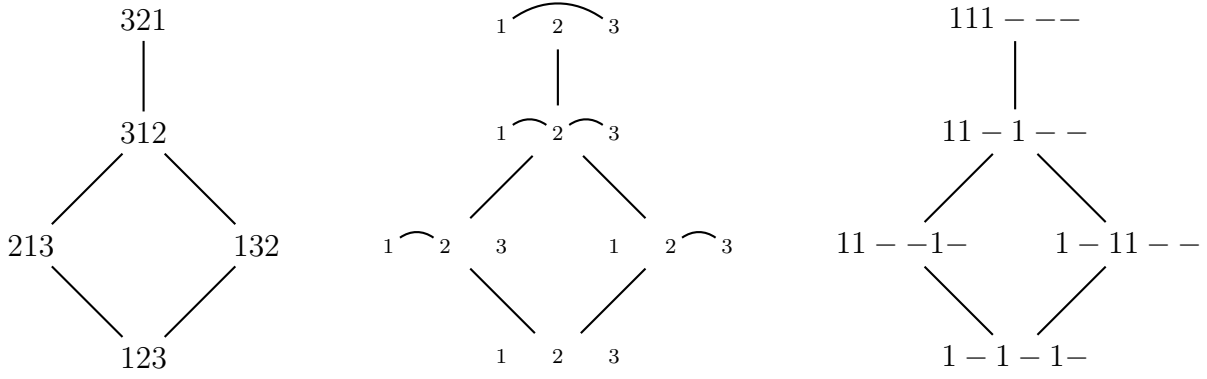

The final result of the section is an easy consequence of the results of~\cite{GobetWilliams}, but is not stated explicitly in this source.  For the sake of completeness, a proof is included.

\begin{cor}
\label{cor:bruhatncphelper}
Let $\lambda$ and $\mu$ be noncrossing partitions of size $n$.  Then $\lambda \preceq \mu$ if and only if 
\[
\big|\lambda^{+} \cap [k-1]\big| - \big|\lambda^{-} \cap [k]\big| \ \le\  \big|\mu^{+} \cap [k-1]\big| - \big|\mu^{-} \cap [k]\big|
\]
and
\[
\big|\lambda^{+} \cap [k]\big| - \big|\lambda^{-} \cap [k]\big|\  \le \  \big|\mu^{+} \cap [k]\big| - \big|\mu^{-} \cap [k]\big| 
\]
for all $1 \le k \le n$.
\end{cor}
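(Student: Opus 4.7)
The plan is to deduce this directly from the equivalence (1)$\Leftrightarrow$(3) in Proposition~\ref{prop:QSVorderbijection}, which characterizes $\lambda \preceq \mu$ by the termwise inequality $\sum_{i=1}^{m} b^{\lambda}_{i} \le \sum_{i=1}^{m} b^{\mu}_{i}$ for all $1 \le m \le 2n$. The strategy is to rewrite each partial sum in closed form using the counting functions $|\lambda^{+} \cap [k]|$ and $|\lambda^{-} \cap [k]|$, split according to the parity of $m$, and then match the $m = 2k-1$ case to the first displayed inequality in the corollary and the $m = 2k$ case to the second.

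First I would evaluate the entries in pair form. Directly from the definitions, $b^{\lambda}_{2j-1} = 1 - 2\,\mathbbm{1}[j \in \lambda^{-}]$ and $b^{\lambda}_{2j} = -1 + 2\,\mathbbm{1}[j \in \lambda^{+}]$, so
\[
b^{\lambda}_{2j-1} + b^{\lambda}_{2j} \ =\  2\bigl(\mathbbm{1}[j \in \lambda^{+}] - \mathbbm{1}[j \in \lambda^{-}]\bigr).
\]
Summing over $1 \le j \le k$ yields the clean telescoping formula
\[
\sum_{i=1}^{2k} b^{\lambda}_{i} \ =\  2\bigl(|\lambda^{+} \cap [k]| - |\lambda^{-} \cap [k]|\bigr),
\]
and subtracting the single term $b^{\lambda}_{2k} = -1 + 2\,\mathbbm{1}[k \in \lambda^{+}]$ (and using $|\lambda^{+} \cap [k]| - \mathbbm{1}[k \in \lambda^{+}] = |\lambda^{+} \cap [k-1]|$) gives
\[
\sum_{i=1}^{2k-1} b^{\lambda}_{i} \ =\  2\bigl(|\lambda^{+} \cap [k-1]| - |\lambda^{-} \cap [k]|\bigr) + 1.
\]

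Then I would substitute these expressions into Proposition~\ref{prop:QSVorderbijection}(3). The even-index inequality $\sum_{i=1}^{2k} b^{\lambda}_{i} \le \sum_{i=1}^{2k} b^{\mu}_{i}$ becomes, after dividing by $2$, exactly the second displayed inequality of the corollary; the odd-index inequality $\sum_{i=1}^{2k-1} b^{\lambda}_{i} \le \sum_{i=1}^{2k-1} b^{\mu}_{i}$ becomes, after cancelling the common $+1$ and dividing by $2$, exactly the first displayed inequality. Since $k$ ranges over $1, \ldots, n$ in each case, together these account for all $2n$ partial-sum inequalities, which completes both directions of the equivalence.

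There is no substantive obstacle here: the entire argument is bookkeeping on top of Proposition~\ref{prop:QSVorderbijection}. The only point requiring care is the off-by-one shift between $[k-1]$ and $[k]$ in the odd-index partial sum, reflecting the fact that $b^{\lambda}_{2k-1}$ records information about position $k$ on the "right endpoint" side before position $k$ on the "left endpoint" side has been scanned.
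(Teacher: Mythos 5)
Your proof is correct and follows essentially the same route as the paper: both establish the closed-form identities $\sum_{i=1}^{2k-1} b^{\lambda}_{i} = 1 + 2|\lambda^{+} \cap [k-1]| - 2|\lambda^{-} \cap [k]|$ and $\sum_{i=1}^{2k} b^{\lambda}_{i} = 2|\lambda^{+} \cap [k]| - 2|\lambda^{-} \cap [k]|$ and then invoke Proposition~\ref{prop:QSVorderbijection}(3). The only cosmetic difference is that you derive these formulas by direct telescoping of consecutive pairs while the paper verifies them by induction on $k$.
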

\begin{proof}
By Proposition~\ref{prop:QSVorderbijection}, it is sufficient to show that for all $1 \le k \le n$,
\[
\sum_{i = 1}^{2k - 1} b^{\lambda}_{i}  \ = \ 1 + 2\big|\lambda^{+} \cap [k-1]\big| - 2\big|\lambda^{-} \cap [k]\big|
\qquad\text{and}\qquad
\sum_{i = 1}^{2k} b^{\lambda}_{i} \  = \ 2\big|\lambda^{+} \cap [k]\big| - 2\big|\lambda^{-} \cap [k]\big|.
\] 
This will be established inductively: for $k = 1$ the equations can be verified directly, and for $k > 1$, we consider the differences between the $k-1$st expression and the $k$th: 
\[
\Big(1 + 2\big|\lambda^{+} \cap [k-1]\big| - 2\big|\lambda^{-} \cap [k]\big|\Big) - \Big(2\big|\lambda^{+} \cap [k-1]\big| + 2\big|\lambda^{-} \cap [k-1]\big|\Big)
= 1 - 2\big|\lambda^{-} \cap \{k\}\big|,
\]
which is $b^{\lambda}_{2k-1}$, and 
\[
\Big(2\big|\lambda^{+} \cap [k]\big| - 2\big|\lambda^{-} \cap [k]\big|\Big) - \Big(1 + 2\big|\lambda^{+} \cap [k-1]\big| + 2\big|\lambda^{-} \cap [k]\big|\Big) = 2\big|\lambda^{+} \cap \{k\}\big| - 1,
\]
which is $b^{\lambda}_{2k}$.
\end{proof}

\begin{rem}\label{rem:auto}
We make several additional remarks about Proposition~\ref{prop:QSVorderbijection} below.
\begin{enumerate}
\item The results of~\cite{GobetWilliams} describe the Bruhat order of the set $\{\omega_0w\omega_0^{-1}\; |\; w \in \QSV_{n}\}$ rather than $\QSV_{n}$, where $\omega_0\in S_n$ denotes the Bruhat-maximal element $\omega_0\in S_n$. 
In the terminology of Remark~\ref{rem:QSVnoncrossing}, these are the non-crossing partitions associated with the cycle $(12\ldots n)$ instead of $(n\ldots 21)$, but conjugation by $\omega_{0}$ is an automorphism of the Bruhat order so the results are equivalent.  Many of our results have counterparts under this bijection, and we revisit this equivalence throughout the paper.


\item The results of~\cite{GobetWilliams} use Dyck paths rather than ballot sequences for item 3.~in Propostition~\ref{prop:QSVorderbijection}.  
One can translate between the two by interchanging each $1$ in a ballot sequence with an up step in a Dyck path, and likewise each $-1$ with a down step.

\item Another realization of the order in item 3.~of Propostition~\ref{prop:QSVorderbijection} can be found in the dual of the interval between the empty diagram and the staircase in Young's lattice.  This follows from the straightforward order isomorphism between Dyck paths and the aforementioned dual interval~\cite[Solution 6.19 vv]{Stanley}.

\end{enumerate}
\end{rem}

\section{The excedance quotient of the Bruhat order}
\label{sec:excedance}

In this section we describe a novel equivalence relation $\sim$ on $S_{n}$ and show that it induces a quotient of the Bruhat order.  
This equivalence relation is defined in a simple way using the weak excedances of a permutation.  
We have discovered a number of nice properties of the equivalence classes in $S_{n}\big/\!\!\sim$, which we summarize after our initial definition.

%
%
%

Given a permutation $w \in S_{n}$, a \emph{weak excedance} of $w$ is a pair $(i, w_{i})$ for which $i \le w_{i}$.  Disaggregating, we define the \emph{excedance values} $\EV(w)$ and \emph{excedance positions} $\EP(w)$ to be the sets
\begin{align*}
\EV(w) &= \{ w_{i} \;|\; \text{$(i, w_{i})$ is a weak excedance of $w$}\},\;\text{and} \\[0.5em]
\EP(w) &= \{ i  \;|\; \text{$(i, w_{i})$ is a weak excedance of $w$}\}.
\end{align*}
Weak excedances and the sets $\EV(w)$ and $\EP(w)$ are most easily seen using two-line notation for permutations.  For example, marking the non-excedances of a permutation in red,
\[
w = \overset{1}{3} \overset{2}{5} {\color{red} \overset{3}{1}} \overset{4}{4} {\color{red} \overset{5}{2}} \overset{6}{6} {\color{red} \overset{7}{5}} \overset{8}{8},
\qquad
\EP(w) = \{1, 2, 4, 6, 8\},\qquad\text{and}\qquad
\EV(w) = \{3, 4, 5, 6, 8\}.
\]
We define the \emph{excedance relation} $\sim$ on $S_{n}$ by:
\begin{equation}
\label{eq:excednacerel}
v \sim w \qquad\text{if and only if} \qquad \text{$\EV(v) = \EV(w)$ and $\EP(v) = \EP(w)$},
\end{equation}
and say that each equivalence class of $S_{n}\big/\!\!\sim$ is an \emph{excedance class}.

We now summarize the main results of the section.  
Each noncrossing partition $\lambda$ of size $n$ determines an excedance class: recall the sets $\lambda^{+}$ and $\lambda^{-}$ defined in Section~\ref{sec:ncp}, and let
\[
\CCC_{\lambda} = \{ w \in S_{n} \;|\;  \text{$\EV(w) = [n] \setminus \lambda^{+}$ and $\EP(w) = [n] \setminus \lambda^{-}$}  \}.
\]
In Section~\ref{sec:ex1}, we show that this construction is bijective, so that the excedance classes are counted by the Catalan numbers.  For example, the five excedance classes of $S_{3}$ are:
\[
\mathcal{C}_{\begin{tikzpicture}[scale = 0.35, baseline = 0.35*-0.2]
\foreach \x in {1, ..., 3}{\draw[fill] (\x - 1, 0) circle (2pt) node[inner sep = 2pt] (\x) {};}
\foreach \x in {1, ..., 3}{\node[below] at (\x) {$\scriptstyle \x$};}
\foreach \i\j in {1/3}{\draw[thick] (\i) to[out = 35, in = 145] (\j);}
\end{tikzpicture}} = \{ \stackrel{1}{3}  \stackrel{2}{2} \color{red} \stackrel{3}{1} \color{black}, \stackrel{1}{2}\stackrel{2}{3} \color{red} \stackrel{3}{1} \color{black} \}, \qquad
\mathcal{C}_{\begin{tikzpicture}[scale = 0.35, baseline = 0.35*-0.2]
\foreach \x in {1, ..., 3}{\draw[fill] (\x - 1, 0) circle (2pt) node[inner sep = 2pt] (\x) {};}
\foreach \x in {1, ..., 3}{\node[below] at (\x) {$\scriptstyle \x$};}
\foreach \i\j in {1/2, 2/3}{\draw[thick] (\i) to[out = 35, in = 145] (\j);}
\end{tikzpicture}} = \{ \stackrel{1}{3} \color{red} \stackrel{2}{1} \stackrel{3}{2}  \color{black} \}, \qquad
\mathcal{C}_{\begin{tikzpicture}[scale = 0.35, baseline = 0.35*-0.2]
\foreach \x in {1, ..., 3}{\draw[fill] (\x - 1, 0) circle (2pt) node[inner sep = 2pt] (\x) {};}
\foreach \x in {1, ..., 3}{\node[below] at (\x) {$\scriptstyle \x$};}
\foreach \i\j in {1/2}{\draw[thick] (\i) to[out = 35, in = 145] (\j);}
\end{tikzpicture}} = \{\stackrel{1}{2}  \color{red} \stackrel{2}{1} \color{black}  \stackrel{3}{3}\}, 
\]
\[
\mathcal{C}_{\begin{tikzpicture}[scale = 0.35, baseline = 0.35*-0.2]
\foreach \x in {1, ..., 3}{\draw[fill] (\x - 1, 0) circle (2pt) node[inner sep = 2pt] (\x) {};}
\foreach \x in {1, ..., 3}{\node[below] at (\x) {$\scriptstyle \x$};}
\foreach \i\j in {2/3}{\draw[thick] (\i) to[out = 35, in = 145] (\j);}
\end{tikzpicture}} = \{\stackrel{1}{1} \stackrel{2}{3}  \color{red} \stackrel{3}{2} \color{black} \}, \qquad\text{and}\qquad
\mathcal{C}_{\begin{tikzpicture}[scale = 0.35, baseline = 0.35*-0.2]
\foreach \x in {1, ..., 3}{\draw[fill] (\x - 1, 0) circle (2pt) node[inner sep = 2pt] (\x) {};}
\foreach \x in {1, ..., 3}{\node[below] at (\x) {$\scriptstyle \x$};}
\end{tikzpicture}} = \{\stackrel{1}{1} \stackrel{2}{2} \stackrel{3}{3} \}.
\]

The Bruhat order induces a relation on $S_{n}/\!\!\sim$, and in Section~\ref{sec:ex3} we show that this relation is a partial order by way of the following result.  Recall the order $\preceq$ from Section~\ref{sec:bruhatballot}.

\begin{thm}
\label{thm:excedancequotient}
Writing $\le$ for the relation on excedance classes $S_{n}\big/\!\! \sim$ induced by the Bruhat order, $\CCC_{\lambda} \le \CCC_{\mu}$ if and only if $\lambda \preceq \mu$.
\end{thm}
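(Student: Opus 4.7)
The plan is to establish both directions using the tools from Section~\ref{sec:bruhatballot}. For the reverse direction, I first observe that $Q_\lambda \in \CCC_\lambda$: by the definition of $Q_\lambda$, a position $j$ is a weak excedance of $Q_\lambda$ exactly when $j \notin \lambda^-$, so $\EP(Q_\lambda) = [n] \setminus \lambda^-$; similarly, the values $Q_\lambda(j)$ appearing at weak excedance positions are precisely the maximum elements of each connected component of $\lambda$, which are exactly the non-left-endpoints, giving $\EV(Q_\lambda) = [n] \setminus \lambda^+$. Hence $\lambda \preceq \mu$ gives $Q_\lambda \le Q_\mu$ in the Bruhat order by Proposition~\ref{prop:QSVorderbijection}, and since $Q_\lambda \in \CCC_\lambda$ and $Q_\mu \in \CCC_\mu$, this witnesses $\CCC_\lambda \le \CCC_\mu$.

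The forward direction is where the real work lies. Suppose $v \in \CCC_\lambda$ and $w \in \CCC_\mu$ with $v \le w$. The idea is to translate the tableau criterion of Proposition~\ref{TableauCriterion}---which asserts $\TTT_k(v) \le \TTT_k(w)$ entrywise for each $k$, or equivalently $|\{v_1,\ldots,v_k\} \cap [m]| \ge |\{w_1,\ldots,w_k\} \cap [m]|$ for all $k, m$---into the language of Corollary~\ref{cor:bruhatncphelper}. The key intermediate identities are
\[
|\{v_1,\ldots,v_k\} \cap [k]| \ = \ k - |\lambda^+ \cap [k]| + |\lambda^- \cap [k]|,
\]
\[
|\{v_1,\ldots,v_k\} \cap [k-1]| \ = \ (k-1) - |\lambda^+ \cap [k-1]| + |\lambda^- \cap [k]|,
\]
valid for \emph{any} $v \in \CCC_\lambda$. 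To establish these, I partition the positions $i \le k$ into weak excedances and non-excedances. The non-excedance positions $i \le k$ satisfy $v_i < i \le k$, so they contribute automatically to both counts and are enumerated by $|\lambda^- \cap [k]|$. The weak excedance positions $i \le k$ with $v_i \le k$ (respectively $v_i \le k-1$) are in bijection with their values, which are the weak excedance values in $[k]$ (respectively $[k-1]$); since $\EV(v) = [n] \setminus \lambda^+$, this accounts for the remaining $k - |\lambda^+ \cap [k]|$ (resp. $(k-1) - |\lambda^+ \cap [k-1]|$) positions.

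Given these identities, applying the tableau criterion at $m = k$ and $m = k-1$ for each $1 \le k \le n$ yields exactly the two inequalities of Corollary~\ref{cor:bruhatncphelper}, which by that corollary is equivalent to $\lambda \preceq \mu$. The main obstacle is the careful bookkeeping in the counting identities, especially the appearance of $\lambda^- \cap [k]$ (rather than $\lambda^- \cap [k-1]$) in the second identity: a non-excedance at position $i = k$ has value $v_k < k$ and hence still contributes to $\{v_1,\ldots,v_k\} \cap [k-1]$. This index subtlety is precisely what forces both inequalities of Corollary~\ref{cor:bruhatncphelper} to appear and distinguishes the excedance quotient from a coarser count based only on $\EP$ or $\EV$ in isolation.
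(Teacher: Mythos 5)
Your proof is correct, and it is built from the same essential ingredients as the paper's: the tableau criterion (Proposition~\ref{TableauCriterion}), counting identities that translate excedance data into cardinalities of intersections (your two identities are equivalent, via complementation in $[k]$ and $[k-1]$, to the two equations of Lemma~\ref{lem:tableautobruhat}), and the ballot-sequence characterization of $\preceq$ (Corollary~\ref{cor:bruhatncphelper}). The reverse direction is identical to the paper's.

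The forward direction, however, is organized more economically than the paper's. The paper first proves that $Q_{\mu}$ is the Bruhat maximum of $\CCC_{\mu}$ (Proposition~\ref{prop:QSVinterval}, which requires a fairly involved two-part comparison of tableaux), then chains $v \le w \le Q_{\mu}$ by transitivity, and finally applies the counting argument to the pair $(v, Q_{\mu})$ inside Proposition~\ref{prop:Qdominates}. You observe instead that the relevant cardinalities $|\{v_1,\ldots,v_k\}\cap[m]|$ for $m\in\{k-1,k\}$ are invariants of the excedance class of $v$, so the tableau criterion applied directly to the arbitrary pair $(v, w)$ already yields exactly the two families of inequalities in Corollary~\ref{cor:bruhatncphelper}. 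This bypasses the maximality of $Q_{\mu}$ entirely and makes the theorem's proof self-contained. Note, though, that the paper still needs Proposition~\ref{prop:QSVinterval} for Corollary~\ref{cor:interval} (that each $\CCC_{\lambda}$ is a Bruhat interval with top $Q_{\lambda}$), so this is a simplification of this particular proof rather than of the section as a whole. Your remark about the index subtlety, namely that a non-excedance at position $k$ contributes to the count against $[k-1]$ and hence forces $\lambda^{-}\cap[k]$ rather than $\lambda^{-}\cap[k-1]$ in the second identity, is exactly right and is the crux of why both inequalities of Corollary~\ref{cor:bruhatncphelper} fall out.
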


Two key intermediate results in the proof of Theorem~\ref{thm:excedancequotient} appear in Section~\ref{sec:ex2}: we show that each excedance class $\CCC_{\lambda}$ contains unique Bruhat-minimal and Bruhat-maximal elements, and moreover that these are respectively a $321$-avoiding permutation and the element $Q_{\lambda} \in \QSV_{n}$.  Combined with Theorem~\ref{thm:excedancequotient}, this implies the following corollary.  

\begin{cor}\label{cor:interval}
Each excedance class $\CCC_{\lambda}$ is an interval in the Bruhat order, with upper bound $Q_{\lambda}\in \QSV_n$ and lower bound given by a $321$-avoiding permutation
\end{cor}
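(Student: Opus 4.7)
The plan is to combine two ingredients: (i) the intermediate claim announced just before the corollary---each excedance class $\CCC_{\lambda}$ has a unique Bruhat-maximum $Q_{\lambda}\in\QSV_{n}$ and a unique Bruhat-minimum $m_{\lambda}$ that is $321$-avoiding---and (ii) Theorem~\ref{thm:excedancequotient}. Granting (i), the inclusion $\CCC_{\lambda}\subseteq [m_{\lambda},Q_{\lambda}]$ is immediate. For the reverse inclusion, take any $w\in[m_{\lambda},Q_{\lambda}]$ in the Bruhat order and let $\mu\in\NCP_{n}$ index its excedance class, so $w\in\CCC_{\mu}$. The relation $m_{\lambda}\le w$ places $\CCC_{\lambda}\le\CCC_{\mu}$ in the induced quotient order, so by Theorem~\ref{thm:excedancequotient} we have $\lambda\preceq\mu$. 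Symmetrically, $w\le Q_{\lambda}$ forces $\mu\preceq\lambda$. Antisymmetry of $\preceq$ then gives $\mu=\lambda$, whence $w\in\CCC_{\lambda}$ and the corollary follows.

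The real work lies in ingredient (i), and I would handle both halves using the tableau criterion (Proposition~\ref{TableauCriterion}). That $Q_{\lambda}\in\CCC_{\lambda}$ is a direct computation from Lemma~\ref{lem:QSVcycles}: the arcs of $\lambda$ are precisely the pairs $(Q_{\lambda}(j),j)$ with $j\in\lambda^{-}$, so $\EP(Q_{\lambda})=[n]\setminus\lambda^{-}$ and $\EV(Q_{\lambda})=[n]\setminus\lambda^{+}$. To see that $Q_{\lambda}$ is Bruhat-maximal in its class, I would show row by row that $\TTT_{k}(Q_{\lambda})$ dominates $\TTT_{k}(w)$ for every $w\in\CCC_{\lambda}$, exploiting that $Q_{\lambda}$ assigns the largest available value to each excedance position consistent with the fixed sets $\EP$ and $\EV$.

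For the minimum, the key observation is that any $321$-avoiding permutation is determined by its pair $(\EP,\EV)$, since both restrictions $w|_{\EP(w)}$ and $w|_{[n]\setminus\EP(w)}$ are then forced to be order-preserving. This produces a unique candidate $m_{\lambda}$ with the required excedance data, and the ballot-type inequality of Lemma~\ref{lem:noncrossingpartitionproperty} is exactly what guarantees that this order-preserving assignment yields a bona fide permutation lying in $\CCC_{\lambda}$. Bruhat-minimality of $m_{\lambda}$ then follows from the dual tableau comparison, since $m_{\lambda}$ places the smallest possible value at each position compatible with the given excedance data. I expect this tableau bookkeeping---tracking how excedance and non-excedance positions interleave within $[k]$ for each $k$---to be the main technical hurdle, but the explicit cycle description of $Q_{\lambda}$ and the order-preserving recipe for $m_{\lambda}$ should make both comparisons manageable.
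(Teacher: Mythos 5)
Your argument for the corollary itself---granting the existence of a unique Bruhat-minimum $321$-avoiding element $m_\lambda$ and a unique Bruhat-maximum $Q_\lambda$ in each class---is correct and is exactly the argument the paper leaves implicit: for $w\in[m_\lambda,Q_\lambda]$ lying in $\CCC_\mu$, the inequality $m_\lambda\le w$ gives $\CCC_\lambda\le\CCC_\mu$, so $\lambda\preceq\mu$ by Theorem~\ref{thm:excedancequotient}, while $w\le Q_\lambda$ gives $\mu\preceq\lambda$, and antisymmetry of $\preceq$ forces $\mu=\lambda$. Your treatment of the minimum also matches the paper in spirit and is in fact a slightly tidier route: the observation that a $321$-avoiding permutation is determined by $(\EP,\EV)$---because its restrictions to $\EP(w)$ and $[n]\setminus\EP(w)$ are forced to be order-preserving---simultaneously establishes uniqueness and suggests the construction, whereas the paper constructs $T_\lambda$ explicitly and proves $321$-avoidance as a separate proposition. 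You are also right that the ballot-type inequality of Lemma~\ref{lem:noncrossingpartitionproperty} is exactly what guarantees the order-preserving recipe lands in $\CCC_\lambda$; this is the content of the paper's verification that $a_r<b_r$ in Proposition~\ref{prop:321avoid}.

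The one place your sketch misstates the situation is in the justification for Bruhat-maximality of $Q_\lambda$. The heuristic that ``$Q_\lambda$ assigns the largest available value to each excedance position consistent with the fixed $\EP,\EV$'' is not an accurate description of $Q_\lambda$. For instance, with $n=5$, $\lambda^+=\{1,2\}$, $\lambda^-=\{3,5\}$ (arcs $\edge{1}{5}$ and $\edge{2}{3}$), one has $Q_\lambda=53241$, so position $2$ receives $3$ even though $4$ is available and would satisfy the excedance constraint; a greedy left-to-right maximal assignment would begin $5\,4\,\cdots$, which is not $Q_\lambda$ and in fact cannot be completed inside $\CCC_\lambda$ (position $4$ would be forced to a non-excedance value). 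The actual proof in Proposition~\ref{prop:QSVinterval} is more delicate: it compares $\TTT_k(w)$ to $\TTT_k(Q_\lambda)$ by splitting entries into those greater than $k$ and those at most $k$, then uses Lemma~\ref{lem:tableautobruhat} to translate both counts into numbers of arcs of $\lambda$ lying above certain elements. The extremality of $Q_\lambda$ comes from this arc-counting, not from a greedy assignment of values. So the high-level plan is right, but the heuristic you would use to carry out the tableau comparison is not, and the two-part counting argument in the paper is genuinely needed.
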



\begin{rem}
\label{rem:altECs}
Two alternative notions of ``excedance classes'' are worth mentioning.
\begin{enumerate}
\item Our definition of excedance class is one of two obvious choices: one can instead consider the equivalence classes determined by sets $\EV'(w)$ and $\EP'(w)$ of values $i$ and positions $w_{i}$ for which $i < w_{i}$, known as strict excedances.  
This choice is equally viable and would lead to results equivalent to the ones presented here.  This equivalence is directly  related to the automorphism described in Remark~\ref{rem:auto} (1).

\item The excedance relation refines an equivalence relation defined in~\cite{ES}, for which permutations are related if they share the same excedance position set.  
The authors of~\cite{ES} prove several enumerative and statistical results about their ``excedance classes,'' and it is worth considering how these results extend to our notion of excedance class.  
%

\end{enumerate}
\end{rem}

\subsection{Excedance classes and noncrossing partitions}
\label{sec:ex1}

This section will establish some basic results which relate excedance classes to noncrossing partitions.  

\begin{prop}
For $n \ge 0$, the map
\[
\begin{array}{ccc}
\NCP_{n} & \longrightarrow & S_{n}\big/\!\!\sim \\
\lambda & \longmapsto & \CCC_{\lambda}
\end{array}
\]
is a bijection.
\end{prop}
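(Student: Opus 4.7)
The proof has three parts, all relying on Lemma~\ref{lem:noncrossingpartitionproperty}. First, to see each $\CCC_\lambda$ is non-empty, I would verify $Q_\lambda\in\CCC_\lambda$: by Lemma~\ref{lem:QSVcycles}, $Q_\lambda$ acts on each connected component $\{c_{r,1}<\cdots<c_{r,m_r}\}$ as the cycle $(c_{r,m_r}\cdots c_{r,1})$, so $c_{r,1}\mapsto c_{r,m_r}\ge c_{r,1}$ is a weak excedance while $c_{r,i}\mapsto c_{r,i-1}<c_{r,i}$ for $i>1$ is not. Therefore the weak excedance positions of $Q_\lambda$ are the component minima, which are precisely $[n]\setminus\lambda^-$, and the weak excedance values are the component maxima, precisely $[n]\setminus\lambda^+$. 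Injectivity is then immediate: if $\CCC_\lambda=\CCC_\mu$, both are non-empty and their defining sets agree, so $\lambda^+=\mu^+$ and $\lambda^-=\mu^-$, whence Lemma~\ref{lem:noncrossingpartitionproperty} gives $\lambda=\mu$.

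The substantive step is surjectivity. Given an arbitrary equivalence class, I would pick a representative $w$ and set $L=[n]\setminus\EV(w)$ and $R=[n]\setminus\EP(w)$; the aim is to produce $\lambda\in\NCP_n$ with $\lambda^+=L$ and $\lambda^-=R$ via Lemma~\ref{lem:noncrossingpartitionproperty}. The equality $|L|=|R|$ follows from $|\EV(w)|=|\EP(w)|$, since both count weak excedances of $w$. For the key inequality $|L\cap[k-1]|\ge|R\cap[k]|$, I would exhibit the injection $R\cap[k]\hookrightarrow L\cap[k-1]$ given by $i\mapsto w_i$: if $i\in R\cap[k]$, then $w_i<i\le k$ forces $w_i\in[k-1]$, and $w^{-1}(w_i)=i>w_i$ puts $w_i\in L$; injectivity follows because $w$ itself is a bijection. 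Lemma~\ref{lem:noncrossingpartitionproperty} then supplies the desired $\lambda$, and $w\in\CCC_\lambda$ by construction, so the equivalence class is $\CCC_\lambda$.

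The main obstacle is identifying the right injection for the key inequality. Once one notices that the \emph{image} of a non-weak-excedance position is itself precisely the certificate needed for that value to lie in $L\cap[k-1]$, however, the argument assembles without difficulty, and the proposition reduces to a clean application of Lemma~\ref{lem:noncrossingpartitionproperty}.
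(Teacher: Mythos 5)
Your proof is correct and follows essentially the same route as the paper: surjectivity via Lemma~\ref{lem:noncrossingpartitionproperty} together with the observation that a non-excedance position $i\in R\cap[k]$ yields the non-excedance value $w_i\in L\cap[k-1]$ (the paper phrases this same injection as a set containment), and nonemptiness by checking $Q_\lambda\in\CCC_\lambda$ (the paper's Lemma~\ref{lem:QSVexcedance}). The explicit injectivity step you include is folded into the paper's appeal to Lemma~\ref{lem:noncrossingpartitionproperty}, but the substance is identical.
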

\begin{proof}
We will first show that every excedance class has the form $\CCC_{\lambda}$ for some $\lambda \in \NCP_{n}$.  
By Lemma~\ref{lem:noncrossingpartitionproperty}, this is equivalent to showing that for any $w \in S_{n}$, the following criterion holds:
\[
\Big|\big\{i \in [k-1] \;|\; i \notin \EV(w)\big\}\Big| \ \ge\  \Big|\big\{ j \in [k] \;|\; j \notin \EP(w)\big\}\Big| \qquad\text{for all $1 \le k \le n$}.
\]
We will establish the above inequality directly.  Fix $1 \le k \le n$ and suppose that $j \in [k]$ is not an excedance position, so that that $w_{j} < j$.  By assumption, $w_{j} < k$ and $w_{j}$ is not an excedance value.  Thus, 
\[
\big\{i \;|\; \text{$i \in [k-1]$ and $i \notin \EV(w)$} \big\}\  \supseteq\  \big\{w_{j} \;|\; \text{$j \in [k]$ and $j \notin \EP(w)$}\big\},
\]
giving the claim.  Now, we must show that each $\CCC_{\lambda}$ is nonempty.  This is established by Lemma~\ref{lem:QSVexcedance} below, which completes the proof.
\end{proof}

\begin{lem}
\label{lem:QSVexcedance}
Let $\lambda$ be a noncrossing partition of size $n$.  Then $Q_{\lambda} \in \CCC_{\lambda}$, so that
\[
\EV(Q_{\lambda}) = [n] \setminus \lambda^{+}
\qquad\text{and}\qquad
\EP(Q_{\lambda}) = [n] \setminus \lambda^{-}.
\]
\end{lem}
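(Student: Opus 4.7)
The plan is to read off both $\EP(Q_\lambda)$ and $\EV(Q_\lambda)$ directly from the cycle decomposition of $Q_\lambda$ supplied by Lemma~\ref{lem:QSVcycles}, once $\lambda^+$ and $\lambda^-$ have been described purely in terms of the block data $C_1, \ldots, C_s$.

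The first step is to show that the ``no coterminal points'' condition in the definition of a noncrossing partition forces the arcs inside each component $C_r = \{c_{r,1} < c_{r,2} < \cdots < c_{r,|C_r|}\}$ to be exactly the chain of arcs $(c_{r,i}, c_{r,i+1})$ for $1 \le i < |C_r|$. Indeed, every arc joins two vertices of a common block, and since each vertex can be a left endpoint of at most one arc and a right endpoint of at most one arc, the resulting directed graph on $C_r$ is a union of paths with strictly increasing labels; connectedness of $C_r$ then forces it to be the unique increasing path through $C_r$. Reading off left and right endpoints gives
\[
[n] \setminus \lambda^+ = \{c_{r,|C_r|} \;|\; 1 \le r \le s\} \qquad\text{and}\qquad [n] \setminus \lambda^- = \{c_{r,1} \;|\; 1 \le r \le s\},
\]
i.e., the sets of block maxima and block minima, respectively.

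The second step applies Lemma~\ref{lem:QSVcycles} one block at a time. Within the cycle $(c_{r,|C_r|} \cdots c_{r,2}\, c_{r,1})$ one has $Q_\lambda(c_{r,1}) = c_{r,|C_r|} \ge c_{r,1}$, so $c_{r,1}$ is a weak excedance position carrying the value $c_{r,|C_r|}$, whereas $Q_\lambda(c_{r,i}) = c_{r,i-1} < c_{r,i}$ for every $i > 1$, so those positions are not weak excedances. Aggregating over all blocks yields $\EP(Q_\lambda) = \{c_{r,1} \;|\; 1 \le r \le s\} = [n] \setminus \lambda^-$ and $\EV(Q_\lambda) = \{c_{r,|C_r|} \;|\; 1 \le r \le s\} = [n] \setminus \lambda^+$, as claimed.

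The argument is essentially bookkeeping, so I do not expect any serious obstacle; the only step warranting care is the initial identification of $\lambda^+$ and $\lambda^-$ with block non-extrema, which is a standard consequence of the noncrossing and non-coterminal axioms but deserves to be spelled out before one invokes Lemma~\ref{lem:QSVcycles}.
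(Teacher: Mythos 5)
Your proof is correct; the only caveat is that it routes through Lemma~\ref{lem:QSVcycles} and a preliminary identification of $[n]\setminus\lambda^{+}$ and $[n]\setminus\lambda^{-}$ with block maxima and block minima, whereas the paper's proof is a one-line reading of the definition of $Q_\lambda$. The paper simply observes that $Q_\lambda(j) < j$ occurs precisely in the first clause of the definition (when $j\in\lambda^-$, with $Q_\lambda(j)$ then being a left endpoint in $\lambda^+$), while the second clause (when $j\notin\lambda^-$) sends $j$ to the block maximum $k\ge j$, a weak excedance; both $\EP$ and $\EV$ drop out immediately. Your first step -- that the non-coterminal condition forces each block $C_r$ to carry exactly the increasing chain of arcs $\edge{c_{r,i}}{c_{r,i+1}}$ -- is sound (each vertex has left out-degree and right in-degree at most one, the arcs give a DAG, so a connected component is a single increasing directed path), and is the same structural fact used implicitly in Lemma~\ref{lem:QSVcycles} and in the definition of $Q_\lambda$ itself. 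The upshot is that your argument duplicates work already packaged into Lemma~\ref{lem:QSVcycles}; the paper's shortcut skips the cycle decomposition altogether by appealing directly to the two cases in the definition of $Q_\lambda$. Both are valid; yours is more explicit and self-contained, the paper's is more economical.
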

\begin{proof}
By definition, $Q_{\lambda}(j) < j$ if and only if the arc $ \edge{Q_{\lambda}(j)}{j}$ appears in $\lambda$, in which case $j \in \lambda^{-}$ and $Q_{\lambda}(j) \in \lambda^{+}$; this establishes the claim.
\end{proof}

The final result relates certain properties of the elements of $\CCC_{\lambda}$ to the partial sums of the ballot sequence $b^{\lambda}$ defined in Section~\ref{sec:bruhatballot}, by way of Corollary~\ref{cor:bruhatncphelper}.  This will be key to a number of arguments in subsequent sections.

\begin{lem}
\label{lem:tableautobruhat}
Let $\lambda$ be a noncrossing partition of size $n \ge 0$ and take $w \in \CCC_{\lambda}$.  For all $1 \le k \le n$, 
\[
\big|\{ w_{i} \;|\; \text{$n \ge i > k$ and $w_{i} < k$}\}\big|
\ = \ \big|\lambda^{+} \cap [k-1]\big| - \big|\lambda^{-} \cap [k]\big|
\]
and 
\[
\big|\{ w_{i} \;|\; \text{$1 \le i \le k$ and $w_{i} > k$}\}\big|
\ =\  \big|\lambda^{+} \cap [k]\big| - \big|\lambda^{-} \cap [k]\big|.
\]
\end{lem}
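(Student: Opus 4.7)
The plan is to prove both identities by direct enumeration, organizing the counting via the $2 \times 2$ partition of the pairs $(i, w_i)$ according to whether $i \le k$ or $i > k$ and whether $w_i \le k$ or $w_i > k$. The hypothesis $w \in \CCC_\lambda$ enters only through the two size identities $|\EP(w) \cap [k]| = k - |\lambda^- \cap [k]|$ and $|\EV(w) \cap [k]| = k - |\lambda^+ \cap [k]|$, which are immediate from the definition of $\CCC_\lambda$.

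I would tackle the second identity first, since it admits the cleaner counting argument. The left-hand set equals $\{i \le k : w_i > k\}$ by bijectivity of $w$, and every such $i$ satisfies $i \le k < w_i$, so it is automatically a weak excedance position with excedance value outside $[k]$. The remaining positions in $\EP(w) \cap [k]$ are those with $w_i \le k$, and they correspond bijectively to $\EV(w) \cap [k]$ under $i \mapsto w_i$ (since $i \le w_i \le k$). Subtracting produces $|\EP(w) \cap [k]| - |\EV(w) \cap [k]| = |\lambda^+ \cap [k]| - |\lambda^- \cap [k]|$, which is the desired formula.

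For the first identity I would reduce to the second by complement counting. Rewriting the left side as $|\{i > k : w_i < k\}|$ and splitting off the possible $w_i = k$ case, the count becomes $|\{i > k : w_i \le k\}|$ decreased by $1$ exactly when $w^{-1}(k) > k$. The main term equals $|\{i \le k : w_i > k\}|$ because the two off-diagonal quadrants of any permutation have the same size, and I already have that count. The correction subtracts $1$ precisely when $k \notin \EV(w)$, i.e., when $k \in \lambda^+$, which is exactly what is needed to shift $|\lambda^+ \cap [k]|$ down to $|\lambda^+ \cap [k-1]|$. The whole argument is essentially bookkeeping, and the only place where I expect to have to be careful is keeping strict versus weak inequalities straight so that the off-by-one shift from $[k]$ to $[k-1]$ in the first identity emerges correctly from the boundary case $w_i = k$.
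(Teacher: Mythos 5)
Your proposal is correct. The argument for the second identity is essentially the paper's: you decompose $\EP(w)\cap[k]$ according to whether $w_i\le k$ or $w_i>k$, identify the former with $\EV(w)\cap[k]$ via $i\mapsto w_i$, and convert $|\EP(w)\cap[k]|-|\EV(w)\cap[k]|$ to $|\lambda^+\cap[k]|-|\lambda^-\cap[k]|$ using the definition of $\CCC_\lambda$. (The only cosmetic nit: the set $\{w_i : i\le k,\ w_i>k\}$ is a set of \emph{values}, not positions, so it is equinumerous with, rather than equal to, $\{i\le k : w_i>k\}$; your use enumerates correctly.)

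Where you genuinely diverge from the paper is in the first identity. The paper runs a second, independent bijection argument: it identifies $\{i\in[k] : i\notin\EP(w)\}$ with a subset of $\{j\in[k-1] : j\notin\EV(w)\}$ via $i\mapsto w_i$ and shows the unmatched part is exactly $\{w_i : i>k,\ w_i<k\}$. You instead \emph{reuse} the second identity: the off-diagonal quadrants $\{i\le k: w_i>k\}$ and $\{i>k : w_i\le k\}$ have the same size for any permutation, and then a boundary correction at $w_i=k$ (which occurs with $w^{-1}(k)>k$ exactly when $k\in\lambda^+$) turns $|\lambda^+\cap[k]|$ into $|\lambda^+\cap[k-1]|$. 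Both are short and elementary; your route is slightly more economical in that it avoids setting up a parallel bijection, at the cost of the careful $w_i=k$ case analysis you flag. Your bookkeeping is right: $w^{-1}(k)>k$ is precisely the statement $k\notin\EV(w)$, i.e.\ $k\in\lambda^+$, so the $-1$ lands exactly where it should.
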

\begin{proof}
To show the first equation, note that by definition
\[
\big|\lambda^{+} \cap [k-1]\big| - \big|\lambda^{-} \cap [k]\big| \ = \ \Big|\big\{ w_{i} \in [k-1] \;|\; w_{i} \notin \EV(w)\big\}\Big| - \Big|\big\{i \in [k] \;|\; i \notin \EP(w)\big\}\Big|.
\]
The permutation $w$ gives a bijection between the $i \in [k]$ with $i \notin \EP(w)$ and the $w_{i} \in [k-1]$ with both $w_{i} \notin \EV(w)$ and $i \in [k]$.  
Therefore, the right hand side above counts the $w_{i} \in [k-1]$ for which $w_{i} \notin \EV(w)$ and $i \notin [k]$, which is exactly $\{ w_{i} \;|\; \text{$n \ge i > k$ and $w_{i} < k$}\}$.

The second equation follows from a similar but somewhat more complicated argument.  We begin by manipulating the right side into a more suitable form:
\begin{align*}
\big|\lambda^{+} \cap [k]\big| - \big|\lambda^{-} \cap [k] \big| \ &= \  \Big(k - \big|\lambda^{-} \cap [k] \big|\Big) - \Big(k - \big|\lambda^{+} \cap [k]\big|\Big) \\
&=\  \big|\EP(w) \cap [k]\big| - \big|\EV(w) \cap [k]\big|.
\end{align*}
For each $w_{i} \in \EV(w) \cap [k]$ we have $i \le w_{i}$, and therefore $i \in \EP(w) \cap [k]$.  Thus, $w^{-1}$ gives a bijection between $\EV(w) \cap [k]$ and the subset of $i \in \EP(w) \cap [k]$ for which $w_{i} \le k$.  Therefore, the equation above counts the positions $i \in \EP(w) \cap [k]$ for which $w_{i} \notin [k]$; this set is equinumerous to $\{ w_{i} \;|\; \text{$1 \le i \le k$ and $w_{i} > k$}\}$.
\end{proof}

\subsection{Minimal and maximal elements}
\label{sec:ex2}

This section will show that each excedance class contains a unique Bruhat minimum and maximum.  
This is a key intermediate step to showing that excedance classes are Bruhat intervals with a well-defined quotient order.  
We will begin with the maximal elements, which are the elements of $\QSV_{n}$, while the minimal ($321$-avoiding) elements are discussed after the proof of Proposition~\ref{prop:QSVinterval} below.

\begin{prop}
\label{prop:QSVinterval}
For all noncrossing partitions $\lambda$, $Q_{\lambda}$ is the Bruhat maximum element of $\CCC_{\lambda}$.  
\end{prop}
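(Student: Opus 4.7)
My plan is to apply the tableau criterion (Proposition~\ref{TableauCriterion}) together with Lemma~\ref{lem:tableautobruhat}, and show that $\TTT_{k}(w) \le \TTT_{k}(Q_{\lambda})$ entry-wise for every $w \in \CCC_{\lambda}$ and every $k$. This amounts to proving $|\{i \le k : w_{i} > m\}| \le |\{i \le k : Q_{\lambda}(i) > m\}|$ for all $k, m \in [n]$. The main leverage from Lemma~\ref{lem:tableautobruhat} is that the quantity $a_{k} := |\lambda^{+} \cap [k]| - |\lambda^{-} \cap [k]|$ counts the values exceeding $k$ in the first $k$ positions of \emph{every} $w \in \CCC_{\lambda}$; this uniformity is what makes the tableau comparison tractable.

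The argument then splits on whether $m \ge k$ or $m < k$. In the first case, any $i \le k$ with $w_{i} > m$ is forced to be an excedance position (since $w_{i} > m \ge k \ge i$), so the LHS reduces to $|\{r : c_{r, 1} \le k,\ w(c_{r, 1}) > m\}|$; by the cycle description in Lemma~\ref{lem:QSVcycles}, the RHS is the corresponding count $|\{r : c_{r, 1} \le k,\ c_{r, t_{r}} > m\}|$. In the second case, subtracting the common term $a_{k}$ reduces the inequality to $|\{w_{i} : i \in \lambda^{-} \cap [k],\ w_{i} > m\}| \le |\{Q_{\lambda}(i) : i \in \lambda^{-} \cap [k],\ Q_{\lambda}(i) > m\}|$, comparing non-excedance values at positions $\le k$; by the excedance class conditions, both sides live inside $\lambda^{+} \cap [k-1]$.

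The main obstacle is establishing these two dominance inequalities, and this is where the noncrossing structure is indispensable. The excedance portion $w|_{[n] \setminus \lambda^{-}}$ corresponds to a permutation $\sigma$ of blocks sending $c_{r, 1} \mapsto c_{\sigma(r), t_{\sigma(r)}}$ with the constraint $c_{\sigma(r), t_{\sigma(r)}} \ge c_{r, 1}$, and $Q_{\lambda}$ is the case $\sigma = \mathrm{id}$; the noncrossing property forces any non-identity $\sigma$ to exchange only nested blocks, and I would argue that each such ``crossing'' weakly decreases the relevant count. A parallel argument handles the non-excedance pairing, where $Q_{\lambda}$'s leftward-neighbor rule is maximal among the admissible bijections $\lambda^{-} \to \lambda^{+}$. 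Alternatively, one can prove the proposition by exchange: given $w \ne Q_{\lambda}$, identify a pair of positions (either two excedance positions indexed by nested blocks, or two non-excedance positions indexed by nested arcs) where the values are in ascending order and swap them; the noncrossing nesting ensures the swap preserves the excedance data, converts an ascent into an inversion (strictly raising the Bruhat length), and iterating terminates at $Q_{\lambda}$.
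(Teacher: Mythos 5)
Your proposal correctly identifies the framework---the tableau criterion reformulated as the counting inequality $|\{i \le k : w_i > m\}| \le |\{i \le k : Q_\lambda(i) > m\}|$, combined with Lemma~\ref{lem:tableautobruhat}---and this matches the paper's strategy exactly; the split on $m \ge k$ versus $m < k$ similarly corresponds to the paper's split between tableau entries above and below $k$. However, the proof then stops short precisely where the work lies: the two dominance inequalities are asserted (``I would argue that each such crossing weakly decreases the relevant count,'' with a parallel assertion that $Q_\lambda$'s leftward-neighbor rule is maximal among the admissible bijections $\lambda^- \to \lambda^+$), but never proved. These claims are the substance of the proposition. The paper closes this gap with a direct arc-counting argument that does not appear in your sketch: enumerating the entries $q_1 < \cdots < q_r$ of $\TTT_k(Q_\lambda)$ that exceed $k$, it uses Lemma~\ref{lem:tableautobruhat} to equate $|\{w_t : t \le q_i,\; w_t > q_i\}|$ with the number of arcs of $\lambda$ lying above $q_i$, identifies that quantity with $r-i$ by a combinatorial analysis of the connected components, and concludes $x_i \le q_i$ immediately. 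No monotonicity claim about nested-block exchanges is needed.

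The alternative exchange argument is also not a complete proof. Transposing two excedance positions $i < j$ with an ascent $w_i < w_j$ preserves the excedance data only if additionally $w_i \ge j$; transposing two non-excedance positions requires $w_j < i$. You write that ``the noncrossing nesting ensures the swap preserves the excedance data,'' but you do not show that for $w \neq Q_\lambda$ a transposition satisfying one of these extra constraints always exists, and this is a nontrivial structural claim about how the excedance class interacts with the noncrossing partition. Establishing it would amount to proving the same monotonicity you left unargued in the first route, so the exchange framing is not a shortcut but a reformulation of the missing step.
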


\begin{proof}
Lemma~\ref{lem:QSVexcedance} shows that $Q_{\lambda} \in \CCC_{\lambda}$, so we only need to show that $Q_{\lambda}$ is an upper bound for $\CCC_{\lambda}$; to this end, fix $w \in \CCC_{\lambda}$.  Using the tableau criterion (Proposition~\ref{TableauCriterion}), it is sufficient to show that each entry of $\TTT_{k}(w)$ is bounded above by the corresponding entry of $\TTT_{k}(Q_{\lambda})$ for each $1 \le k \le n$.

The argument consists of two distinct parts, first conducting an element-by-element comparison of the entries of $\TTT_{k}(w)$ and $\TTT_{k}(Q_{\lambda})$ which are strictly greater than $k$, and then doing the same for the elements which are at most $k$.  The validity of this approach relies on the fact that these collections of entries have the same cardinality for $w$ and $Q_{\lambda}$: by Lemma~\ref{lem:tableautobruhat}, 
\begin{align*}
\big|\{ w_{i} \;|\; \text{$1 \le i \le k$ and $w_{i} > k$}\}\big|
&= \big|\lambda^{+} \cap [k]| - |\lambda^{-} \cap [k] \big| \\
&= \big|\{ Q_{\lambda}(i) \;|\; \text{$1 \le i \le k$ and $w_{i} > k$}\}\big|\,.
\end{align*}

We begin with the first part.  Enumerate the entries of $\TTT_{k}(w)$ and $\TTT_{k}(Q_{\lambda})$ which are greater than $k$ in increasing order as $x_{1} < x_{2} < \cdots <  x_{r}$ and $q_{1} < q_{2} < \ldots < q_{r}$ respectively.  We aim to show that $x_{i} \le q_{i}$ for each $1 \le i \le r$.  
Fixing one such $i$, Lemma~\ref{lem:tableautobruhat} gives that
\[
\big|\{ w_{t} \;|\; \text{$1 \le t \le q_{i}$ and $w_{t} > q_{i}$} \}\big| = \big|\lambda^{+} \cap [q_{i}]| - |\lambda^{-} \cap [q_{i}]\big|.
\]
It is sufficient to show that the above expressions are equal to $r - i$, as this implies that each of $x_{1}, x_{2}, \ldots, x_{i}$ is not contained in the set $\{ w_{t} \;|\; \text{$1 \le t \le q_{i}$ and $w_{t} > q_{i}$} \}$, and therefore that $x_{i} \le q_{i}$.

Let $C$ denote the connected component of $\lambda$ containing the value $q_{i}$.  Since $q_i\in \TTT_{k}(Q_{\lambda})$ and $q_i>k$, we have that $t_i=Q_{\lambda}^{-1}(q_{i})\le k <q_i$.
From the definition of $Q_{\lambda}$, $q_{i}$ must be the maximal element of $C$,  and $t_i$ the minimal element.  
Combinatorially, the difference $\big|\lambda^{+} \cap [q_{i}]\big| - \big|\lambda^{-} \cap [q_{i}]\big|$ counts the number of arcs in $\lambda$ with left endpoint in $[q_{i}]$ and right endpoint in $[n] \setminus [q_{i}]$, and as $q_{i} \in \EV(w)$, this is the number of arcs in $\lambda$ which are above $q_{i}$.  
Every arc in $\lambda$ which lies above $q_{i}$ must have all elements of $C$ between its left and right endpoints, so that the left endpoint of any such an arc is contained in $[k]$ and the right endpoint is greater than $q_{i}$.  
Thus, each such arc belongs to the connected component of one of the elements $q_{i+1}, \ldots, q_{r}$, and there are precisely $r - i$ such connected components.

For the second part of the argument, we aim to show that each entry of $\TTT_{k}(w)$ which is at most $k$ is less than or equal to the analogous entry of $\TTT_{k}(Q_{\lambda})$, and we establish this in an indirect manner described below.  
Writing $s = \big|\lambda^{+} \cap [k-1]\big| - \big|\lambda^{-} \cap [k]\big|$, Lemma~\ref{lem:tableautobruhat} states that there are exactly $s$ elements of $[k]$ which do not appear in $\TTT_{k}(w)$, and likewise for $\TTT_{k}(Q_{\lambda})$.  
Respectively enumerate these elements in increasing order as $y_{1} < y_{2} < \cdots < y_{s}$ and $p_{1} < p_{2} < \cdots < p_{s}$.  
We will show that $p_{i} \le y_{i}$ for each $1 \le i \le s$, as this implies the opposite comparison for the remaining elements of $[k] \setminus \{p_{1}, p_{2}, \ldots, p_{s}\}$ and $[k] \setminus \{y_{1}, y_{2}, \ldots, y_{s}\}$ as desired.  

Fixing $1 \le i \le s$, Lemma~\ref{lem:tableautobruhat} gives that
\[
\big|\{ w_{t} \;|\; \text{$n \ge t > k$ and $w_{t} < p_{i}$} \}\big| = \big|\lambda^{+} \cap [p_{i} - 1]\big| - \big|\lambda^{-} \cap [p_{i}]\big|.
\]
It is therefore sufficient to show that the above quantity is $i-1$, so that $p_{i}$ is bounded above by each of $y_{i}, y_{i+1}, \ldots, y_{s}$, and in particular $p_{i} \le y_{i}$.

Combinatorially, the difference $\big|\lambda^{+} \cap [p_{i} - 1]\big| - \big|\lambda^{-} \cap [p_{i}]\big|$ counts the number of arcs in $\lambda$ with a left endpoint $[p_{i} -1]$ and a right endpoint in $[n] \setminus [p_{i}]$, or equivalently, the arcs above $p_{i}$ in $\lambda$.  
Writing $C$ for the connected component containing $p_{i}$, each such arc must contain $C$ between its left and right endpoints, so the left endpoint is less that $p_{i}$ and the right endpoint lies somewhere in $[n] \setminus [k]$.  
Thus, each such arc must belong to the connected component of one of $p_{1}, p_{2}, \ldots, p_{i-1}$, and there are $i-1$ such connected components.
\end{proof}


We now turn to the minimal element of each excedance class.  For a noncrossing partition $\lambda$ of size $n$, enumerate the sets $\lambda^{+}$, $\lambda^{-}$, $[n] \setminus \lambda^{+}$, and  $[n] \setminus \lambda^{-}$ in increasing order as
\[
\lambda^{+} = \{a_{1} < a_{2} < \cdots < a_{s}\},
\qquad
\lambda^{-} = \{b_{1} < b_{2} < \cdots < b_{s}\},
\]
\[
[n] \setminus \lambda^{+} = \{x_{1} < x_{2} < \cdots < x_{n-s}\},
\qquad\text{and}\qquad
[n] \setminus \lambda^{-} = \{y_{1} < y_{2} < \cdots < y_{n-s}\}.
\]
Let $T_{\lambda} \in S_{n}$ be the permutation with
\[
T_{\lambda}(i) = \begin{cases} a_{r} & \text{if $i \in \lambda^{-}$ and $i = b_{r}$} \\ x_{r} & \text{if $i \notin \lambda^{-}$ and $i = y_{r}$.}  \end{cases}
\]
Thus, the two-line notation for $T_{\lambda}$ can be obtained by placing the elements of $\lambda^{+}$ in increasing left-to-right order below the elements of $\lambda^{-}$, and placing the elements of $[n] \setminus \lambda^{+}$ below the elements of $[n] \setminus \lambda^{-}$ in the same manner.  For example, with $n = 8$ and 
\[
\lambda = \begin{tikzpicture}[scale = 0.75, baseline = 0.75*-0.2]
\foreach \x in {1, ..., 8}{\draw[fill] (\x - 1, 0) node[inner sep = 2pt] (\x) {$\scriptstyle \x$};}
\foreach \i\j in {1/5, 2/3, 5/7}{\draw[thick] (\i) to[out = 35, in = 145] (\j);}
\end{tikzpicture}
\]
we have $\lambda^{+} = \{1, 2, 5\}$ and $\lambda^{-} = \{3, 5, 7\}$, $[8] \setminus \lambda^{+} = \{3, 4, 6, 7, 8\}$, and $[8] \setminus \lambda^{-} = \{1, 2, 4, 6, 8\}$, and consequently
\[
T_{\lambda} = \overset{1}{3} \overset{2}{4} {\color{red} \overset{3}{1}} \overset{4}{6} {\color{red} \overset{5}{2}} \overset{6}{7} {\color{red} \overset{7}{5}} \overset{8}{8},
\]
where non-excedances are marked in red, as at the beginning of Section~\ref{sec:excedance}.

\begin{prop}
\label{prop:321avoid}
For all noncrossing partitions $\lambda$, $T_{\lambda} \in \CCC_{\lambda}$, and $T_{\lambda}$ is the Bruhat-minimum element of $\CCC_{\lambda}$.
\end{prop}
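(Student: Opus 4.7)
The plan splits into two parts: showing $T_\lambda \in \CCC_\lambda$, then showing Bruhat-minimality. For $T_\lambda \in \CCC_\lambda$, I would verify $\EP(T_\lambda) = [n] \setminus \lambda^-$ and $\EV(T_\lambda) = [n] \setminus \lambda^+$ directly from the definition. For $i = b_r \in \lambda^-$, we have $T_\lambda(b_r) = a_r$, and Lemma~\ref{lem:noncrossingpartitionproperty} gives $\big|\lambda^+ \cap [b_r - 1]\big| \geq \big|\lambda^- \cap [b_r]\big| = r$, so $a_r \leq b_r - 1 < b_r$; this confirms $b_r \notin \EP(T_\lambda)$ and $a_r \notin \EV(T_\lambda)$. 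For $i = y_r \in [n] \setminus \lambda^-$, we have $T_\lambda(y_r) = x_r$, and I need $x_r \geq y_r$; unpacking the definitions of $x_r$ and $y_r$, this is equivalent to the pointwise inequality $\big|\lambda^+ \cap [k]\big| \geq \big|\lambda^- \cap [k]\big|$ for all $k$, another consequence of Lemma~\ref{lem:noncrossingpartitionproperty}.

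For Bruhat-minimality, I would fix $w \in \CCC_\lambda$ arbitrary and apply the tableau criterion (Proposition~\ref{TableauCriterion}) to show $T_\lambda \leq w$. It is convenient to rephrase the entrywise comparison of $\TTT_k(T_\lambda)$ and $\TTT_k(w)$ as the counting inequality
\[
\big|\{i \in [k] : T_\lambda(i) \leq v\}\big| \geq \big|\{i \in [k] : w(i) \leq v\}\big|
\]
for all $1 \leq k \leq n$ and all $v \in [n]$: applying this with $v$ equal to the $j$-th smallest entry of $\TTT_k(w)$ forces the $j$-th smallest entry of $\TTT_k(T_\lambda)$ to be at most that of $\TTT_k(w)$, which is exactly what the criterion requires.

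To establish the counting inequality, I would partition the values at positions in $[k]$ under each permutation into two pools according to whether they lie in $\lambda^+$ (non-excedance values) or in $[n] \setminus \lambda^+$ (excedance values). Since both $T_\lambda$ and $w$ belong to $\CCC_\lambda$, each permutation contributes exactly $\big|\lambda^- \cap [k]\big|$ values from $\lambda^+$ (one at each position in $\lambda^- \cap [k]$) and exactly $k - \big|\lambda^- \cap [k]\big|$ values from $[n] \setminus \lambda^+$. By construction, $T_\lambda$ picks the smallest available elements from each of the two pools, hence maximizes the count of entries falling in $[v]$ within each pool; summing the two pool-wise inequalities yields the required bound. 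The main obstacle is the bookkeeping for this two-pool counting argument, which is morally dual to the proof of Proposition~\ref{prop:QSVinterval}, where $Q_\lambda$ was shown to maximize each pool by taking the largest available values; no new ingredient beyond Lemma~\ref{lem:noncrossingpartitionproperty} and the tableau criterion should be needed.
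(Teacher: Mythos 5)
Your proposal is correct and follows essentially the same route as the paper: both establish $T_\lambda \in \CCC_\lambda$ from Lemma~\ref{lem:noncrossingpartitionproperty} and the increasing enumerations of $\lambda^\pm$ and their complements, and both prove minimality via the tableau criterion by observing that $\TTT_k(T_\lambda)$ selects the smallest available elements from the two pools $\lambda^+$ and $[n]\setminus\lambda^+$, while $\TTT_k(w)$ draws the same number from each pool but not necessarily the smallest. The only difference is that you make the final step rigorous by passing through the counting inequality $\big|\{i \in [k] : T_\lambda(i) \le v\}\big| \ge \big|\{i \in [k] : w(i) \le v\}\big|$, whereas the paper asserts the entrywise conclusion directly from the two-pool picture; your version is a clean way to justify that assertion but does not change the argument.
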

\begin{proof}
To see that $T_{\lambda} \in \CCC_{\lambda}$, recall the elements $a_{i}, b_{i}, x_{i}$, and $y_{i}$ defined above for $\lambda$. For $1 \le r \le s$, our enumeration of $\lambda^{+}$ and $\lambda^{-}$ ensure that
\[
a_{r} = \min\Big\{ k  \;|\;  r \ge \big|[k] \cap \lambda^{+}\big| \Big\}
\qquad\text{and}\qquad
b_{r} = \min\Big\{ k  \;|\;  r \ge \big|[k] \cap \lambda^{-}\big| \Big\}.
\]
By Lemma~\ref{lem:noncrossingpartitionproperty}, it is always the case that $\big|[k-1] \cap \lambda^{+}\big| \ge \big|[k] \cap \lambda^{-}\big|$, and so $a_{r} < b_{r}$.  Thus the pair $(b_{r}, a_{r}) = (b_{r}, T_{\lambda}(b_{r}))$ is not a weak excedance of $T_{\lambda}$.  
A similar argument shows that every pair $(y_{r}, x_{r})$, $1 \le r \le n-s$ is a weak excedance of $T_{\lambda}$, so we conclude that
\[
\lambda^{+} = [n] \setminus \EV(T_{\lambda})
\qquad\text{and}\qquad
\lambda^{-} =  [n] \setminus \EP(T_{\lambda}).
\]

To see that $T_{\lambda} \le w$ for all $w \in \CCC_{\lambda}$, recall the tableau criterion (Proposition~\ref{TableauCriterion}).  
For $1 \le k \le n$, the row $\TTT_{k}(T_{\lambda})$ will consist of the $\big|[k] \cap \lambda^{-}\big|$ smallest elements of $\EV(w)$ along with the $k - |[k] \cap \lambda^{-}|$ smallest elements of $[n] \setminus \EV(w)$.  
For $w$,  the row $\TTT_{k}(w)$ will also consist of $|[k] \cap \lambda^{-}|$ elements of $\EV(w)$ and $k - |[k] \cap \lambda^{-}|$ elements of $[n] \setminus \EV(w)$, but these elements need not be the smallest ones.  
Thus, by assumption of minimality, each entry of $\TTT_{k}(T_{\lambda})$ is bounded above by the corresponding entry of $\TTT_{k}(w)$.
\end{proof}

Finally, recall that a permutation $w \in S_{n}$ is \emph{$321$-avoiding} if there do not exist indices $i < j < k$ for which $w_{i} > w_{j} > w_{k}$.  The number of $321$-avoiding permutations is known to be the $n$th Catalan number, so the following result establishes that each excedance class contains a unique $321$-avoiding permutation.

\begin{prop}
For all noncrossing partitions $\lambda$, the permutation $T_{\lambda}$ is $321$-avoiding.
\end{prop}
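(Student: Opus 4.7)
The plan is to show that $T_{\lambda}$ decomposes into two increasing subsequences, after which the $321$-avoidance follows from an elementary pigeonhole argument.

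First, I would revisit the explicit description of $T_{\lambda}$ given above. Writing $\lambda^{-} = \{b_{1} < \cdots < b_{s}\}$ and $\lambda^{+} = \{a_{1} < \cdots < a_{s}\}$, we have $T_{\lambda}(b_{r}) = a_{r}$ for every $1 \le r \le s$, so the subsequence of values of $T_{\lambda}$ read at the positions of $\lambda^{-}$ (in their natural increasing order) is simply $a_{1} < a_{2} < \cdots < a_{s}$, which is increasing in both position and value. Analogously, writing $[n]\setminus \lambda^{-} = \{y_{1} < \cdots < y_{n-s}\}$ and $[n]\setminus \lambda^{+} = \{x_{1} < \cdots < x_{n-s}\}$, the identity $T_{\lambda}(y_{r}) = x_{r}$ exhibits a second increasing subsequence at the complementary positions. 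The partition $[n] = \lambda^{-} \cup ([n]\setminus \lambda^{-})$ therefore expresses $T_{\lambda}$ as the union of two increasing subsequences.

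Next, I would invoke the classical fact that any permutation which is a union of two increasing subsequences avoids the pattern $321$. Concretely, suppose for contradiction that $T_{\lambda}$ contained positions $i < j < k$ with $T_{\lambda}(i) > T_{\lambda}(j) > T_{\lambda}(k)$. By the pigeonhole principle, at least two of $\{i, j, k\}$ must lie in the same part of the partition above; say those two are $p < q$. Since the corresponding subsequence is increasing in both position and value, $T_{\lambda}(p) < T_{\lambda}(q)$, contradicting the strict descent among $T_{\lambda}(i), T_{\lambda}(j), T_{\lambda}(k)$.

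I do not expect any serious obstacle: once the two-line notation of $T_{\lambda}$ is written down, the two increasing subsequences are visible at a glance, and the pigeonhole step is standard. The only thing worth double-checking is that the assignments $b_{r} \mapsto a_{r}$ and $y_{r} \mapsto x_{r}$ are well-defined as a permutation, but this follows from $|\lambda^{+}| = |\lambda^{-}|$ (hence $|[n]\setminus\lambda^{+}| = |[n]\setminus\lambda^{-}|$), which was already recorded in Section~\ref{sec:ncp}.
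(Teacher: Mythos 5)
Your proposal is correct and takes essentially the same route as the paper: partition the positions $[n]$ into $\lambda^{-}$ and its complement, observe that $T_{\lambda}$ restricts to an order-preserving bijection on each part, and apply the pigeonhole principle to rule out a $321$-pattern. (In fact your bookkeeping is slightly cleaner than the paper's, which phrases the decomposition in terms of $\lambda^{+}$ rather than $\lambda^{-}$ --- apparently a small slip, since $T_{\lambda}$ maps $\lambda^{-}$ onto $\lambda^{+}$, not the other way around.)
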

\begin{proof}
Let $i, j, k\in [n]$ and assume without loss of generality that $i < j < k$.  Since any element of $[n]$ must be contained in either $\lambda^{+}$ or its complement, we must have two elements of $\{i, j, k\}$ which belong to one of $\lambda^{+}$ or $[n] \setminus \lambda^{+}$.  Since $T_{\lambda}$ restricts to an order-preserving bijection from $\lambda^{+}$ to $\lambda^{-}$, and from $[n] \setminus \lambda^{+}$ to $[n] \setminus \lambda^{-}$, this implies that no $321$-pattern can exist in $T_{\lambda}$, completing the proof.
\end{proof}

\subsection{Comparing excedance classes}
\label{sec:ex3}

In this section we give a proof of Theorem~\ref{thm:excedancequotient}.  This proof follows a final intermediate result; recall the order $\preceq$ on noncrossing partitions defined in Section~\ref{sec:bruhatballot}

\begin{prop}
\label{prop:Qdominates}
Let $\mu$ be a noncrossing partition of size $n$.  Then
\[
\{w \in S_{n} \;|\; w \le Q_{\mu}\} = \bigsqcup_{\lambda \preceq \mu} \CCC_{\lambda}.
\]
\end{prop}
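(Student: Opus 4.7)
The plan is to leverage the fact that $S_{n} = \bigsqcup_{\lambda \in \NCP_{n}} \CCC_{\lambda}$, so the identity will follow once we prove the class-level equivalence
\[
\CCC_{\lambda} \subseteq \{w \in S_{n} \;|\; w \le Q_{\mu}\} \qquad\text{if and only if}\qquad \lambda \preceq \mu.
\]

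The ``if'' direction will be immediate from the preceding infrastructure. Proposition~\ref{prop:QSVinterval} identifies $Q_{\lambda}$ as the Bruhat maximum of $\CCC_{\lambda}$, and Proposition~\ref{prop:QSVorderbijection} tells us $\lambda \preceq \mu \Leftrightarrow Q_{\lambda} \le Q_{\mu}$. So whenever $\lambda \preceq \mu$ and $w \in \CCC_{\lambda}$, we have $w \le Q_{\lambda} \le Q_{\mu}$.

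The ``only if'' direction is the substantive step, and the main obstacle: I need to show that any $w \le Q_{\mu}$ necessarily lies in $\CCC_{\lambda}$ for some $\lambda \preceq \mu$. My strategy is to translate the Bruhat inequality $w \le Q_{\mu}$, via the tableau criterion (Proposition~\ref{TableauCriterion}), into the two numerical comparisons of Corollary~\ref{cor:bruhatncphelper}. First I would observe that the tableau criterion implies that for each $1 \le k \le n$, the number of entries of $\TTT_{k}(w)$ exceeding $k$ is at most the number of entries of $\TTT_{k}(Q_{\mu})$ exceeding $k$, because if $\TTT_{k}(v) \le \TTT_{k}(w)$ entrywise then any entry of $\TTT_{k}(v)$ that exceeds $k$ forces the corresponding entry of $\TTT_{k}(w)$ to exceed $k$ as well. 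By the complementary (symmetric) count on each row, the same comparison holds for entries bounded by $k-1$, which translates to
\[
\big|\{i > k : w_{i} < k\}\big| \le \big|\{i > k : Q_{\mu}(i) < k\}\big|.
\]

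Applying Lemma~\ref{lem:tableautobruhat} to both $w \in \CCC_{\lambda}$ and $Q_{\mu} \in \CCC_{\mu}$ converts these two inequalities precisely into
\[
\big|\lambda^{+} \cap [k-1]\big| - \big|\lambda^{-} \cap [k]\big| \ \le\  \big|\mu^{+} \cap [k-1]\big| - \big|\mu^{-} \cap [k]\big|
\]
and
\[
\big|\lambda^{+} \cap [k]\big| - \big|\lambda^{-} \cap [k]\big|\  \le \  \big|\mu^{+} \cap [k]\big| - \big|\mu^{-} \cap [k]\big|,
\]
from which Corollary~\ref{cor:bruhatncphelper} delivers $\lambda \preceq \mu$. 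Finally, disjointness of the union on the right-hand side is automatic since the $\CCC_{\lambda}$ partition $S_{n}$, completing the proof.
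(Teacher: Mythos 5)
Your proof takes essentially the same route as the paper's: both directions are established the same way, with the ``if'' direction following from $w \le Q_{\lambda} \le Q_{\mu}$ via Propositions~\ref{prop:QSVinterval} and~\ref{prop:QSVorderbijection}, and the ``only if'' direction obtained by translating the tableau-criterion comparison $\TTT(w) \le \TTT(Q_{\mu})$ into the two counting statements of Lemma~\ref{lem:tableautobruhat} and then applying Corollary~\ref{cor:bruhatncphelper}. One small slip in the exposition: where you write ``the same comparison holds for entries bounded by $k-1$,'' the direction in fact reverses — entrywise $\TTT_{k}(w) \le \TTT_{k}(Q_{\mu})$ gives that the number of entries of $\TTT_{k}(w)$ that are at most $k-1$ is \emph{at least} (not at most) the corresponding count for $\TTT_{k}(Q_{\mu})$, since an entry of $\TTT_{k}(Q_{\mu})$ being $\le k-1$ forces the matching entry of $\TTT_{k}(w)$ to be $\le k-1$. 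The desired inequality $\big|\{i > k : w_{i} < k\}\big| \le \big|\{i > k : Q_{\mu}(i) < k\}\big|$ then comes from a \emph{second} complementation, passing from elements of $[k-1]$ present in $\TTT_{k}(\cdot)$ to those absent, which flips the direction once more. The conclusion is correct, but as written there are two compensating sign reversals that are worth spelling out explicitly.
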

\begin{proof}
To begin, assume that $\lambda \preceq \mu$, so that by Proposition~\ref{prop:QSVorderbijection}, $Q_{\lambda} \le Q_{\mu}$.  Proposition~\ref{prop:QSVinterval} states that $w \le Q_{\lambda}$ for any $w \in C_{\lambda}$, so by transitivity we have
\[
\{w \in S_{n} \;|\; w \le Q_{\mu}\} \supseteq \bigsqcup_{\lambda \preceq \mu} \CCC_{\lambda}.
\]

To see the opposite containment, suppose that $w$ is a permutation with $w \le Q_{\mu}$ and let $\lambda$ be the unique noncrossing partition for which $w \in \CCC_{\lambda}$.  
By Corollary~\ref{cor:bruhatncphelper}, the statement $\lambda \preceq \mu$ is equivalent to the inequalities
\[
\big|\lambda^{+} \cap [k-1]\big| - \big|\lambda^{-} \cap [k]\big| \  \le\   \big|\mu^{+} \cap [k-1]\big| - |\mu^{-} \cap [k]\big|
\]
and
\[
\big|\lambda^{+} \cap [k]\big| - \big|\lambda^{-} \cap [k]\big|\  \le \  \big|\mu^{+} \cap [k]\big| - \big|\mu^{-} \cap [k]\big|
\]
for all $1 \le k \le n$.  
We will establish this equivalent formulation of our claim using the characterization of each side given in Lemma~\ref{lem:tableautobruhat}.  
From the assumption that $w \le Q_{\mu}$, the tableau criterion (Proposition~\ref{TableauCriterion}) states that the tableau $\TTT(w)$ is entry-wise less than or equal to $\TTT(Q_{\mu})$.  
Thus, for each $1 \le k \le n$, 
\[
\big|\{w_{i} \;|\; \text{$1 \le i \le k$ and $w_{i} > k$} \}\big|
\ \le\ 
\big|\{Q_{\mu}(i) \;|\; \text{$1 \le i \le k$ and $Q_{\mu}(i) > k$} \}\big|,
\]
since each entry of $\TTT_{k}(w)$ is bounded above by the corresponding entry of $\TTT_{k}(Q_{\mu})$, and likewise
\[
\big|\{ w_{i} \;|\; \text{$n \ge i > k$ and $w_{i} < k$} \}\big| 
\ \le\ 
\big|\{ Q_{\mu}(i) \;|\; \text{$n \ge i > k$ and $Q_{\mu}(i) < k$} \}\big|,
\]
since each entry of $\TTT_{k}(Q_{\mu})$ is bounded below by the corresponding entry of $\TTT_{k}(w)$.  This completes the proof.
\end{proof}

We now prove Theorem~\ref{thm:excedancequotient}.  Recall the elements $T_{\lambda}$ defined in Section~\ref{sec:ex2}.

\begin{proof}[Proof of Theorem~\ref{thm:excedancequotient}]
The relation $\le$ on excedance classes is defined by:
\[
\CCC_{\lambda} \le \CCC_{\mu}
\qquad\text{if and only if}\qquad
\text{$v \le w$ for some $v \in \CCC_{\lambda}$ and $w \in \CCC_{\mu}$}.
\]
As Lemma~\ref{lem:QSVexcedance} states that $Q_{\lambda} \in \CCC_{\lambda}$ for each noncrossing partition $\lambda$, Proposition~\ref{prop:QSVorderbijection} implies that $\CCC_{\lambda} \le \CCC_{\mu}$ whenever $\lambda \preceq \mu$.  It is therefore sufficient to show the converse: $\CCC_{\lambda} \le \CCC_{\mu}$ only if $\lambda \preceq \mu$.  To this end, suppose that $v \le w$ for some $v \in \CCC_{\lambda}$ and $w \in \CCC_{\mu}$.  By Proposition~\ref{prop:QSVinterval}, $w \le Q_{\mu}$, so the transitivity of the Bruhat order on $S_{n}$ implies that $v \le Q_{\mu}$.   Proposition~\ref{prop:Qdominates} now implies that $\lambda \preceq \mu$, completing the proof.
\end{proof}

\section{Bases for the Temperley--Lieb Algebra $\TL_{n}(2)$}
\label{sec:TLbasis}

The Temperley--Lieb algebra $\TL_{n}(2)$ is the $\CC$-algebra generated by elements $e_{1}, \ldots, e_{n-1}$ subject to the following relations for each $1 \le i, j \le n$
\[
\begin{array}{rll}
e_{i}^{2} &= 2 e_{i} \\
e_{i}e_{j} &= e_{j}e_{i} & \text{if $|i - j| > 1$} \\
e_{i} e_{j} e_{i} &= e_{i} & \text{if $|i - j| = 1$}.
\end{array}
\]
There is a surjective algebra morphism from the symmetric group algebra $\CC S_{n}$ to $\TL_{n}(2)$ given by 
\[
\begin{array}{rcl}
\phi: \CC S_{n} & \longrightarrow & \TL_{n}(2) \\
s_{i} & \longmapsto & 1 - e_{i}.
\end{array}
\]
In particular $\TL_{n}(2)\cong S_n\big/\!\ker(\phi)$.

It is well-known that the images of all $321$-avoiding permutations under $\phi$ forms a basis for $\TL_{n}(2)$.  
Another basis, due to Zinno~\cite{Zinno} can be obtained via the map $\phi$ using the combinatorics of noncrossing partitions described in Remark~\ref{rem:QSVnoncrossing}.  This basis is precisely the set $\{\phi(\omega_0w\omega_0) \;|\; w \in \QSV_{n}\}$.  Since the kernel
\[
\ker(\phi) = \langle (13) - (123) - (132) + (12) + (23) - e \rangle
\]
is invariant under the $\CC$-linear map $w \mapsto \omega_0w\omega_0$ (as in Remark~\ref{rem:auto}), the following result is an immediate consequence of~\cite[Theorem 2]{Zinno}.

\begin{thm}
\label{thm:TLbasis}
For all $n \ge 0$, the set $\phi(\QSV_{n})$ is a basis for $\TL_{n}(2)$.
\end{thm}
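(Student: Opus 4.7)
The plan is to obtain this result as a short corollary of Zinno's theorem~\cite[Theorem 2]{Zinno}, which furnishes the companion basis $\{\phi(\omega_0 w \omega_0) \mid w \in \QSV_n\}$ of $\TL_n(2)$. The bridge between the two bases is the conjugation automorphism of $\CC S_n$ determined by $\omega_0$, so my strategy is to verify that this conjugation descends to $\TL_n(2)$ and then transport Zinno's basis.

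Concretely, I would extend the set-map $w \mapsto \omega_0 w \omega_0$ linearly to a $\CC$-algebra automorphism $\psi$ of $\CC S_n$; it is an involution because $\omega_0^2 = e$. The crucial step, flagged in the preamble to the theorem, is that $\psi$ preserves $\ker(\phi)$. Given the explicit generator of $\ker(\phi)$ recorded in the excerpt, this reduces to a direct check: conjugation by $\omega_0$ fixes $(13)$ and the identity while interchanging $(12) \leftrightarrow (23)$ and $(123) \leftrightarrow (132)$, so the generator is sent to itself (and likewise for the shifted copies of this generator corresponding to each embedded $S_3 \hookrightarrow S_n$). Conceptually, as discussed in Remark~\ref{rem:auto}(1), this invariance is exactly the symmetry interchanging the two conventional parametrizations of noncrossing partitions by an $n$-cycle, namely $(n\cdots 21)$ used here versus $(1\,2\cdots n)$ used in~\cite{GobetWilliams,Zinno}.

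Once $\psi$ is known to preserve $\ker(\phi)$, it descends to an algebra automorphism $\bar\psi$ of $\TL_n(2)$ characterized by $\bar\psi \circ \phi = \phi \circ \psi$, and $\bar\psi$ is itself an involution. Hence $\bar\psi\bigl(\phi(\omega_0 w \omega_0)\bigr) = \phi(w)$ for every $w \in \QSV_n$, and because automorphisms carry bases to bases,
\[
\phi(\QSV_n) \;=\; \bar\psi\bigl(\{\phi(\omega_0 w \omega_0) \mid w \in \QSV_n\}\bigr)
\]
is a basis of $\TL_n(2)$, as desired. There is no substantial obstacle beyond reconciling the conventions of~\cite{Zinno} with ours; the kernel-invariance check is essentially the only computation required, and even that is forced by the stated form of the generator of $\ker(\phi)$.
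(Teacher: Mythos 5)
Your proposal is correct and is essentially the paper's proof: cite Zinno's theorem for the basis $\{\phi(\omega_0 w \omega_0) \mid w \in \QSV_n\}$, observe that conjugation by $\omega_0$ preserves $\ker(\phi)$, and transport Zinno's basis through the induced automorphism of $\TL_n(2)$. One small caution on your verification step: for $n > 3$ the long element $\omega_0 \in S_n$ does \emph{not} fix $(13)$ (it sends $(13)$ to $(n-2,\,n)$), so the stated generator of $\ker(\phi)$ is carried to its shifted copy supported on $\{n-2,n-1,n\}$ rather than to itself; the conclusion still holds, and in fact no computation is needed at all, since conjugation by any element of $S_n$ is an inner automorphism of $\CC S_n$ and hence preserves every two-sided ideal, in particular $\ker(\phi)$.
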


\begin{rem}
Theorem~\ref{thm:TLbasis} also follows from the work of Gobet and Williams in~\cite{GobetWilliams}: their results imply a  generalization of Zinno's in which each Coxeter element of $S_{n}$ determines a basis of $\TL_{n}(2)$ under $\phi$ (see also Remark~\ref{rem:QSVnoncrossing}).  
However,~\cite{GobetWilliams} does not explicitly state this result about bases, so the theorem is more easily deduced from Zinno's earlier work.
\end{rem}

\subsection{More Bases for the Temperley--Lieb Algebra $\TL_{n}(2)$}

In our investigation of excedance classes we found an application of their structure the problem of computing sets of permutations which give bases of $\TL_{n}(2)$ under $\phi$.  
We include it here with its proof as it is a nice result of our current investigation.

\begin{thm}
\label{thm:TLbases}
Let $n \ge 0$ and for each noncrossing partition $\lambda$ of size $n$, fix an element $w_{\lambda} \in \CCC_{\lambda}$. Then the set $\{\phi(w_{\lambda}) \;|\; \text{noncrossing partitions $\lambda$}\}$ is a basis of $\TL_{n}(2)$.
\end{thm}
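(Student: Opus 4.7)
The plan is to exhibit a unitriangular change of basis between $\{\phi(w_\lambda)\}_\lambda$ and the $321$-avoiding basis $\{\phi(T_\lambda) : \lambda \in \NCP_n\}$ of $\TL_n(2)$; this is a basis because $T_\lambda$ is $321$-avoiding by Proposition~\ref{prop:321avoid} and $|\NCP_n| = C_n = \dim \TL_n(2)$ coincides with the number of $321$-avoiding permutations. Since both candidate bases have the same cardinality, any such triangular change of basis completes the proof.

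The central claim is that for every $\lambda \in \NCP_n$ and every $w \in \CCC_\lambda$,
\[
\phi(w) = \phi(T_\lambda) + \sum_{\mu \prec \lambda} b^{w}_{\mu}\, \phi(T_\mu).
\]
Granting this, the transition matrix from $\{\phi(w_\lambda)\}$ to $\{\phi(T_\lambda)\}$ is unitriangular relative to any linear extension of $\preceq$, hence invertible, and $\{\phi(w_\lambda)\}$ is a basis of $\TL_n(2)$.

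The claim splits into a support assertion and a diagonal assertion. For the \emph{support} statement $b^w_\mu = 0$ whenever $\mu \not\preceq \lambda$, I will invoke the classical fact that the $321$-avoiding expansion of $\phi(w)$ is supported on $\{\phi(T_\mu) : T_\mu \le w\}$ (a consequence of the cellular structure of $\TL_n(2)$), combined with $w \le Q_\lambda$ (Proposition~\ref{prop:QSVinterval}) and Proposition~\ref{prop:Qdominates}, which together force any $T_\mu \le w$ to satisfy $\mu \preceq \lambda$. The \emph{diagonal} assertion (that the coefficient of $\phi(T_\lambda)$ is exactly $1$) is the main obstacle, and I would establish it by induction on $\ell(w) - \ell(T_\lambda)$ within the Bruhat interval $\CCC_\lambda = [T_\lambda, Q_\lambda]$ guaranteed by Corollary~\ref{cor:interval}: write $w = s_i v$ for $v \in \CCC_\lambda$ with $\ell(v) < \ell(w)$, so that $\phi(w) = \phi(v) - e_i \phi(v)$; the inductive hypothesis gives the $\phi(T_\lambda)$-coefficient of $\phi(v)$ as $1$, so the remaining work is to show $e_i \phi(v)$ contributes nothing to the $\phi(T_\lambda)$-coefficient. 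This last step amounts to showing that multiplication by the generators $e_i$ strictly lowers the $\preceq$-filtration of $\TL_n(2)$ indexed by the cellular structure, and is the principal technical challenge of the proof.
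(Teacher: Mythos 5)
Your overall strategy---establishing a unitriangular transition from $\{\phi(w_\lambda)\}$ to the $321$-avoiding basis $\{\phi(T_\lambda)\}$ with respect to $\preceq$---coincides with the paper's (Proposition~\ref{prop:TLbases}), and your support argument (combining $w \le Q_\lambda$ with Proposition~\ref{prop:Qdominates}) is essentially correct. The diagonal assertion, however, has a genuine gap, and you have correctly identified it as the crux.

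The proposed inductive step is circular as written. From $w = s_i v$ we get $e_i\phi(v) = \phi(v) - \phi(s_iv) = \phi(v) - \phi(w)$, so the identity $\phi(w) = \phi(v) - e_i\phi(v)$ merely restates the problem: showing that $e_i\phi(v)$ has no $\phi(T_\lambda)$-term is by definition showing that $\phi(w)$ has the same $\phi(T_\lambda)$-coefficient as $\phi(v)$. Breaking the circle requires computing $e_i\phi(v)$ from the inductive expansion $\phi(v) = \phi(T_\lambda) + \sum_{\mu \prec \lambda} a_\mu\phi(T_\mu)$ using the structure constants for left multiplication by $e_i$ on the $321$-avoiding basis, but your conjectured filtration property fails even at the first term: $e_i\phi(T_\lambda) = \phi(T_\lambda) - \phi(s_iT_\lambda)$, and the $\phi(T_\lambda)$-coefficient of $\phi(s_iT_\lambda)$ is exactly the quantity one is trying to determine (this is precisely the base case $v = T_\lambda$). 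So multiplication by $e_i$ does \emph{not} strictly lower the $\preceq$-indexed filtration, and no amount of massaging makes the proposed induction run without a new input. You would also need to justify---and it is not obvious---that every non-minimal $w \in \CCC_\lambda$ admits a left descent $s_i$ with $s_iw \in \CCC_\lambda$; Bruhat covers use arbitrary transpositions, and $s_iw$ can leave the excedance class.

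The paper supplies the missing ingredient with Lemma~\ref{lem:321excedance}: every non-$321$-avoiding $w$ has a $321$-pattern $i < j < k$ with $i \in \EP(w)$ and $k \notin \EP(w)$, and among the five permutations on the right-hand side of the rewriting relation~\eqref{eq:321relation}, \emph{exactly one} remains in $\CCC_\lambda$, and that one carries coefficient $+1$. All five are strictly below $w$ in Bruhat order (Lemma~\ref{lem:321reducebruhat}), and since $w \le Q_\lambda$, Proposition~\ref{prop:Qdominates} places the other four in classes $\CCC_\mu$ with $\mu \prec \lambda$; a straight Bruhat induction then yields the unitriangularity. That combinatorial control over the five-term relation is what your argument lacks.
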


A proof of Theorem~\ref{thm:TLbases} is given in Section~\ref{sec:basistheoremproof}.  Here, we discuss its implications: taking $w_{\lambda} = Q_{\lambda}$ in the theorem gives yet another proof of Theorem~\ref{thm:TLbasis}, confirming the results of~\cite{GobetWilliams} and~\cite{Zinno} discussed above.  In general, however, many bases obtained via Theorem~\ref{thm:TLbases} are novel.  The smallest novel example can be found with $n = 4$: the set 
\[
\{
\stackrel{1}{4} \stackrel{2}{3} \color{red} \stackrel{3}{1} \stackrel{4}{2} \color{black}, 
\stackrel{1}{4} \stackrel{2}{2} \stackrel{3}{3}  \color{red} \stackrel{4}{1} \color{black}, 
\stackrel{1}{4} \stackrel{2}{2}\color{red}  \stackrel{3}{1} \stackrel{4}{3} \color{black}, 
\stackrel{1}{3} \color{red}  \stackrel{2}{1} \color{black} \stackrel{3}{4} \color{red} \stackrel{4}{2}\color{black} , 
\stackrel{1}{1} \stackrel{2}{4} \stackrel{3}{3} \color{red} \stackrel{4}{2} \color{black}, 
\stackrel{1}{4} \color{red} \stackrel{2}{1} \stackrel{3}{2} \stackrel{4}{3} \color{black}, 
\stackrel{1}{3} \stackrel{2}{2} \color{red} \stackrel{3}{1} \color{black} \stackrel{4}{4}, 
\stackrel{1}{3} \color{red} \stackrel{2}{1} \stackrel{3}{2} \color{black} \stackrel{4}{4}, 
\stackrel{1}{2} \color{red} \stackrel{2}{1} \color{black} \stackrel{3}{4} \color{red} \stackrel{4}{3} \color{black}, 
\stackrel{1}{1} \stackrel{2}{3} \color{red} \stackrel{3}{2} \stackrel{4}{3} \color{black}, 
\stackrel{1}{2}  \color{red} \stackrel{2}{1} \color{black} \stackrel{3}{3} \stackrel{4}{4}, 
\stackrel{1}{1} \stackrel{2}{3} \color{red} \stackrel{3}{2} \color{black} \stackrel{4}{4}, 
\stackrel{1}{1} \stackrel{2}{2} \stackrel{3}{4}  \color{red} \stackrel{4}{3} \color{black},
\stackrel{1}{1} \stackrel{2}{2} \stackrel{3}{3} \stackrel{4}{4}
\}
\]
meets the criteria of Theorem~\ref{thm:TLbases}, and accordingly maps to a basis of $\TL_{n}(2)$ under $\phi$. 
This set is neither $\QSV_{4}$ nor the set of $321$-avoiding permutations ($4312 \notin \QSV_{4}$ and is not $321$-avoiding).
Moreover, the set above is not described in~\cite{GobetWilliams, Zinno}: each subset of $S_{4}$ in these sources which is not $\QSV_{4}$ contains more than one element from certain excedance classes and none from others.  
 
\begin{rem}
Like other results in this paper, Theorem~\ref{thm:TLbases} implies a similar statement involving the alternate excedance classes in Remark~\ref{rem:altECs} (1).  This statement should give even more bases of $\TL_{n}(2)$, and is obtained by conjugating all permutations in the theorem by $\omega_0$.
\end{rem}

\subsection{A presentation of the Temperley--Lieb algebra}
\label{sec:TLpres}

In this section we describe the Temperley--Lieb algebra $\TL_{n}(2)$ as the quotient of the symmetric group algebra $\CC S_{n}$ by the kernel of the map $\phi$.  
To begin, recall that a $321$-pattern in a permutation $w \in S_{n}$ is a triple $i < j < k$ for which $w_{i} > w_{j} > w_{k}$.  Given such a pattern, we can write
\[
w = \mathbf{a}w_{i}\mathbf{b}w_{j}\mathbf{c}w_{k}\mathbf{d},
\]
where $\mathbf{a}$, $\mathbf{b}$, $\mathbf{c}$, and $\mathbf{d}$ are (possibly empty) subwords of $w$ in one-line notation.  The quotient $\CC S_{n} / \ker(\phi)$ is then defined by the relations
\begin{multline}
\label{eq:321relation}
w \equiv \mathbf{a}w_{j}\mathbf{b}w_{i}\mathbf{c}w_{k}\mathbf{d} + \mathbf{a}w_{i}\mathbf{b}w_{k}\mathbf{c}w_{j}\mathbf{d} - \mathbf{a}w_{j}\mathbf{b}w_{k}\mathbf{c}w_{i}\mathbf{d} \\
- \mathbf{a}w_{k}\mathbf{b}w_{i}\mathbf{c}w_{j}\mathbf{d} + \mathbf{a}w_{k}\mathbf{b}w_{j}\mathbf{c}w_{i}\mathbf{d}  \qquad \text{(mod $\ker(\phi)$)}
\end{multline}
for each $321$-pattern in each permutation $w \in S_{n}$.

\begin{lem}
\label{lem:321reducebruhat}
Let $w \in S_{n}$ be a permutation with a $321$-pattern in positions $i < j < k$.  Then $w > w'$ for each permutation $w'$ in the set
\[
\{
\mathbf{a}w_{j}\mathbf{b}w_{i}\mathbf{c}w_{k}\mathbf{d},  \,
\mathbf{a}w_{i}\mathbf{b}w_{k}\mathbf{c}w_{j}\mathbf{d}, \,
\mathbf{a}w_{j}\mathbf{b}w_{k}\mathbf{c}w_{i}\mathbf{d}, \,
\mathbf{a}w_{k}\mathbf{b}w_{i}\mathbf{c}w_{j}\mathbf{d}, \,
\mathbf{a}w_{k}\mathbf{b}w_{j}\mathbf{c}w_{i}\mathbf{d}
\}
\]
\end{lem}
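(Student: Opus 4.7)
The plan is to verify the tableau criterion (Proposition~\ref{TableauCriterion}) directly. I will show that $\TTT_r(w) \geq \TTT_r(w')$ entry-wise for every $r$, after which $w \neq w'$ upgrades this to strict inequality $w > w'$. The guiding intuition is that $w$ places the three values $w_i > w_j > w_k$ in \emph{decreasing} order across positions $i < j < k$, and every other rearrangement of those three values at the same three positions shifts large entries to the right, weakly decreasing the sorted prefix rows.

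Since $w$ and any $w'$ in the given set agree outside the positions $\{i,j,k\}$, the multisets $\{w_1,\ldots,w_r\}$ and $\{w'_1,\ldots,w'_r\}$ can differ only in their contributions from $\{i,j,k\}\cap[r]$. I would split into three ranges. For $r < i$ and $r \geq k$, the multisets coincide and the rows are equal. For $i \leq r < j$, only position $i$ contributes to the symmetric difference, and $w_i = \max\{w_i,w_j,w_k\} \geq w'_i$; replacing a single element of a multiset by a smaller one weakly decreases every entry of its sorted sequence, so $\TTT_r(w) \geq \TTT_r(w')$. For $j \leq r < k$, positions $i$ and $j$ together contribute the sorted pair $(w_j, w_i)$ in $w$, while for each of the five choices of $w'$ the pair at positions $i, j$ is drawn from $\{w_i,w_j\}$, $\{w_i,w_k\}$, $\{w_j,w_k\}$, $\{w_k,w_i\}$, or $\{w_k,w_j\}$; in every case, the sorted form is componentwise $\leq (w_j, w_i)$ using $w_k < w_j < w_i$. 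A straightforward exchange argument (replace the two participating elements one at a time) lifts this two-element comparison to full sorted rows, giving $\TTT_r(w) \geq \TTT_r(w')$.

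Combining the three cases with Proposition~\ref{TableauCriterion} yields $w \geq w'$, and since $w \neq w'$ we conclude $w > w'$. The only real work is the two-element comparison in the middle range $j \leq r < k$, which is a short case check over the five listed permutations, and in each instance the required inequality follows immediately from $w_k < w_j < w_i$.
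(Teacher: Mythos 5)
Your proof is correct, and it takes a genuinely different route from the paper's. The paper picks a single representative case, computes $w(w')^{-1}=(w_j\,w_i)$, observes that every inversion of $w'$ is an inversion of $w$ while $(i,j)$ is an extra inversion of $w$, and then invokes the defining covering relation of the Bruhat order (transposition plus length drop); the remaining cases are dismissed as ``similar.'' You instead apply the tableau criterion (Proposition~\ref{TableauCriterion}) directly, partitioning $r$ into the ranges $r<i$, $i\le r<j$, $j\le r<k$, and $r\ge k$, and comparing the sorted prefix rows. The comparison reduces to the observation that in both the one-element range (position $i$ only) and the two-element range (positions $i,j$), the values used by $w$ sit weakly above those used by any $w'$, which then lifts to the full sorted rows by a one-at-a-time replacement. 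One advantage of your route is that it handles all five rearrangements uniformly within the same framework; in particular the two ``double-swap'' cases, where $w(w')^{-1}$ is a $3$-cycle rather than a transposition, require no special treatment, whereas the paper's inversion-counting argument is only literally a covering relation in the transposition cases and requires a bit more care (decomposing the $3$-cycle into a chain of transpositions) to finish cleanly. The paper's approach is arguably more elementary and avoids quoting the tableau criterion, but yours is closer to being a complete case-free argument.
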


The lemma is well known, but we sketch a proof for the sake of completeness.

\begin{proof}
We consider the case $w'=\mathbf{a}w_{j}\mathbf{b}w_{i}\mathbf{c}w_{k}\mathbf{d}$, with all other cases following from a similar argument.  
Direct computation shows that $w(w')^{-1}= (w_j\,w_i)$, which is a transposition. 
It is then straightforward to verify that every inversion of $w'$ is an inversion on $w$, and moreover that $w$ has at least one inversion that $w'$ does not: $i < j$ and $w_{i} > w_{j}$.  We therefore conclude that $\ell(w')<\ell(w)$, so that $w' < w$.  
\end{proof}

\subsection{Proof of Theorem~\ref{thm:TLbases}}
\label{sec:basistheoremproof}

This section proves Theorem~\ref{thm:TLbases}, which follows from the triangularity established by the next result.  
Recall the $321$-avoiding permutations $T_{\lambda}$ defined in Section~\ref{sec:ex2} for each noncrossing partition $\lambda \in \NCP_{n}$.  
Further note that the set of all $321$-avoiding permutations in $S_{n}$ form a basis of $\TL_{n}(2)$ under $\phi$; this can be deduced (for example from Equation~\eqref{eq:321relation} and Lemma~\ref{lem:321reducebruhat}.  

\begin{prop}
\label{prop:TLbases}
Let $\lambda$ be a noncrossing partition of size $n$.  For each $w \in \CCC_{\lambda}$, we have
\[
w \equiv T_{\lambda} + \sum_{\mu \prec \lambda} a_{\mu}^{w} T_{\mu} \qquad \text{(mod $\ker(\phi)$)}
\]
for some coefficients $a_{\mu}^{w} \in \ZZ$.
\end{prop}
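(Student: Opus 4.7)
The plan is to proceed by strong induction on the Bruhat length $\ell(w)$ for $w\in\CCC_\lambda$. For the base case, Corollary~\ref{cor:interval} identifies $T_\lambda$ as the Bruhat-minimum of $\CCC_\lambda$ and Proposition~\ref{prop:321avoid} identifies it as the unique $321$-avoiding element of $\CCC_\lambda$; when $w=T_\lambda$, the desired expansion $w \equiv T_\lambda \pmod{\ker\phi}$ is immediate.

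For the inductive step, take $w\in\CCC_\lambda$ with $w\ne T_\lambda$. Since $T_\lambda$ is the only $321$-avoiding element of $\CCC_\lambda$, $w$ contains some $321$-pattern, which I would fix at positions $i<j<k$ with values $a=w_i>b=w_j>c=w_k$. Equation~\eqref{eq:321relation} then rewrites $w$ modulo $\ker\phi$ as a signed $\ZZ$-combination of five replacements $w^{(1)},\dots,w^{(5)}$ with signs $\epsilon_\ell\in\{+1,+1,-1,-1,+1\}$, each of which is strictly Bruhat-less than $w$ by Lemma~\ref{lem:321reducebruhat}. Since $w\le Q_\lambda$ by Proposition~\ref{prop:QSVinterval}, each $w^{(\ell)}<Q_\lambda$, and Proposition~\ref{prop:Qdominates} places $w^{(\ell)}\in\CCC_{\nu_\ell}$ for some $\nu_\ell\preceq\lambda$. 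Applying the inductive hypothesis to each $w^{(\ell)}$ and collecting terms produces an expansion of $w$ with integer coefficients whose coefficient of $T_\lambda$ equals the signed sum $\sigma_\lambda(w) := \sum\{\epsilon_\ell\colon \nu_\ell=\lambda\}$; everything else is absorbed into the $\sum_{\mu\prec\lambda}a_\mu^w T_\mu$ tail.

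The main obstacle is to verify $\sigma_\lambda(w)=1$ in every case. I would carry this out by a case analysis on the excedance pattern $(\mathbf{1}_{a\ge i},\mathbf{1}_{b\ge j},\mathbf{1}_{c\ge k})$ of $w$ at positions $(i,j,k)$. The inequalities $a>b>c$ and $i<j<k$ force this pattern into one of four possibilities: $(1,1,1)$, $(1,1,0)$, $(1,0,0)$, or $(0,0,0)$. In the uniform extremes $(1,1,1)$ and $(0,0,0)$, every one of $a,b,c$ lies on the same side of every one of $i,j,k$, so any permutation of the three values leaves both $\EP$ and $\EV$ unchanged; all five replacements then lie in $\CCC_\lambda$ and $\sigma_\lambda(w)=+1+1-1-1+1=1$. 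In each mixed pattern I expect to find that exactly one of the five replacements---namely $(b,a,c)$ for $(1,1,0)$ and $(a,c,b)$ for $(1,0,0)$---preserves both $\EP$ and $\EV$ at positions $\{i,j,k\}$, and this replacement enters \eqref{eq:321relation} with coefficient $+1$; the remaining four replacements alter either the excedance positions in $\{i,j,k\}$ or (in the subcases where $\EP$ happens to coincide) the set of excedance values there, so they pass into strictly smaller excedance classes and contribute only to the $T_\mu$ with $\mu\prec\lambda$. A short sub-case analysis comparing each of $a,b,c$ to each of $i,j,k$ confirms this and closes the induction.
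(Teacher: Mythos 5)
Your proposal is correct and follows the same overall induction strategy as the paper, but it departs in an interesting way at the key step. The paper's proof leans on its Lemma~\ref{lem:321excedance}, which has two jobs: it first shows that $w$ always has a $321$-pattern $i<j<k$ with $i\in\EP(w)$ and $k\notin\EP(w)$ (your patterns $(1,1,0)$ and $(1,0,0)$), and then shows that for such a pattern exactly one of the five replacements remains in $\CCC_\lambda$ and it is the one carrying coefficient $+1$. You instead allow an \emph{arbitrary} $321$-pattern and observe that the excedance pattern $(\mathbf{1}_{a\ge i},\mathbf{1}_{b\ge j},\mathbf{1}_{c\ge k})$ is always weakly decreasing, hence one of $(1,1,1),(1,1,0),(1,0,0),(0,0,0)$; in the two extreme cases all five replacements stay in $\CCC_\lambda$ and the coefficient sum $+1+1-1-1+1=1$ rescues the argument, while in the two mixed cases you recover the paper's ``exactly one replacement with coefficient $+1$'' conclusion. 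I checked your claims: in $(1,1,1)$ all three values $a>b>c\ge k>j>i$ are uniformly weak excedances under any rearrangement, in $(0,0,0)$ they are uniformly non-excedances, and in the mixed cases only the single replacement you name preserves both $\EP$ and $\EV$ on $\{i,j,k\}$ (the others either change $\EP\cap\{i,j,k\}$ or, when that is preserved, swap which values lie in $\EV$). The trade-off is clean: you avoid proving that a well-behaved $321$-pattern exists, at the cost of the extra uniform cases; the paper avoids the extra cases at the cost of the existence sublemma. One stylistic caveat: your ``I expect to find'' is a placeholder --- the sub-case verification for $(1,1,0)$ and $(1,0,0)$ is elementary but does need to be written out to finish the argument, and when doing so you should also note explicitly that the ``strictly smaller'' conclusion for the discarded replacements comes from combining Lemma~\ref{lem:321reducebruhat} (they sit strictly below $w\le Q_\lambda$) with Proposition~\ref{prop:Qdominates} (hence lie in some $\CCC_\nu$ with $\nu\preceq\lambda$) and the fact that they are not in $\CCC_\lambda$.
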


A proof of Proposition~\ref{prop:TLbases} follows the next lemma.

\begin{lem}
\label{lem:321excedance}
Suppose that $w \in S_{n}$ has a $321$-pattern.  Then $w$ has a $321$-pattern $i < j < k$ with $i \in \EP(w)$ and $k \notin \EP(w)$.  Moreover exactly one element of the set 
\[
\{
\mathbf{a}w_{j}\mathbf{b}w_{i}\mathbf{c}w_{k}\mathbf{d},  \,
\mathbf{a}w_{i}\mathbf{b}w_{k}\mathbf{c}w_{j}\mathbf{d}, \,
\mathbf{a}w_{j}\mathbf{b}w_{k}\mathbf{c}w_{i}\mathbf{d}, \,
\mathbf{a}w_{k}\mathbf{b}w_{i}\mathbf{c}w_{j}\mathbf{d}, \,
\mathbf{a}w_{k}\mathbf{b}w_{j}\mathbf{c}w_{i}\mathbf{d}
\}
\]
belongs to the same excedance class as $w$.
\end{lem}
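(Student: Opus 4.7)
The plan is to split the proof into two parts matching the two assertions of the lemma, using a specific extremal $321$-pattern throughout.
Among all $321$-patterns of $w$, I take one $(i, j, k)$ that first maximizes $w_{i}$ and then, subject to this, minimizes $w_{k}$.
Both properties $i \in \EP(w)$ and $k \notin \EP(w)$ will follow from extremality, as I sketch below.

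For $i \in \EP(w)$, assume $w_{i} < i$.  The $n - w_{i}$ values exceeding $w_{i}$ cannot all lie strictly right of position $i$ (there are only $n - i < n - w_{i}$ such positions), so some $p < i$ satisfies $w_{p} > w_{i}$.  Then $(p, i, j)$ is a $321$-pattern with first-position value exceeding $w_{i}$, contradicting maximality.
For $k \notin \EP(w)$, assume $w_{k} \ge k$.  For each $v \in \{1, \ldots, k - 1\}$ the position $q_{v} = w^{-1}(v)$ cannot equal $j$ (since $w_{j} > w_{k} \ge k > v$) and cannot exceed $j$ (else $(i, j, q_{v})$ would be a $321$-pattern with final value $v < w_{k}$, contradicting the minimality of $w_{k}$).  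Hence $q_{v} < j$ for every such $v$, forcing $k - 1$ distinct values into $j - 1 < k - 1$ positions---a contradiction.

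For the second assertion, write $a = w_{i}$, $b = w_{j}$, $c = w_{k}$, so $a > b > c$.  Each permutation agreeing with $w$ outside positions $\{i, j, k\}$ is determined by a bijection $M \colon \{i, j, k\} \to \{a, b, c\}$, and the resulting $w'$ lies in the same excedance class as $w$ iff $M$ restricts to bijections $P \to V$ and on the respective complements, with $M(p) \ge p$ on $P$ and $M(p) < p$ off $P$, where $P = \EP(w) \cap \{i, j, k\}$ and $V = \EV(w) \cap \{a, b, c\}$.  Since $i \in P$ and $k \notin P$, I split on whether $j \in \EP(w)$.  If $j \in \EP(w)$ ($P = \{i, j\}$, $V = \{a, b\}$), then $M(k) = c$ is forced and both bijections $\{i, j\} \to \{a, b\}$ satisfy the inequalities (using $b \ge j > i$ and $a > b \ge j$), so $w$ and $\mathbf{a} w_{j} \mathbf{b} w_{i} \mathbf{c} w_{k} \mathbf{d}$ are the only arrangements in the class.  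If $j \notin \EP(w)$ ($P = \{i\}$, $V = \{a\}$), then $M(i) = a$ is forced and both bijections $\{j, k\} \to \{b, c\}$ satisfy the inequalities (using $c < b < j$ and $b < j < k$), so $w$ and $\mathbf{a} w_{i} \mathbf{b} w_{k} \mathbf{c} w_{j} \mathbf{d}$ are the only arrangements.  Either way, exactly one of the five listed alternatives shares an excedance class with $w$.

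I expect the main difficulty to be the $k \notin \EP(w)$ step, where the pigeonhole bookkeeping needs to close exactly; by contrast, the conceptual observation that $\EP(w') = \EP(w)$ and $\EV(w') = \EV(w)$ together reduce to a single matching condition on $M$ turns the uniqueness step into a short, mechanical enumeration.
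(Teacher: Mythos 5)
Your proof is correct, with a genuinely different argument in the existence step.  The paper starts from an \emph{arbitrary} $321$-pattern $(i_0,j_0,k_0)$ and repairs it: if $i_0 \in \EP(w)$ but also $k_0 \in \EP(w)$, a count on $w^{-1}([k_0-1])$ produces a replacement $k > k_0$ with $w_k < k_0$; if $i_0 \notin \EP(w)$ (forcing $k_0 \notin \EP(w)$), a dual count on $w([i_0-1])$ produces a replacement $i < i_0$ with $w_i > i_0$.  You instead commit up front to the \emph{extremal} $321$-pattern that maximizes $w_i$ and then minimizes $w_k$, and show by two direct pigeonhole contradictions that it already has the required excedance structure.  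Both arguments rest on similar counting, but yours is a one-shot extremal choice rather than an iterative repair, and it reads a bit more compactly at the cost of being non-constructive.  For the uniqueness claim, both proofs split on whether $j \in \EP(w)$; your reformulation in terms of a matching $M \colon \{i,j,k\} \to \{a,b,c\}$ that must carry $P = \EP(w) \cap \{i,j,k\}$ to $V = \EV(w) \cap \{a,b,c\}$ while respecting the weak-excedance inequalities turns the count of valid arrangements into an immediate enumeration, whereas the paper verifies the one good arrangement and rules out the remaining four by inspecting the value placed in position $k$ --- logically equivalent, just packaged a bit more systematically on your end.
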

\begin{proof}
By assumption, $w$ has a $321$-pattern, which we will denote by $i_{0} < j_{0} < k_{0}$.
For the first claim, we proceed in cases based on $i_{0}$ and $k_{0}$.  
If $i_{0} \in \EP(w)$, then either $k_{0} \notin \EP(w)$, in which case we have the desired $321$-pattern, or $k_{0} \in \EP(w)$, in which case we have $i_{0} < j_{0} < k_{0} \le w_{k_{0}} < w_{j_{0}} < w_{i_{0}}$.  
Proceeding with the assumption that $k_{0} \in \EP(w)$, the set $w^{-1}([k_{0}-1])$ does not contain $i_{0}$, $j_{0}$, or $k_{0}$, so there is at least one $k > k_{0}$ with $w_{k} < k_{0}$, and $i_{0} < j_{0} < k$ is a $321$-pattern for which $i_{0} \in \EP(w)$ and $k \notin\EP(w)$.  
On the other hand, if $i_{0} \notin \EP(w)$ then $w_{k_{0}} < w_{j_{0}} < w_{i_{0}} < i_{0} < j_{0} < k_{0}$, so that $k_{0} \notin \EP(w)$.  
In this case $w([i_{0} - 1])$ does not contain $w_{i_{0}}$, $w_{j_{0}}$, or $w_{k_{0}}$, so there is at least one $i < i_{0}$ for which $w_{i} > i_{0}$, and $i < j_{0} < k_{0}$ is a $321$-pattern for which $i \in \EP(w)$ and $k_{0} \notin\EP(w)$.  

For the second claim, we write $i < j < k$ for the $321$-pattern described in the first claim.  There are two cases, depending on whether $j$ is an excedance position or not, and we will only consider the first case, as the second follows from a similar argument.  Assuming that $j \in \EP(w)$, we first show that with $w' = \mathbf{a}w_{j}\mathbf{b}w_{i}\mathbf{c}w_{k}\mathbf{d}$, 
\[
\EP(w') = \EP(w)
\qquad\text{and}\qquad
\EV(w') = \EV(w).
\]
For all $s \in [n] \setminus \{i, j\}$, we have $w'_{i} = w_{i}$, so the above statement reduces to $i, j \in \EP(w')$ and $w_{i}, w_{j} \in \EV(w')$; to see this, observe that $i < j \le w_{j} < w_{i}$, and so $w'_{i} = w_{j} > i$ and $w'_{j} = w_{i} > j$.  
To complete the proof, we verify that each element $w''$ in the set
\[
\{
\mathbf{a}w_{i}\mathbf{b}w_{k}\mathbf{c}w_{j}\mathbf{d}, \,
\mathbf{a}w_{j}\mathbf{b}w_{k}\mathbf{c}w_{i}\mathbf{d}, \,
\mathbf{a}w_{k}\mathbf{b}w_{i}\mathbf{c}w_{j}\mathbf{d}, \,
\mathbf{a}w_{k}\mathbf{b}w_{j}\mathbf{c}w_{i}\mathbf{d}
\}
\]
belongs to a difference excedance class than $w$: either $k \le w''_{k}$, in which case $\EP(w'') \neq \EP(w)$, or $w''_{k} \in \{w_{i}, w_{j}\}$ is a not excedance value for $w''$, in which case $\EV(w'') \neq \EV(w)$.  
\end{proof}

\begin{proof}[Proof of Proposition~\ref{prop:TLbases}]
We proceed by induction on the Bruhat order of $S_{n}$.  If $w$ is $321$-avoiding, then $w = T_{\lambda}$ and the claim clearly holds.  
If $w$ is not $321$-avoiding, we have $T_{\lambda} < w$, so we assume for the sake of induction that for each $v < w$ the claim holds.  
As $w$ has a $321$-pattern, Equation~\eqref{eq:321relation}, Lemma~\ref{lem:321reducebruhat}, and Lemma~\ref{lem:321excedance} allow us to express
\[
w \equiv w' + \sum_{\substack{w'' \le w \\ w'' \notin \CCC_{\lambda} }} b_{w''} w'' \qquad \text{(mod $\ker(\phi)$)},
\]
where $w' < w$ is an element of $\CCC_{\lambda}$, and each coefficient $b_{w''}$ belongs to $\{1, 0, -1\}$.  
We may now apply the inductive hypothesis to the elements $w'$ and $w''$ in the expression above; by Proposition~\ref{prop:Qdominates}, each $w''$ in the sum above belongs to an excedance class $\CCC_{\mu}$ with $\mu \prec \lambda$, so this gives
\[
w \equiv T_{\lambda} + \sum_{\mu \prec \lambda} a_{\mu}^{w'} T_{\mu} + \sum_{\mu \prec \lambda} \sum_{w'' \in \CCC_{\mu}} \left( b_{w''} T_{\mu} + \sum_{\nu \prec \mu} b_{w''} a_{\nu}^{w''} T_{\nu}\right)
\]
for some coefficients $a_{\mu}^{w'}, a_{\mu}^{w''} \in \ZZ$.  Combining like terms, the proof is complete.
\end{proof}

\section{The quasisymmetric variety}
\label{sec:vanishingtheorems}

In this section we prove our final main result, Theorem~\ref{thm:vanishingQSV}.  
The proof rests on a number of intermediate technical results, which we summarize before stating the theorem below.  
The complete details or each intermediate step are given in subsequent subsections.

As in the introduction, let $\QSym_{n}$ denote the quasisymmetric polynomials in $R_{n} = \QQ[x_{2}, \ldots, x_{n}]$ and write $M_{\alpha}$ for the monomial quasisymmetric function indexed by the composition $\alpha$.  In Section~\ref{sec:QSymvanish}, we define a family of non-homogeneous polynomials $P_{\alpha}$ which are also indexed by compositions and we show that
\begin{equation}
\label{eq:Palphahomogeneous}
P_{\alpha} = M_{\alpha} + \text{lower degree terms}.
\end{equation}
For a permutation $\sigma \in S_{n}$, we write $P_{\alpha}(\sigma)$ for the evaluation of $P_{\alpha}$ at $x_{1} = \sigma_{1}$, $x_{2} = \sigma_{2}$, and so on.  Recall the set $\QSV_{n}$ defined in Section~\ref{sec:QSV}.

\begin{thm}
\label{thm:vanishing}
For each non-empty integer composition $\alpha$ with at most $n$ parts and any $\sigma\in \QSV_n$ we have $P_{\alpha} (\sigma)=0$.
\end{thm}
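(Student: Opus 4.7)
The plan is to evaluate $P_{\alpha}(Q_{\lambda})$ directly for each $\lambda \in \NCP_{n}$, exploiting the explicit cycle structure of $Q_{\lambda}$ from Lemma~\ref{lem:QSVcycles}. Since $P_{\alpha} = M_{\alpha} + (\text{lower degree terms})$, the task is to show that the top-degree contribution
\[
M_{\alpha}(Q_{\lambda}) = \sum_{i_{1} < \cdots < i_{k}} Q_{\lambda}(i_{1})^{\alpha_{1}} \cdots Q_{\lambda}(i_{k})^{\alpha_{k}}
\]
is exactly cancelled by the evaluation of the lower degree corrections.

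First, I would re-index this sum by the connected components $C_{1}, \ldots, C_{s}$ of $\lambda$. Each component $C_{r} = \{c_{r,1} < \cdots < c_{r,m_{r}}\}$ carries the cycle $(c_{r,m_{r}} \cdots c_{r,1})$ of $Q_{\lambda}$, so the multiset $\{Q_{\lambda}(i) \;|\; i \in C_{r}\}$ is a cyclic shift of $C_{r}$, with $Q_{\lambda}(c_{r,1}) = c_{r,m_{r}}$ and $Q_{\lambda}(c_{r,t}) = c_{r,t-1}$ for $t \geq 2$. The noncrossing property forces the components to be laminarly nested, which allows the increasing tuples $i_{1} < \cdots < i_{k}$ to be classified combinatorially by how their entries are distributed among the $C_{r}$.

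Next, I would induct on $n$, handling small cases directly. For the inductive step, I would split on the structure of the component $C$ containing $n$. If $C = \{n\}$ is a singleton, then $Q_{\lambda}(n) = n$ and the sum separates into terms with $i_{k} = n$ and terms with $i_{k} < n$; both pieces reduce to the same statement on the noncrossing partition $\lambda$ restricted to $[n-1]$, which is handled by induction. Otherwise $C$ contains an arc $\edge{j}{n}$ (possibly with further nested material) which can be peeled off, again reducing to a smaller partition. In each case the inductive hypothesis disposes of the smaller partition, and the remaining terms should cancel by the precise form of the lower degree corrections in $P_{\alpha}$.

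The main obstacle I anticipate is reconciling the linear order on positions with the cyclic action of $Q_{\lambda}$ on each component, and tracking how the lower degree corrections propagate through the componentwise decomposition. The noncrossing hypothesis is the essential tool: its laminar structure is precisely what permits a clean inductive decomposition. Moreover, this vanishing is genuinely a feature of $\QSV_{n}$ rather than of $S_{n}$ at large---direct computation shows that $M_{\alpha}(\sigma)$ is not constant on $\QSV_{n}$ for non-partition $\alpha$ (e.g.~$\alpha = (1,2)$, $n = 4$)---so the corrections in $P_{\alpha}$ must truly perform the cancellation, and the delicate step will be checking that the construction of these corrections matches exactly the combinatorics of the cyclic shifts above.
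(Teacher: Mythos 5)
Your high-level outline touches the right objects — the cycle decomposition of $Q_{\lambda}$ from Lemma~\ref{lem:QSVcycles} and the laminar (noncrossing) nesting of components — but the proposal contains a genuine gap precisely where you flag the ``delicate step'': no cancellation mechanism is supplied, and the one you hint at does not obviously exist. Concretely, your inductive step for the non-singleton case says that an arc $\edge{j}{n}$ ``can be peeled off, again reducing to a smaller partition,'' but removing that arc changes the cycle of $Q_{\lambda}$ containing both $j$ and $n$ in a way that does not split $P_{\alpha}(Q_{\lambda})$ into $P_{\alpha}(Q_{\lambda'})$ plus a controlled correction. Already for $n=3$, $\lambda = \edge{1}{2}\,\edge{2}{3}$ (so $Q_{\lambda}=312$) versus $\lambda' = \edge{1}{2}$ (so $Q_{\lambda'}=213$) there is no visible recursion between $P_{\alpha}(312)$ and $P_{\alpha}(213)$; both vanish, but not because one is an easy consequence of the other. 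The singleton-$\{n\}$ case also needs a small correction: terms with $i_{\ell}=n$ are not ``another copy of the $[n-1]$ statement'' but are identically zero because the factor $(x_{n}^{e}-n^{e})$ vanishes.

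The paper's actual proof has two genuinely different ingredients you would need to discover. First (Lemma~\ref{lem:onecyclevanishing}), for $\sigma$ with a single long cycle $C=\{a_{1}<\cdots<a_{m}\}$, one expands $P_{\alpha}(\sigma)$ as a signed sum over weakly increasing tuples $j_{1}\le\cdots\le j_{k}$ together with choices $\epsilon_{i}\in\{0,1\}$ (Equation~\eqref{eq:Patsigma2}), and then constructs a \emph{sign-reversing involution} by toggling the last index $d$ where the cyclic value $z_{j_{d},\epsilon_{d}}$ is not at the boundary; the crucial point is that $a_{j-1}$ and $a_{j}$ both represent the same cyclic value $z_{j,\epsilon}$, and this ambiguity is what the involution exploits. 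Second, for general $\sigma = Q_{\lambda}$, the paper does \emph{not} induct on $n$ or peel an arc: instead it groups the summands of~\eqref{eq:Palpha_evaluated} by the \emph{leftmost nested component} of $\lambda$ relative to the chosen index set, showing each group factors as (stuff depending only on the flanking indices) times $P_{\alpha'}(\tau)$ for a one-cycle $\tau$, and then appeals to the first ingredient. If you want to make your approach rigorous you will either need to reproduce something equivalent to this involution and factorization, or find a substitute for the ``peeling'' reduction that you have not yet identified.
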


We will assume Theorem~\ref{thm:vanishing} until Section~\ref{sec:QSymvanish}, which contains a proof of the theorem.

Now recall that for any $f \in R_{n}$, $\mathsf{h}(f)$ denotes the homogeneous top-degree component of $f$, and that for any ideal $I \subseteq R_{n}$, we write
\[
\mathsf{gr}(I) = \langle \mathsf{h}(f) \;|\; f \in I \rangle.
\]
In Section~\ref{sec:Grobner}, we show how standard results in Gr\"{o}bner basis theory produce a linear isomorphism $R_{n}/I \cong R_{n} / \mathsf{gr}(I)$, which we also assume in order to prove the following result.

\begin{thm}\label{thm:vanishingQSV} 
The ideal $\langle P_{\alpha} \;|\; \text{non-empty compositions $\alpha$ of length $\ell(\alpha) \le n$} \rangle \subseteq R_n$ is the vanishing ideal $\mathbf{I}(\QSV_n)$ and 
\[
\langle \QSym_{n}^{+} \rangle = \mathsf{gr}\big(\mathbf{I}(\QSV_{n})\big),
\]
where  $\QSym_{n}^{+}$ denotes the set of positive-degree quasisymmetric functions.
 \end{thm}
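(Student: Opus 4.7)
The strategy is a dimension-counting argument that simultaneously forces $J := \langle P_{\alpha} \,:\, \text{$\alpha \vDash d$, $d>0$, $\ell(\alpha) \le n$} \rangle$ to equal $\mathbf{I}(\QSV_n)$ and $\mathsf{gr}(\mathbf{I}(\QSV_n))$ to equal $\langle \QSym_n^+ \rangle$.  The three key inputs are Theorem~\ref{thm:vanishing}, the identity $\mathsf{h}(P_\alpha)=M_\alpha$ recorded in~\eqref{eq:Palphahomogeneous}, and the Aval--Bergeron--Bergeron result $\dim R_n/\langle \QSym_n^+\rangle = C_n$.

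First I would establish the two ``free'' containments.  Theorem~\ref{thm:vanishing} asserts that each generator $P_\alpha$ of $J$ vanishes on $\QSV_n$, so $J \subseteq \mathbf{I}(\QSV_n)$.  In the graded direction, \eqref{eq:Palphahomogeneous} gives $\mathsf{h}(P_\alpha)=M_\alpha$ for every nonempty composition $\alpha$ with $\ell(\alpha) \le n$, and since these $M_\alpha$ linearly span $\QSym_n^+$, we obtain $\langle \QSym_n^+\rangle \subseteq \mathsf{gr}(J)$.

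Next I would chain these containments with the dimensional inputs.  The inclusion $J \subseteq \mathbf{I}(\QSV_n)$ yields a surjection $R_n/J \twoheadrightarrow R_n/\mathbf{I}(\QSV_n)$, and since $\QSV_n$ consists of $|\NCP_n|=C_n$ distinct points in $\QQ^n$, evaluation gives $\dim R_n/\mathbf{I}(\QSV_n)=C_n$; hence $\dim R_n/J \ge C_n$.  Symmetrically, $\langle \QSym_n^+\rangle \subseteq \mathsf{gr}(J)$ gives $\dim R_n/\mathsf{gr}(J) \le C_n$.  The Gr\"obner linear isomorphism $R_n/J \cong R_n/\mathsf{gr}(J)$ from Section~\ref{sec:Grobner} collapses these bounds into the chain
\[
C_n \;=\; \dim R_n/\mathbf{I}(\QSV_n) \;\le\; \dim R_n/J \;=\; \dim R_n/\mathsf{gr}(J) \;\le\; \dim R_n/\langle \QSym_n^+\rangle \;=\; C_n,
\]
forcing equality everywhere.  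Since $J \subseteq \mathbf{I}(\QSV_n)$ with equal (finite) codimension, $J = \mathbf{I}(\QSV_n)$; likewise $\mathsf{gr}(J) = \langle \QSym_n^+\rangle$, and substituting $J = \mathbf{I}(\QSV_n)$ gives the second claimed identity $\mathsf{gr}(\mathbf{I}(\QSV_n)) = \langle \QSym_n^+\rangle$.

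The main obstacle in this proof is entirely offloaded onto Theorem~\ref{thm:vanishing}: once the polynomials $P_\alpha$ genuinely vanish on $\QSV_n$, the argument above is a clean dimension squeeze.  The real combinatorial work must live in Section~\ref{sec:QSymvanish}, namely in constructing the $P_\alpha$ with exactly the right leading term so that $\mathsf{h}(P_\alpha)=M_\alpha$, and in verifying---using the explicit cycle structure of $Q_\lambda$ from Lemma~\ref{lem:QSVcycles} together with the noncrossing-partition bookkeeping of Section~\ref{sec:ncp}---that $P_\alpha$ annihilates every point $Q_\lambda \in \QSV_n$.
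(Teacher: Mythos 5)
Your proposal is correct and follows essentially the same route as the paper: establish $J\subseteq \mathbf{I}(\QSV_n)$ via Theorem~\ref{thm:vanishing}, establish $\langle\QSym_n^+\rangle\subseteq\mathsf{gr}(J)$ via $\mathsf{h}(P_\alpha)=M_\alpha$, then squeeze the dimensions using the Gr\"obner isomorphism $R_n/J\cong R_n/\mathsf{gr}(J)$ and the count $\dim R_n/\langle\QSym_n^+\rangle = C_n = |\QSV_n|$. The only cosmetic difference is that you arrange the inequality chain in ascending order rather than descending, which is equivalent.
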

 
\begin{proof}
Throughout the proof, write
\[
I_{n} = \langle P_{\alpha} \;|\; \text{non-empty compositions $\alpha$ of length $\ell(\alpha) \le n$} \rangle.
\]
We consider the dimension of the quotient of $R_{n}$ by each of $I_{n}$, $\mathbf{I}(\QSV_{n})$, and $\langle \QSym_{n}^{+} \rangle$.

First, the result~\cite[Theorem 1.1]{ABB} states that $\dim(R_n\big/\langle \QSym_{n}^{+} \rangle)$ is the $n$th Catalan number $C_{n} = \frac{1}{n+1} \binom{2n}{n}$.  Equation~\eqref{eq:Palphahomogeneous} shows that $\mathsf{h}(P_{\alpha}) = M_{\alpha}$ for all compositions $\alpha$, so that we have
\[
\langle \QSym_{n}^{+} \rangle \subseteq \mathsf{gr}(I_{n}),
\]
and thus $ \dim(R_n\big/\langle \QSym_{n}^{+} \rangle) \ge \dim(R_n\big/\mathsf{gr}(I_n))$.  
Gr\"{o}bner basis theory then gives a linear isomorphism
\[
R_{n} / \mathsf{gr}(I_{n}) \cong R_{n}/I_{n},
\]
so the dimensions of the two quotients agree.  
Furthermore, using Theorem~\ref{thm:vanishing}, $P_{\alpha} \in \mathbf{I}(\QSV_{n})$ for each composition $\alpha$, so that
\[
I_{n} \subseteq \mathbf{I}(\QSV_{n}),
\]
and consequently $ \dim(R_n\big/I_n) \ge \dim(R_n\big/\mathbf{I}(\QSV_{n})) $.
Finally, because $\mathbf{I}(\QSV_{n})$ is the vanishing ideal for a finite set of points, $\dim(R_{n}\big/\mathbf{I}(\QSV_{n}))$ is also the size of the set $|\QSV_{n}| = C_{n}$.  We therefore have the sequence of inequalities
\[
C_{n} = \dim(R_n\big/\langle \QSym_{n}^{+} \rangle) \ge \dim(R_n\big/\mathsf{gr}(I_n)) =  \dim(R_n\big/I_n) \ge \dim(R_n \big/ \mathbf{I}(\QSV_{n})) = C_{n}.
\]
Therefore, all of the dimensions above are equal to $C_{n}$.  With the containments $\langle \QSym_{n}^{+} \rangle \subseteq \mathsf{gr}(I_{n})$ and $I_{n} \subseteq \mathbf{I}(\QSV_{n})$, this competes the proof.
\end{proof}

Using Gr\"{o}bner basis theory again, we obtain the following corollary.

\begin{cor}
We have $R_n\big/\langle \QSym_{n}^{+} \rangle \cong R_n\big/\mathbf{I}(\QSV_{n})$ as vector spaces.
\end{cor}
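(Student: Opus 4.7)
The plan is to derive this corollary as a direct consequence of Theorem~\ref{thm:vanishingQSV} combined with the standard Gröbner-basis fact invoked in Section~\ref{sec:Grobner}, namely that for any ideal $I \subseteq R_n$ there is a vector space isomorphism $R_n/I \cong R_n/\mathsf{gr}(I)$. No new combinatorics is required, so this is essentially a one-line argument.

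First, I would apply Theorem~\ref{thm:vanishingQSV}, which identifies the top-degree component of the vanishing ideal: $\mathsf{gr}\bigl(\mathbf{I}(\QSV_{n})\bigr) = \langle \QSym_{n}^{+} \rangle$. Then, taking $I = \mathbf{I}(\QSV_n)$ in the Gröbner-basis isomorphism, I obtain
\[
R_n\big/\mathbf{I}(\QSV_{n}) \ \cong\  R_n\big/\mathsf{gr}\bigl(\mathbf{I}(\QSV_{n})\bigr) \ =\  R_n\big/\langle \QSym_{n}^{+}\rangle
\]
as vector spaces, which is the claim.

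The only subtle point is ensuring that the Gröbner-basis isomorphism is cited correctly: for a monomial order, one takes a Gröbner basis $G$ of $I$, considers the leading-term ideal $\mathrm{LT}(I)$, and uses the fact that the standard monomials (those not divisible by any $\mathrm{LT}(g)$, $g \in G$) form a basis of both $R_n/I$ and $R_n/\mathrm{LT}(I)$. Under a graded monomial order this identifies $\mathrm{LT}(I)$ with $\mathsf{gr}(I)$ up to further monomial reduction, yielding the desired linear isomorphism. Since this machinery is already invoked in the proof of Theorem~\ref{thm:vanishingQSV} itself, the corollary needs only a one-sentence appeal to it, and there is no genuine obstacle — the real work was done in constructing the polynomials $P_\alpha$ and proving the vanishing on $\QSV_n$.
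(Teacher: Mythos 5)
Your proposal is correct and matches the paper's own argument: the paper likewise derives the corollary in one line by applying the Section~\ref{sec:Grobner} isomorphism $R_n/I \cong R_n/\mathsf{gr}(I)$ with $I = \mathbf{I}(\QSV_n)$ and then substituting $\mathsf{gr}(\mathbf{I}(\QSV_n)) = \langle \QSym_n^+ \rangle$ from Theorem~\ref{thm:vanishingQSV}. One small imprecision in your gloss of the Gr\"obner step: the relevant chain is $R_n/\mathsf{gr}(I) \cong R_n/\mathsf{in}(\mathsf{gr}(I)) = R_n/\mathsf{in}(I) \cong R_n/I$ via Propositions~\ref{prop:grin} and~\ref{prop:Gbasis}, rather than ``identifying $\mathrm{LT}(I)$ with $\mathsf{gr}(I)$''; but this does not affect the validity of the argument.
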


\begin{rem}
The Tamari lattice, defined on binary trees or any associated combinatorial objects, is a well-known order (see~\cite{Tamari}). It can also be defined on noncrossing partitions and has the following property: for any noncrossing partition $\lambda$, we define $\ell(\lambda)$ as the length of the shortest path from the identity to $\lambda$ in the Tamari lattice. In a subsequent paper, we will study the polynomial $m_\lambda$ of degree $\ell(\lambda)$, such that for any $Q_\mu\in \QSV_n$, we have
  $$ m_\lambda(Q_\mu)=\begin{cases}1&\text{ if }\lambda=\mu,\\ 0 &\text{ if  } \mu<_w\lambda. \end{cases}$$
Furthermore, the Hilbert series
$$H_n(q) = \sum_{\lambda} q^{\ell(\lambda)}$$
corresponds to the (graded) quotient $R_n\big/\langle \QSym_{n}^{+} \rangle$.
\end{rem}

\begin{rem}
In light of Remark~\ref{rem:altECs}, one should expect the vanishing ideal of the set $\{\omega_{0} w \omega_{0} \;|\; w \in \QSV_{n}\}$ in Remark~\ref{rem:auto} to have similar properties to $\mathbf{I}(\QSV_{n})$, and this turns out to be the case.  In particular, the polynomials $P_\alpha$ can be modified to polynomials $P'_\alpha$ that vanish on every permutation $\omega_{0} w \omega_{0}$ for $w \in \QSV_{n}$. The top homogeneous component of $P'_\alpha$ is also $M_{\alpha}$, leading to an analogue of Theorems~\ref{thm:vanishing} and~\ref{thm:vanishingQSV} for this set.  
This is closely related to the automorphisms of the ring of quasisymmetric functions (see, for example,~\cite{JWY}).
\end{rem}

The remainder of the section fills in the gaps of the proof of Theorem~\ref{thm:vanishingQSV}.  Section~\ref{sec:QSymvanish} gives a complete account of the polynomials $P_{\alpha}$, and Section~\ref{sec:Grobner} describes the isomorphism $R_{n}/\mathsf{gr}(I_{n}) \cong R_{n}/I_{n}$ used in the proof.

\subsection{The vanishing polynomial $P_{\alpha}$}
\label{sec:QSymvanish}

In this section we define the polynomials $P_{\alpha}$ and prove Theorem~\ref{thm:vanishing}.  We begin with a short review of compositions and the refinement order as they relate to $\QSym$.  

A \emph{composition} is a sequence of positive integers $\alpha = (\alpha_{1}, \ldots, \alpha_{k})$.  We refer to $k$ as the \emph{length} of $\alpha$ and to $d = \sum_{i = 1}^{k} \alpha_{i}$ as the \emph{size} of $\alpha$.  Compositions are partially ordered by refinement: the composition $\alpha$ refines another composition $\beta = (\beta_{1}, \ldots, \beta_{\ell})$ if there exists a sequence $1 = f_{1} < f_{2} < \cdots < f_{\ell + 1} = k+1$ for which $\beta_{i} = \alpha_{f_{i}} + \alpha_{f_{i} + 1} + \cdots + \alpha_{f_{i+1} - 1}$, and in this case we write $\beta \succeq \alpha$.  Whenever we have a refinement relation $\beta \succeq \alpha$, we will use the notation $f_{1}, f_{2}, \ldots, f_{\ell+1}$ to refer to the sequence of indices in the definition.

For each composition of length $k \ge 1$, the monomial quasisymmetric function $M_{\alpha} \in R_{n}$ is defined by
\[
M_{\alpha} = \sum_{1\le i_1<i_2<\cdots<i_k\le n} x_{i_1}^{\alpha_1} x_{i_2}^{\alpha_2}\cdots  x_{i_k}^{\alpha_k},
\]
where the sum is over subsets $\{i_{1}, \ldots, i_{k}\}$ of $[n]$, enumerated in increasing order.  Using the same convention we define the vanishing polynomial $P_{\alpha} \in R_{n}$ to be
\[
P_{\alpha} =  
\sum_{\beta \succeq \alpha}
\hspace{0.6em}
\sum_{1\le i_{1} < i_{2} < \cdots < i_{\ell} \le n} 
\hspace{0.6em}
\prod_{j = 1}^{\ell}
\Big(
(x_{i_{j}}^{\alpha_{f_{j}}} - i_{j}^{\alpha_{f_{j}}}) 
\prod_{s = f_{j} + 1}^{f_{j+1} - 1} (- i_{j})^{\alpha_{s}}
\Big).
\]
While this formula appears to be quite dense, expanding it reveals an intuitive combinatorial structure.  We compute one example in its entirety for the sake of exposition: 
\begin{align*}
P_{(1, 2, 1)}(x_{1}, \ldots, x_{4}) =& (x_{1} - 1)(x_{2}^{2} - 2^{2})(x_{3} - 3) + (x_{1} - 1)(x_{2}^{2} - 2^{2})(x_{4} - 4) \\
&+ (x_{1} - 1)(x_{3}^{2} - 3^{2})(x_{4} - 4) + (x_{2} - 2)(x_{3}^{2} - 3^{2})(x_{4} - 4) \\[1ex]
&\hspace{1.2em} -  (x_{1} - 1)(x_{2}^{2} - 2^{2})2 -  (x_{1} - 1)(x_{3}^{2} - 3^{3})3 -  (x_{1} - 1)(x_{4}^{2} - 4^{2})4  \\
&\hspace{1.2em} -  (x_{2} - 2)(x_{3}^{2} - 3^{3})3 -  (x_{2} - 2)(x_{4}^{2} - 4^{2})4 -  (x_{3} - 3)(x_{4}^{2} - 4^{2})4 \\[1ex]
&\hspace{2.4em} -  (x_{1} - 1)1^{2}(x_{2} - 2) -  (x_{1} - 1)1^{2}(x_{3} - 3) -  (x_{1} - 1)1^{2}(x_{4} - 4) \\
&\hspace{2.4em} -  (x_{2} - 2)2^{2}(x_{3} - 3) -  (x_{2} - 2)2^{2}(x_{4} - 4) -  (x_{3} - 3)3^{2}(x_{4} - 4) \\[1ex]
&\hspace{3.6em} + (x_{1} - 1) 1^{3} + (x_{2} - 2) 2^{3} + (x_{3} - 3) 3^{3} + (x_{4} - 4) 4^{3},
\end{align*}
where summands corresponding to the same index $\beta \succeq (1, 2, 1)$ are grouped horizontally and by alignment.  These values of $\beta$ are respectively $(1, 2, 1)$, $(1, 3)$, $(3, 1)$, and $(4)$.

\begin{prop}
For all compositions $\alpha$, $\mathsf{h}(P_{\alpha}) = M_{\alpha}$.
\end{prop}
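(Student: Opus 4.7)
The plan is straightforward: verify that among all summands of $P_{\alpha}$ indexed by compositions $\beta \succeq \alpha$, only the $\beta = \alpha$ summand contributes to the top-degree homogeneous component, and that its top-degree part is exactly $M_{\alpha}$. The argument is essentially a degree count, so I would organize it in three short steps.

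First, I would bound the $x$-degree of an arbitrary summand. Fix a refinement $\beta \succeq \alpha$ of length $\ell$ with associated indices $f_{1} < f_{2} < \cdots < f_{\ell}$, and fix $1 \le i_{1} < \cdots < i_{\ell} \le n$. In the product
\[
\prod_{j = 1}^{\ell} \Big( (x_{i_{j}}^{\alpha_{f_{j}}} - i_{j}^{\alpha_{f_{j}}}) \prod_{s = f_{j} + 1}^{f_{j+1} - 1} (- i_{j})^{\alpha_{s}} \Big),
\]
each factor $(x_{i_{j}}^{\alpha_{f_{j}}} - i_{j}^{\alpha_{f_{j}}})$ has top $x$-degree $\alpha_{f_{j}}$, while the factors $(-i_{j})^{\alpha_{s}}$ are scalars. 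Hence the summand has total $x$-degree at most $\sum_{j=1}^{\ell} \alpha_{f_{j}}$, attained by selecting $x_{i_{j}}^{\alpha_{f_{j}}}$ out of each binomial.

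Second, I would compare this to $d = \sum_{i=1}^{k} \alpha_{i}$, the size of $\alpha$. Since $\{f_{1}, \ldots, f_{\ell}\} \subseteq \{1, \ldots, k\}$ and every $\alpha_{i}$ is a positive integer,
\[
\sum_{j=1}^{\ell} \alpha_{f_{j}} \;\le\; \sum_{i=1}^{k} \alpha_{i} \;=\; d,
\]
with equality if and only if $\ell = k$, i.e.\ precisely when $\beta = \alpha$. Every other summand therefore has $x$-degree strictly less than $d$ and contributes nothing to $\mathsf{h}(P_{\alpha})$.

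Finally, I would extract the top component from the $\beta = \alpha$ summand. In this case $f_{j} = j$ for all $j$, and each inner product $\prod_{s = f_{j}+1}^{f_{j+1}-1} (-i_{j})^{\alpha_{s}}$ is empty, so the summand reduces to
\[
\sum_{1 \le i_{1} < \cdots < i_{k} \le n} \prod_{j=1}^{k} (x_{i_{j}}^{\alpha_{j}} - i_{j}^{\alpha_{j}}),
\]
whose top-degree homogeneous component is
\[
\sum_{1 \le i_{1} < \cdots < i_{k} \le n} \prod_{j=1}^{k} x_{i_{j}}^{\alpha_{j}} \;=\; M_{\alpha}.
\]
This gives $\mathsf{h}(P_{\alpha}) = M_{\alpha}$. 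There is no real obstacle here; the only thing to be careful about is the indexing convention $f_{1} = 1$, $f_{\ell+1} = k+1$, and the fact that the inner product over $s$ is empty precisely when consecutive $f_{j}$'s differ by one.
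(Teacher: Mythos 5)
Your proof is correct and takes essentially the same approach as the paper: isolate the $\beta = \alpha$ summand as the unique highest-degree contributor and observe that its top component is $M_{\alpha}$. Your degree count ($\sum_{j=1}^{\ell} \alpha_{f_j} \le d$ with equality iff $\beta = \alpha$) is in fact stated more carefully than in the paper, which loosely says the degree of the $\beta$-summand ``equals the length $\ell$ of $\beta$'' --- the correct quantity is $\sum_j \alpha_{f_j}$, and your version makes the monotonicity in $\ell$ explicit.
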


\begin{proof}
The top-degree term in the summand of $P_{\alpha}$ indexed by $\beta \succeq \alpha$ has degree equal to the length $\ell$ of $\beta$, so we may immediately discard all summands except for the $\beta = \alpha$ one.  Expanding this summand, the top-degree terms have the form $x_{i_1}^{\alpha_1} x_{i_2}^{\alpha_2}\cdots  x_{i_k}^{\alpha_k}$, with one such term for each subset of indices $1\le i_{1} < i_{2} < \cdots < i_{k} \le n$.
\end{proof}

This proves Equation~\eqref{eq:Palphahomogeneous} from the previous section.  We now begin our proof of Theorem~\ref{thm:vanishing}, which states that $P_{\alpha}(\sigma) = 0$ for each composition $\alpha$ and permutation $\sigma \in \QSV_{n}$.  We first prove a special case, with the full proof coming after the proof of Lemma~\ref{lem:onecyclevanishing}.

\begin{lem}
\label{lem:onecyclevanishing}
Suppose that $\sigma \in \QSV_{n}$ has at most one cycle of length greater than $1$.  Then for all compositions $\alpha$, $P_{\alpha}(\sigma) = 0$.
\end{lem}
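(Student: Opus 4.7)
The plan is to induct on the length $k = \ell(\alpha)$, using a cleaner reformulation of $P_\alpha$ that matches the pattern visible in the expansion of $P_{(1,2,1)}$ given after the definition. Each pair $(\beta, (i_1, \ldots, i_\ell))$ indexing the defining sum for $P_\alpha$ corresponds bijectively to a weakly increasing sequence $I_1 \le I_2 \le \cdots \le I_k$ in $[n]$, where $I_s$ is the index $i_j$ assigned to the block of $\beta$ containing the part $\alpha_s$. Under this reindexing I would write
\[
P_\alpha = \sum_{1 \le I_1 \le \cdots \le I_k \le n} \prod_{s=1}^k G_s(I_s, I_{s-1}),
\]
where $G_s(I, I') = x_I^{\alpha_s} - I^{\alpha_s}$ when $s = 1$ or $I > I'$ (a ``leader'' factor), and $G_s(I, I') = -I^{\alpha_s}$ when $I = I'$ (an ``internal'' factor). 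Writing $C = \{c_1 < c_2 < \cdots < c_m\}$ for the unique nontrivial cycle of $\sigma$ (so $C = \emptyset$ when $\sigma$ is the identity), leader factors vanish at positions $I \notin C$, and a short induction on $s$ shows that only sequences with every $I_s \in C$ can contribute a nonzero summand to $P_\alpha(\sigma)$.

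The key technical step is the inner sum over $I_k$. For any fixed $I_1 \le \cdots \le I_{k-1}$ in $C$ with $I_{k-1} = c_r$, I would compute
\begin{align*}
\sum_{\substack{I_k \in C \\ I_k \ge c_r}} G_k^\sigma(I_k, c_r)
&= -c_r^{\alpha_k} + \sum_{t > r}\bigl(\sigma(c_t)^{\alpha_k} - c_t^{\alpha_k}\bigr) \\
&= -c_r^{\alpha_k} + \sum_{t > r}\bigl(c_{t-1}^{\alpha_k} - c_t^{\alpha_k}\bigr) \\
&= -c_m^{\alpha_k},
\end{align*}
using $\sigma(c_t) = c_{t-1}$ for $t \ge 2$ from Lemma~\ref{lem:QSVcycles} together with telescoping. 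The crucial feature is that this value is independent of $r$, so the constant $-c_m^{\alpha_k}$ can be pulled out of the entire remaining sum.

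Pulling it out and recognising what is left as the analogous expression for the truncated composition $\alpha' = (\alpha_1, \ldots, \alpha_{k-1})$ yields $P_\alpha(\sigma) = -c_m^{\alpha_k}\, P_{\alpha'}(\sigma)$. The base case $k = 1$ is immediate: $P_{(a)}(\sigma) = \sum_i (\sigma(i)^a - i^a) = 0$ since $\sigma$ is a permutation. Induction then closes the argument. The main obstacle is the initial reindexing step, where the signs contributed by the various internal factors in the original definition must be matched precisely against the $-I^{\alpha_s}$ terms in the weakly-increasing-sequence rewrite so that the telescope in the inner sum actually closes; once that bookkeeping is verified against a small case (such as the expansion of $P_{(1,2,1)}$ worked out in the paper), the rest of the proof is mechanical.
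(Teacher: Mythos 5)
Your proof is correct, and it takes a genuinely different route from the paper's. The paper also passes from the defining double sum to a sum over weakly increasing sequences $j_1 \le \cdots \le j_k$ in $[m]$, but it then fully expands each leader factor $\sigma(a_{j_s})^{\alpha_s} - a_{j_s}^{\alpha_s}$ into two signed monomials (recorded by an auxiliary bit $\epsilon_s \in \{0,1\}$, with the convention $a_0 = a_m$) and kills the resulting signed sum by a sign-reversing involution that shifts the rightmost suitable index $j_d$ by one and flips $\epsilon_d$. You instead keep the factored form $\prod_s G_s$ intact and observe that the inner sum over $I_k$, for fixed $I_{k-1} = c_r$, telescopes to the constant $-c_m^{\alpha_k}$ independently of $r$; pulling it out gives the recursion $P_\alpha(\sigma) = -c_m^{\alpha_k}\,P_{(\alpha_1,\ldots,\alpha_{k-1})}(\sigma)$, and induction on $k$ with base case $P_{(a)}(\sigma) = \sum_i\bigl(\sigma(i)^a - i^a\bigr) = 0$ finishes. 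Your argument is arguably the more transparent of the two: it avoids the $\epsilon$-bookkeeping and the case analysis in defining the involution, and it yields a clean multiplicative recursion as a bonus. One remark on the bookkeeping you flagged: your interior factor $G_s(I,I') = -I^{\alpha_s}$ is the correct reading of the definition of $P_\alpha$ — the displayed $(-i_j)^{\alpha_s}$ there should be parsed as $-(i_j^{\alpha_s})$, as the worked example for $P_{(1,2,1)}$ confirms (e.g.\ the $\beta=(4)$ terms carry a $+$ sign) — so the bijection between pairs $(\beta, i_1<\cdots<i_\ell)$ and weakly increasing sequences, together with your choice of signs, does check out.
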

\begin{proof}
We begin with a simple but significant reduction using the observation that for any exponent $e$, the expression $(x_{i}^{e} - i^{e})$ vanishes at $x_{i} = i$.  If $\sigma$ is the identity, the claim follows from this observation and the definition of $P_{\alpha}$.  
Assuming the contrary, write $C = \{a_{1} < a_{2} < \cdots < a_{m}\}$ for the elements of the unique long cycle in $\sigma$.  By Lemma~\ref{lem:QSVcycles}, we know that $\sigma = (a_{m}\;\ldots\;a_{2}a_{1})$.  We then have
\begin{equation}
\label{eq:Patsigma1}
P_{\alpha}(\sigma) =  
\sum_{\beta \succeq \alpha}
\hspace{0.6em}
\sum_{\substack{ 1\le i_{1} < i_{2} < \cdots < i_{\ell} \le n \\ \text{$i_{j} \in C$ for all $j \in [\ell]$} }}
\hspace{0.6em}
\prod_{j = 1}^{\ell}
\Big(
(\sigma_{i_{j}}^{\alpha_{f_{j}}} - i_{j}^{\alpha_{f_{j}}}) 
\prod_{s = f_{j} + 1}^{f_{j+1} - 1} (- i_{j})^{\alpha_{s}},
\Big)
\end{equation}
where the second sum is over indices $1\le i_{1} < i_{2} < \cdots < i_{\ell} \le n$ not fixed by $\sigma$.  

For the remainder of the proof, we adopt the convention that $a_{0} = a_{m}$, so that $\sigma_{a_{i}} = a_{i-1}$ for each $1 \le i \le m$.  We now expand the formula for $P_{\alpha}(\sigma)$ above, from which we obtain
\begin{equation}\label{eq:Patsigma2}
	P_{\alpha}(\sigma)=
	\sum_{{1\le j_1\le j_2\le\cdots\le j_k\le m}} \sum_{{\epsilon_1,\epsilon_2,\ldots,\epsilon_k \atop \epsilon_i\in \{0,1\}}\atop \epsilon_i=1 \text{ if } j_{i-1}=j_i} (-1)^{\sum \epsilon_i}\ 
	   z_{j_1,\epsilon_1}^{\alpha_1}z_{j_2,\epsilon_2}^{\alpha_2} \cdots z_{j_k,\epsilon_k}^{\alpha_k}\,,
\end{equation}
where we write $z_{j_i,\epsilon_i} = a_{j_{i} - 1 + \epsilon_{i}}$, so that
\[
z_{j_i,\epsilon_i} = \begin{cases} a_{j_i-1} & \text{if $\epsilon_i=0$,} \\ a_{j_i} & \text{if $\epsilon_{i} = 1$.}  \end{cases} 
\]

We complete the proof by showing that Equation~\eqref{eq:Patsigma2} is equal to zero.  We do by way of a sign-reversing involution on the summands of the right side of the equation.  To this end, fix two sequences $1\le j_1\le j_2\le\cdots\le j_k\le m$ and $\epsilon_1,\epsilon_2,\ldots,\epsilon_k \in \{0, 1\}$ for which $\epsilon_{i} = 1$ whenever $j_{i-1} = j_{i}$, as in the sum.  

We now take $d$ be be the maximal index for which $z_{j_{d}, \epsilon_{d}}$ is neither $0$ nor $m$, if such an index exists, or $d = 1$ otherwise.  We define two new sequences $1\le j_1'\le j_2'\le\cdots\le j_k'\le m$ and $\epsilon_1',\epsilon_2',\ldots,\epsilon_k' \in \{0,  1\}$ by stipulating that $j_{i}' = j_{i}$ and $\epsilon_{i}' = \epsilon_{i}$ if $i \neq d$, along with the following formulas:
\[
j_{d}' = \begin{cases} 
j_{d} + 1 & \text{if $\epsilon_{d} = 1$ and $j_{d} \neq n$} \\ 
1 & \text{if $\epsilon_{d} = 1$ and $j_{d} = n$} \\
j_{d} - 1 & \text{if $\epsilon_{d} = 0$ and $j_{d} \neq 1$} \\
n & \text{if $\epsilon_{d} = 0$ and $j_{d} = 1$}
\end{cases}
\qquad\text{and}\qquad
\epsilon_{d}' = \begin{cases}
1 & \text{if $\epsilon_{d} = 0$} \\ 
0 & \text{if $\epsilon_{d} = 1$.} \\ 
\end{cases}
\]
From this definition, one can verify that $z_{j_{i}, \epsilon_{i}} = z_{j_{i}', \epsilon_{i}'}$, as in all cases 
\[
j_{i} - 1 + \epsilon_{i} \equiv j_{i}' - 1 + \epsilon_{i}' \qquad \text{(mod $n$)}.
\]
This also shows that the operation described above is an involution: it does not change the value of $d$, and the for a fixed $d$ the description above is clearly an involution.  Finally, 
\[
(-1)^{\sum \epsilon'_i}\ z_{j'_1,\epsilon'_1}^{\alpha_1}z_{j'_2,\epsilon'_2}^{\alpha_2} \cdots z_{j'_k,\epsilon'_k}^{\alpha_k}
  = - (-1)^{\sum \epsilon_i}\ z_{j_1,\epsilon_1}^{\alpha_1}z_{j_2,\epsilon_2}^{\alpha_2} \cdots z_{j_k,\epsilon_k}^{\alpha_k}.
\]
Thus, all terms in $P_{\alpha}(\sigma)$ cancel with their image under this involution, completing the proof.
\end{proof}

In the remainder of the section, we will show how the general case follows from Lemma~\ref{lem:onecyclevanishing}.  To begin, let $\lambda$ be a noncrossing partition and fix a subset $\{i_{1}, i_{2}, \ldots, i_{\ell}\}$ of $[n]$.  
Define the \emph{leftmost nested component of $\lambda$ with respect to $\{i_{1}, i_{2}, \ldots, i_{\ell}\}$} as the leftmost connected component of $\lambda$ which contains some element $i_{j}$ and furthermore has no other connected component with this property ``nested'' between its arcs.  Formally, if we write $C$ for this component, $C$ is defined by the properties 
\begin{enumerate}[label = (\roman*), itemsep = 1ex]
\item $C \cap \{i_{1}, i_{2}, \ldots, i_{\ell}\} \neq \emptyset$,

\item for all $j \in [\ell]$ with $\min(C) \le i_{j} \le \max(C)$, we have $i_{j} \in C$, and

\item if $D \neq C$ is a connected component satisfying (i) and (ii), then $\max(C) < \min(D)$.

\end{enumerate}

For example, using the noncrossing partition
\[
\lambda = \begin{tikzpicture}[scale = 0.75, baseline = 0.75*-0.2]
\foreach \x in {1, ..., 9}{\draw[fill] (\x - 1, 0) node[inner sep = 2pt] (\x) {$\scriptstyle \x$};}
\draw[fill] (10 - 1, 0) node[inner sep = 2pt] (10) {$\hspace{-0.25em} \scriptstyle 10$};
\foreach \i\j in { 4/6, 2/7,1/9, 9/10}{\draw[thick] (\i) to[out = 35, in = 145] (\j);}
\end{tikzpicture}
\]
the leftmost nested component of $\lambda$ with respect to the set $\{1, 3, 4, 5, 8\}$ is $\{5\}$, and with respect to the set $\{1, 2, 8, 10\}$ the leftmost nested component of $\lambda$ is $\{2, 7\}$.

\begin{proof}[Proof of Theorem~\ref{thm:vanishing}]
The noncrossing structure of the cycles of $\sigma$ will be essential in the proof.  
Let $\lambda$ be the noncrossing partition for which $\sigma = Q_{\lambda}$, as defined in Section~\ref{sec:QSV}.  

We will carefully examining the expression
\begin{equation}\label{eq:Palpha_evaluated}
P_{\alpha}(\sigma) =  
\sum_{\beta \succeq \alpha}
\hspace{0.6em}
\sum_{1\le i_{1} < i_{2} < \cdots < i_{\ell} \le n} 
\hspace{0.6em}
\prod_{j = 1}^{\ell}
\Big(
(\sigma_{i_{j}}^{\alpha_{f_{j}}} - i_{j}^{\alpha_{f_{j}}}) 
\prod_{s = f_{j} + 1}^{f_{j+1} - 1} (- i_{j})^{\alpha_{s}}
\Big),
\end{equation}
and show that it can be divided into summands which each have a factor of the form $P_{\alpha'}(\tau)$, where $\alpha'$ is another composition and $\tau \in \QSV_{n}$ is one of the cycles of $\sigma$, taken as a permutation that fixes all other points.  Lemma~\ref{lem:onecyclevanishing} states that each $P_{\alpha'}(\tau) = 0$, so this will complete the proof.


We now analyze $P_{\alpha}(\sigma)$; recall the definition of a leftmost nested component given before the proof.  
Each summand in the formula~\eqref{eq:Palpha_evaluated} corresponds to the choice of a composition $\beta \succeq \alpha$ and a subset $\{i_{1}, i_{2}, \ldots, i_{\ell}\}$ of $[n]$.  
Let $C$ be the leftmost nested component of $\lambda$ with respect to this subset, and let $p$ and $q$ count the respective number of indices $i_{j}$ which appear left and right of the component $C$ in $\lambda$.  
Thus, $i_{p} < \min(C)$, $i_{\ell - q+1} > \max(C)$, and $\{i_{p+1}, \ldots, i_{\ell - q}\} \subseteq C$.  
Record the specifics of this configuration with four additional sets:
\begin{align*}
F_{\text{left}} &= \{f_{1}, \ldots, f_{p}, f_{p+1}\}, \\
F_{\text{right}} &= \{f_{\ell - q+1}, \ldots, f_{\ell+1}\}, \\
I_{\text{left}} &= \{i_{1}, \ldots, i_{p}\},\;\text{and} \\ 
I_{\text{right}} &= \{i_{\ell - q+1}, \ldots, i_{\ell}\}.
\end{align*}  
The assignment of each summand to a tuple $(C, F_{\text{left}}, F_{\text{right}}, I_{\text{left}}, I_{\text{right}})$ is uniquely determined, and therefore gives a partition of the summands of $P_{\alpha}(\sigma)$.  Using this partition, we write
\[
P_{\alpha}(\sigma) = \sum_{\text{tuples}} P_{\alpha}^{(C, F_{\text{left}}, F_{\text{right}}, I_{\text{left}}, I_{\text{right}})}
\]
where $P_{\alpha}^{(C, F_{\text{left}}, F_{\text{right}}, I_{\text{left}}, I_{\text{right}})}$ denotes the sum of all terms in $P_{\alpha}(\sigma)$ which determine the tuple $(C, F_{\text{left}}, F_{\text{right}}, I_{\text{left}}, I_{\text{right}})$ in the above process, and the sum is over all tuples which correspond to some summand of $P_{\alpha}(\sigma)$.  

We now show that each summand $P_{\alpha}^{(C, F_{\text{left}}, F_{\text{right}}, I_{\text{left}}, I_{\text{right}})}$ above factors in a predictable way.  
Respectively enumerate the elements of $F_{\text{left}}$, $F_{\text{right}}$, $I_{\text{left}}$, and $I_{\text{right}}$ in increasing order as
\begin{align*}
F_{\text{left}} &= \{\underline{f}_{1}, \ldots, \underline{f}_{p}, \underline{f}_{p+1}\}, \\
F_{\text{right}} &= \{\overline{f}_{1}, \overline{f}_{2}, \ldots, \overline{f}_{q+1} \}, \\
I_{\text{left}} &= \{\underline{i}_{1}, \ldots, \underline{i}_{p}\},\;\text{and} \\ 
I_{\text{right}} &= \{\overline{i}_{1}, \ldots, \overline{i}_{q}\}.
\end{align*}  
In order for these sets to come from an actual summand of $P_{\alpha}(\sigma)$, it must be the case that
\[
1 = \underline{f}_{1} < \cdots < \underline{f}_{p} < \underline{f}_{p + 1} < \overline{f}_{1} < \overline{f}_{2} < \cdots < \overline{f}_{q+1} = k + 1,
\]
and 
\[
1\le \underline{i}_{1} < \cdots \underline{i}_{p} < \min(C) \le \max(C) < \overline{i}_{1} < \cdots < \overline{i}_{q} \le n.
\]  
In fact, each summand of $P_{\alpha}(\sigma)$ which appears in $P_{\alpha}^{(C, F_{\text{left}}, F_{\text{right}}, I_{\text{left}}, I_{\text{right}})}$ corresponds to a composition $\beta'$ which is refined by 
\[
\alpha' = (\alpha_{\underline{f}_{p + 1}}, \ldots, \alpha_{\overline{f}_{1} -1})
\]
and to a set of indices $1 \le i_{1} < \cdots < i_{\ell'} \le n$ contained in $C$, where $\ell'$ is the length of $\beta'$.  This correspondence is one-to-one, and we have
\begin{align*}
P_{\alpha}^{(C, F_{\text{left}}, F_{\text{right}}, I_{\text{left}}, I_{\text{right}})} =& \left( 
\prod_{j = 1}^{p}
\Big(
(\sigma_{\underline{i}_{j}}^{\alpha_{\underline{f}_{j}}} - i_{j}^{\alpha_{\underline{f}_{j}}}) 
\prod_{s = \underline{f}_{j} + 1}^{\underline{f}_{j+1} - 1} (- \underline{i}_{j})^{\alpha_{s}}
\Big)  
\right) \\ 
& \hspace{2em} \left( 
\prod_{j = 1}^{q}
\Big(
(\sigma_{\overline{i}_{j}}^{\alpha_{\overline{f}_{j}}} - \overline{i}_{j}^{\alpha_{\overline{f}_{j}}}) 
\prod_{s = \overline{f}_{j} + 1}^{\overline{f}_{j+1} - 1} (- \overline{i}_{j})^{\alpha_{s}}
\Big) 
\right) \\
& \hspace{4em} \left(
\sum_{\beta' \succeq \alpha'} 
\hspace{0.6em}
\sum_{\substack{ 1 \le i_{1} < i_{2} < \cdots < i_{\ell'} \le n\\ \text{$i_{j} \in C$ for all $j \in [\ell']$} }} 
\hspace{0.6em}
\prod_{j = 1}^{\ell'}
\Big(
(\sigma_{i_{j}}^{\alpha_{f_{j}}} - i_{j}^{\alpha_{f_{j}}}) 
\prod_{s = f_{j} + 1}^{f_{j+1} - 1} (- i_{j})^{\alpha_{s}}
\right)
\end{align*}
where the numbers $f_{j}$ appearing in the final factor above correspond the the refinement relation $\beta' \succeq \alpha'$.  

In conclusion, let $\tau$ be the permutation defined by $\tau_{i} = \sigma_{i}$ for $i \in C$ and $\tau_{i} = i$ for all other $i \in [n]$.  
Then $\tau \in \QSV_{n}$ and $\tau$ has a single cycle of length more than $1$.  
Furthermore, the final factor of $P_{\alpha}^{(C, F_{\text{left}}, F_{\text{right}}, I_{\text{left}}, I_{\text{right}})}$ above is exactly the expression for $P_{\alpha'}(\tau)$ given in Equation~\eqref{eq:Patsigma1}.  
Lemma~\ref{lem:onecyclevanishing} states that $P_{\alpha'}(\tau) = 0$, so this completes the proof.
\end{proof}

\subsection{Gr\"{o}bner basis theory and top-degree homogeneous ideals}
\label{sec:Grobner}

In this section we use Gr\"{o}bner basis theory to show that for all ideals $I \subseteq R_{n}$, there is a vector space isomorphism
\[
R_{n} / I \cong R_{n}/\mathsf{gr}(I).
\]
This result and the others in this section are not novel and can be found in many standard texts (for example, see~\cite{CLO}).
However, we include a streamlined account here so that our proof of Theorem~\ref{thm:vanishing} is more readily accessible to a wide audience.  

For any $f \in R_{n}$, let $\mathsf{lt}(f)$ denote the leading term of $f$ with respect to the graded lexicographic term order.  For an ideal $I \subseteq R_{n}$, define the \emph{initial ideal} of $I$ to be 
\[
\mathsf{in}(I) = \langle \mathsf{lt}(f) \;|\; f \in I \rangle.
\]

\begin{prop}
\label{prop:grin}
Let $I$ be an ideal of $R_{n}$.  Then $\mathsf{in}(\mathsf{gr}(I)) = \mathsf{in}(I)$.
\end{prop}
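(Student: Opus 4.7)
The plan is to prove both inclusions separately, relying on one central observation about the graded lexicographic order: since graded lex first compares polynomials by total degree, the leading term of any polynomial $f$ lies in its top-degree homogeneous component, so
\[
\mathsf{lt}(f) = \mathsf{lt}(\mathsf{h}(f))
\]
for every nonzero $f \in R_n$. This identity is the bridge between the ``top-degree component'' and ``leading term'' operations, and it drives both containments.

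For the inclusion $\mathsf{in}(I) \subseteq \mathsf{in}(\mathsf{gr}(I))$, I would argue directly: for any $f \in I$, the element $\mathsf{h}(f)$ lies in $\mathsf{gr}(I)$ by definition, and the boxed identity gives $\mathsf{lt}(f) = \mathsf{lt}(\mathsf{h}(f)) \in \mathsf{in}(\mathsf{gr}(I))$. Since $\mathsf{in}(I)$ is generated by these leading terms, the inclusion follows.

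For the reverse inclusion, the key is that $\mathsf{gr}(I)$ is a homogeneous ideal (it is generated by the homogeneous elements $\mathsf{h}(f)$), so its initial ideal is generated by leading terms of \emph{homogeneous} elements of $\mathsf{gr}(I)$. Given such a homogeneous $g \in \mathsf{gr}(I)$ of degree $d$, I would write
\[
g = \sum_{i} p_{i}\, \mathsf{h}(f_{i}), \qquad f_{i} \in I,
\]
where, without loss of generality, each $p_{i}$ is homogeneous of degree $d - \deg \mathsf{h}(f_{i})$. Then I would lift this expression to $g' = \sum_{i} p_{i} f_{i} \in I$. A degree count shows $\deg(g') \le d$, and the degree-$d$ component of $g'$ is exactly $\sum_{i} p_{i}\,\mathsf{h}(f_{i}) = g$; hence $\mathsf{h}(g') = g$ (assuming $g \neq 0$). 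Applying the identity once more gives $\mathsf{lt}(g) = \mathsf{lt}(\mathsf{h}(g')) = \mathsf{lt}(g') \in \mathsf{in}(I)$, which completes the containment.

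There is no real obstacle here; the only point that requires care is the verification that $\mathsf{h}(g') = g$, which must use the fact that the different summands $p_{i} f_{i}$ all have top degree exactly $d$ so that their top-degree components add up without cancellation canceling $g$ itself. This is automatic from the homogeneity choice of the $p_{i}$ and the assumption $g \neq 0$.
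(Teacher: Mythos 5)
Your proof is correct and follows essentially the same approach as the paper: both directions hinge on the identity $\mathsf{lt}(f)=\mathsf{lt}(\mathsf{h}(f))$, and the reverse inclusion is handled by lifting a homogeneous element of $\mathsf{gr}(I)$ back to an element of $I$ whose top-degree component recovers it. The only cosmetic difference is that the paper first observes $\mathsf{gr}(I)=\QQ\spanning\{\mathsf{h}(f)\mid f\in I\}$ (because the $\QQ$-span of the $\mathsf{h}(f)$ is already closed under multiplication by monomials), which lets it work with scalar linear combinations and a common degree, whereas you keep homogeneous polynomial coefficients $p_i$; the underlying computation that $\mathsf{h}\big(\sum p_i f_i\big)=\sum p_i\,\mathsf{h}(f_i)$ is the same in both.
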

\begin{proof}
It is straightforward to verify that for any $f \in I$ we have $\mathsf{lt}(f) = \mathsf{lt}(\mathsf{h}(f))$, and therefore $\mathsf{in}(\mathsf{gr}(I)) \supseteq \mathsf{in}(I)$. To show the other direction of containment, note that
\[
\mathsf{gr}(I) = \QQ\spanning\{\mathsf{h}(f) \;|\; f \in I\}.
\]
Now fix a monomial $x_{1}^{a_{1}} x_{2}^{a_{2}}\cdots x_{n}^{a_{n}} \in \mathsf{in}(\mathsf{gr}(I))$, so that
\[
x_{1}^{a_{1}}  x_{2}^{a_{2}} \cdots x_{n}^{a_{n}} = \mathsf{lt}\left(\sum_{i =1}^{k} c_{i} \mathsf{h}(f_{i}) \right)
\]
for some $f_{1}, \ldots, f_{k} \in I$ and $c_{1}, \ldots, c_{n} \in \QQ$.  As our term order respects degree, we assume without loss of generality that each $f_{i}$ has degree $a_{1} + a_{2} + \cdots + a_{n}$.  Therefore 
\[
\left(\sum_{i =1}^{k} c_{i} \mathsf{h}(f_{i}) \right) = \mathsf{h}\left(\sum_{i =1}^{k} c_{i} f_{i} \right),
\]
and together with our previous remarks this completes the proof.
\end{proof}

The desired isomorphism follows from a sequence of isomorphisms $R_{n}/\mathsf{gr}(I) \cong R_{n}/\mathsf{in}(I) \cong R_{n}/I$ which are guaranteed to exist by Proposition~\ref{prop:grin} the next result, which is a classical statement in Gro\"{o}bner basis theory.  

\begin{prop}
\label{prop:Gbasis}
For all ideals $I$ of $R_{n}$, there is a linear isomorphism $R_{n}/I \cong R_{n} / \mathsf{in}(I)$.  In particular, the monomials in $R_{n} \setminus \mathsf{in}(I)$ descend to a linear basis of both quotients.
\end{prop}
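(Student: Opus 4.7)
The plan is to establish a common monomial basis for both quotients $R_n/I$ and $R_n/\mathsf{in}(I)$, namely the set $\mathcal{B} = \{x^\alpha \;|\; x^\alpha \notin \mathsf{in}(I)\}$ of monomials not lying in the initial ideal. The isomorphism will then follow by identifying the two quotients with the $\QQ$-span of $\mathcal{B}$.

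First I would handle the easier half: $\mathcal{B}$ descends to a basis of $R_n/\mathsf{in}(I)$. Since $\mathsf{in}(I)$ is a monomial ideal, it is spanned as a $\QQ$-vector space by the monomials it contains, so $R_n = \mathsf{in}(I) \oplus \QQ\spanning(\mathcal{B})$ as $\QQ$-vector spaces. This gives both spanning and linear independence of $\mathcal{B}$ in $R_n/\mathsf{in}(I)$ immediately.

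Next I would fix a Gr\"obner basis $G = \{g_1, \ldots, g_s\}$ of $I$ with respect to the graded lexicographic term order, meaning $\mathsf{in}(I) = \langle \mathsf{lt}(g_1), \ldots, \mathsf{lt}(g_s)\rangle$. For spanning, I would invoke the multivariate division algorithm: for any $f \in R_n$ there exist $q_1, \ldots, q_s, r \in R_n$ with $f = \sum_i q_i g_i + r$ such that no term of $r$ is divisible by any $\mathsf{lt}(g_i)$, equivalently every term of $r$ lies in $\mathcal{B}$. Since $\sum_i q_i g_i \in I$, this shows $f \equiv r \pmod{I}$, so $\mathcal{B}$ spans $R_n/I$. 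For linear independence, I would argue by contradiction: suppose $\sum_j c_j x^{\alpha_j} \in I$ is a nonzero $\QQ$-linear combination of elements of $\mathcal{B}$. Then its leading term, which is a scalar multiple of some $x^{\alpha_j}$, would lie in $\mathsf{in}(I)$, contradicting $x^{\alpha_j} \in \mathcal{B}$. Therefore $\mathcal{B}$ is linearly independent in $R_n/I$.

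Having shown $\mathcal{B}$ is a basis of both quotients, the linear isomorphism $R_n/I \cong R_n/\mathsf{in}(I)$ is given by the unique $\QQ$-linear map sending each class $[x^\alpha]_I$ to $[x^\alpha]_{\mathsf{in}(I)}$ for $x^\alpha \in \mathcal{B}$. The main (and only) technical obstacle is the invocation of the existence of a Gr\"obner basis and the division algorithm, but these are standard and readily cited from \cite{CLO}. Combined with Proposition~\ref{prop:grin}, which yields $\mathsf{in}(\mathsf{gr}(I)) = \mathsf{in}(I)$, the proposition gives the chain of isomorphisms $R_n/I \cong R_n/\mathsf{in}(I) = R_n/\mathsf{in}(\mathsf{gr}(I)) \cong R_n/\mathsf{gr}(I)$ used in the proof of Theorem~\ref{thm:vanishingQSV}.
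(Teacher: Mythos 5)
Your proof is correct and follows essentially the same path as the paper's: identify the monomials outside $\mathsf{in}(I)$ as a common basis, handle $R_n/\mathsf{in}(I)$ by the monomial-ideal decomposition, and handle $R_n/I$ via the existence of a Gr\"obner basis together with the division algorithm for spanning and a leading-term argument for independence. The only cosmetic difference is that the paper justifies the $R_n/\mathsf{in}(I)$ step by observing $\mathsf{in}(I) = \QQ\spanning\{\mathsf{lt}(f) \;|\; f \in I\}$, whereas you phrase it as $\mathsf{in}(I)$ being a monomial ideal; these are equivalent.
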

\begin{proof}
Write $M$ for the set of monomials in $R_{n}$ which are note contained in $\mathsf{in}(I)$.  To see that $M$ descends to a basis of $R_{n} / \mathsf{in}(I)$, note that the set of all monomials is a basis of $R_{n}$ and 
\[
\mathsf{in}(I) = \QQ\spanning\{\mathsf{lt}(f) \;|\; f \in I\}.
\]

To show the analogous statement $R_{n}/I$, we first remark the existence of a Gr\"{o}bner basis of $I$: a generating set $g_{1}, \ldots, g_{k}$ of $I$ for which $\langle \mathsf{lt}(g_{1}), \mathsf{lt}(g_{2}), \ldots, \mathsf{lt}(g_{k}) \rangle = \mathsf{lt}(I)$.
For any $f \in R_{n}$, applying the division algorithm with this Gr\"{o}bner basis will produce a representative of $f$ (mod $I$) which is a linear combination of the elements of $M$, so we have a spanning set.  
Moreover, $M$ (mod $I$) is linearly independent: the leading term of any nonzero linear combination from $M$ will belong to $M$, so any such linear combination cannot be contained in $I$.  
Therefore $M$ descends to a basis of the quotient $R/I$.
\end{proof}

\begin{rem}
Using the method of Proposition~\ref{prop:Gbasis},~\cite[Theorem 4.1]{ABB} gives an explicit basis for the quotient $R_{n}/\langle \QSym_{n}^{+}\rangle$, and therefore also for $R_{n}/\mathbf{I}(\QSV_{n})$.
\end{rem}

\end{document}